\title[]{Aggregation-diffusion energies on Cartan-Hadamard manifolds of unbounded curvature}
\author[Fetecau]{Razvan C. Fetecau}
\address[Razvan C. Fetecau]{\newline Department of Mathematics, Simon Fraser University, 8888 University Dr., Burnaby, BC V5A 1S6, Canada}
\email{van@math.sfu.ca}
\author[Park]{Hansol Park}
\address[Hansol Park]{\newline Department of Mathematics, Simon Fraser University, 8888 University Dr., Burnaby, BC V5A 1S6, Canada}
\email{hansol\_park@sfu.ca}
\newtheorem{theorem}{Theorem}[section]
\newtheorem{lemma}{Lemma}[section]
\newtheorem{corollary}{Corollary}[section]
\newtheorem{proposition}{Proposition}[section]
\newtheorem{remark}{Remark}[section]
\newtheorem{example}{Example}[section]
\newcommand{\bbr}{\mathbb R}
\newcommand{\bbs}{\mathbb S}
\newcommand{\bbh}{\mathbb H}
\newcommand{\calA}{\mathcal{A}}
\newcommand{\calC}{\mathcal{C}}
\newcommand{\calP}{\mathcal{P}}
\newcommand{\calR}{\mathcal{R}}
\newcommand{\calK}{\mathcal{K}}
\newcommand{\calW}{\mathcal{W}}
\newcommand{\calJ}{\mathcal{J}}
\def\d{\mathrm{d}}
\newcommand{\dm}{n} 
\newcommand{\p}{o}
\newcommand{\h}{h}
\newcommand{\dist}{d}
\newcommand{\secc}{\mathcal{K}}
\begin{document}

\date{\today}

\subjclass[2020]{35A30, 35B38, 53C21, 58J90} 
\keywords{Cartan-Hadamard manifolds, global minimizers, diffusion on manifolds, logarithmic HLS inequality, comparison theorems}


\begin{abstract} 
We consider an aggregation-diffusion energy on Cartan-Hadamard manifolds with sectional curvatures that can grow unbounded at infinity. The energy corresponds to a macroscopic aggregation model that involves nonlocal interactions and linear diffusion. We establish necessary and sufficient conditions on the growth at infinity of the attractive interaction potential for ground states to exist. Specifically, we derive explicit conditions on the attractive potential in terms of the bounds on the sectional curvatures at infinity.  To prove our results we establish a new logarithmic Hardy-Littlewood inequality for Cartan-Hadamard manifolds of unbounded curvature.
\end{abstract}

\maketitle \centerline{\date}

\section{Introduction}
\label{sect:intro}

In this paper we investigate the existence of ground states of the {\em free energy}
\begin{equation}
\label{eqn:energy-s}
E[\rho]=\int_M \rho(x)\log\rho(x)\d x+\frac{1}{2}\iint_{M \times M} W(x, y)\rho(x)\rho(y)\d x \d y,
\end{equation}
defined for probability measures on $M$, where $M$ is a general Cartan-Hadamard manifold of finite dimension, and $W: M \times M \to \bbr$ is an interaction potential -- see \eqref{eqn:energy} for the precise definition. Here, the integration is with respect to the Riemannian volume measure $\d x$.

In terms of dynamical evolution, the gradient flow of the energy $E$ on a suitable Wasserstein space results in the following nonlinear nonlocal evolution equation \cite{AGS2005}:
\begin{equation}
\label{eqn:model}
\partial_t\rho(x)- \nabla_M \cdot(\rho(x)\nabla_M W*\rho(x))=\Delta \rho(x),
\end{equation}
where
\[
W*\rho(x)=\int_M W(x, y)\rho(y) \d y,
\]
and $\nabla_M \cdot$ and $\nabla_M $ represent the manifold divergence and gradient, respectively. Consequently, critical points of the energy $E$ correspond to steady states of \eqref{eqn:model}.

In applications, equation \eqref{eqn:model} has been used to model a wide range of self-organizing phenomena in biology, physics, engineering and social sciences, such as swarming or flocking of biological organisms \cite{CarrilloVecil2010, M&K}, emergent behaviour in robotic swarms \cite{Gazi:Passino}, and opinion formation \cite{MotschTadmor2014}.  When $W$ is the Newtonian potential in $\bbr^2$ or $\bbr^3$, \eqref{eqn:model} reduces to some classical models in mathematical biology and gravitational physics, namely the Patlak-Keller-Segel model of chemotaxis \cite{KellerSegel1971} and the Smoluchowski-Poisson equation \cite{ChLaLe2004}.

The energy \eqref{eqn:energy-s} consists of an entropy (or internal energy) and an interaction energy modelled by the potential $W$. In the evolution equation \eqref{eqn:model}, these two components are reflected in the linear diffusion and the nonlocal transport term, respectively. Of particular interest is the case when $W$ is a purely {\em attractive} potential, and any two points experience a pairwise attractive interaction. In such case, there is a competing effect between the nonlocal attraction and the local diffusion, as the first leads to aggregation/blow-up and the latter results in spreading. This paper investigates the subtle balance of these opposite effects that leads to ground states of the energy functional set up on general Cartan-Hadamard manifolds.

Linear diffusion on negatively-curved manifolds of unbounded curvature is a delicate subject. The Brownian motion on Cartan-Hadamard manifolds with sectional curvatures that grow too fast at infinity, is no longer stochastically complete, i.e., the process has finite lifetime \cite[Section 15]{Grigoryan1999}. On such manifolds, the high negative curvature can sweep a Brownian particle to infinity in finite time. Stochastic incompleteness of the Brownian motion relates to the well-posedness of the Cauchy problem for the linear diffusion equation (in this case, the heat kernel no longer integrates to $1$ for all $t>0$) \cite{Grigoryan1999}. In the present work we do not address well-posedness issues for the evolution equation \eqref{eqn:model}, but only consider the free energy \eqref{eqn:energy-s} and investigate the existence of its ground states. In our study we place no restriction on the sectional curvatures, which are allowed to grow unbounded at infinity at any rate.

There is extensive literature on the free energy \eqref{eqn:energy-s} and the aggregation-diffusion equation \eqref{eqn:model} posed on $M = \bbr^\dm$. The well-posedness and the asymptotic behaviour of solutions to equation \eqref{eqn:model} on $\bbr^\dm$, along with qualitative studies on its steady states, have been investigated by numerous authors, we refer to the recent review article \cite{CarrilloCraigYao2019} for a comprehensive discussion on such issues. In \cite{CaHiVoYa2019} the authors show that the stationary solutions of \eqref{eqn:model} are radially decreasing up to translation. The equilibration toward the heat kernel was investigated in  \cite{CaGoYaZe}, and the effect of the boundaries was studied in \cite{MeFe2020}. Most relevant to the present work is the work of Carrillo {\em et al.} \cite{CaDePa2019}, where the authors establish necessary and sufficient conditions on the interaction potential and diffusion, for global energy minimizers to exist. We also note that there has been separate interest in the existence and characterization of minimizers of the energy \eqref{eqn:energy-s} without diffusion \cite{Balague_etalARMA, CaCaPa2015, ChFeTo2015, SiSlTo2015} or with nonlinear instead of linear diffusion 
\cite{BurgerDiFrancescoFranek,BuFeHu14,CaHoMaVo2018,CaHiVoYa2019,DelgadinoXukaiYao2022,Kaib17}. 

Although the free energy \eqref{eqn:energy-s} and the evolution equation \eqref{eqn:model} have been extensively studied in the Euclidean space, there is very little done for the manifold setup. In this paper we are exclusively interested in the set up of the aggregation-diffusion model on Cartan-Hadamard manifolds. Also, we assume that the interaction potential $W(x,y)$ depends only on the geodesic distance $\dist(x,y)$ between the points $x$ and $y$. This assumption follows the {\em intrinsic} approach introduced in \cite{FeZh2019}, and considered in various subsequent works on the plain interaction equation (no diffusion) on Riemannian manifolds \cite{FeHaPa2021,FePa2023a, FePa2023b,FePa2021}. As examples where the manifold framework and the intrinsic approach are particularly relevant, we mention applications to robotics and biology, where limitations in environment, the topography,  or mobility constraints restrict the agents to evolve on a certain configuration manifold.


The present work addresses and answers the following problem. Consider a Cartan-Hadamard manifold $M$ and assume that the sectional curvatures at a generic point $x$ are bounded below and above, respectively, by two negative functions $-c_m(\theta_x)$ and $-c_M(\theta_x)$, where $\theta_x$ denotes the distance from $x$ to a fixed (but arbitrary) pole on $M$. The functions $c_m(\cdot)$ and $c_M(\cdot)$ are allowed to grow at any rate at infinity, e.g., they can grow algebraically or exponentially.  To contain the diffusion on a manifold with sectional curvatures that can possibly grow unbounded at infinity, the attractive interaction potential needs to grow correspondingly at large distances. We are then interested to establish necessary and sufficient conditions on the behaviour of the interaction potential $W$ at infinity that prevents the spreading and results in existence of ground states of the energy functional. 

The problem posed above was studied in \cite{CaDePa2019} for $M = \bbr^\dm$ and in \cite{FePa2024} for Cartan-Hadamard manifolds with constant curvature bounds (the case when $c_m(\cdot)$ and $c_M(\cdot)$ are constant functions). In the Euclidean case, it was found that a sharp condition for the existence of energy minimizers is that the attractive potential grows at least {\em logarithmically} at infinity.  Similarly, for Cartan-Hadamard manifolds with sectional curvatures bounded below by a negative constant, spreading by diffusion is prevented provided $W$ grows at least {\em superlinearly}. In the present work, as we allow the sectional curvatures to grow unbounded at infinity, an even stronger growth of the attractive potential is needed to contain the diffusion. We quantify precisely such conditions on $W$ in terms of the upper and lower bounds of the sectional curvatures. 
  
To establish the results of this paper, we derive several tools that have an interest in their own. The first is a volume comparison theorem in Riemannian geometry for manifolds with non-constant bounds of the sectional curvatures. We believe that this comparison theorem is known to experts, but we could not find a reference for it. The second tool is a new logarithmic Hardy-Littlewood-Sobolev (HLS) inequality on Cartan-Hadamard manifolds of unbounded curvature. Compared to the HLS inequality on $\bbr^\dm$ \cite{CaDePa2019, CarlenLoss1992}, the logarithmic HLS inequality on Cartan-Hadamard manifolds includes an additional term that contains the Jacobian of the exponential map at a fixed pole. This additional term is a manifestation of the curvature of the manifold, as the Jacobian of the exponential map determines the volume growth of geodesic balls on the manifold. 

We also note that there has been very recent interest on the well-posedness and long-time behaviour for PDE's on Cartan-Hadamard manifolds with either linear or nonlinear diffusion and {\em local} reaction terms \cite{ GrilloMeglioliPunzo2021a, GrilloMeglioliPunzo2021b, GrilloMeglioliPunzo2023,GrilloMuratoriVazquez2017, GrilloMuratoriVazquez2019}. In particular, in \cite{GrilloMeglioliPunzo2023} the authors consider linear diffusion with a reaction term, and study the dichotomy global existence versus blow-up. Stochastic completeness of the manifold is a key assumption in their work, which limits the growth at infinity of the sectional curvatures to be at most quadratic. As pointed out above, we do not need to make such restrictive assumptions in our work, as our approach is entirely variational.

The summary of the paper is as follows. In Section \ref{sect:prelim} we present the assumptions, notations and some necessary background on comparison theorems in Riemannian geometry. In Section \ref{sec:non-existence} we establish necessary conditions on the growth of the interaction potential for ground states to exist - the main result is Theorem \ref{thm:nonexistence-gen}. In Section \ref{sec:HLS} we present a new logarithmic HLS inequality on Cartan-Hadamard manifolds with non-constant lower bound of curvatures. Section \ref{sect:nonconst} presents the proof of Theorem \ref{thm:exist-gen}, which establishes sufficient conditions on the interaction potential that guarantee existence of global energy minimizers.


\section{Assumptions and background}
\label{sect:prelim}
 
In this section we present the assumptions and set up the notations. We also present several comparison results in Riemannian geometry which will be used in our analysis. 

\subsection{Assumptions and notations}
\label{subsect:notations}
Throughout the paper we will make the following assumptions on the manifold $M$ and the interaction potential $W$.
\smallskip

\noindent(\textbf{M}) $M$ is an $\dm$-dimensional Cartan-Hadamard manifold, i.e., $M$ is complete, simply connected, and has everywhere non-positive sectional curvature. We note that as a Cartan-Hadamard manifold, $M$ has no conjugate points, it is diffeomorphic to $\bbr^\dm$ and the exponential map at any point is a global diffeomorphism. 
\smallskip

\noindent(\textbf{W}) The interaction potential $W:M\times M\to\bbr$ has the form
\[
W(x, y)=\h(\dist(x, y)),\qquad \text{ for all }x, y\in M,
\]
where $\h:[0, \infty)\to [0,\infty)$ is lower semi-continuous and non-decreasing. The fact that $h$ is non-decreasing indicates that the interaction potential is purely {\em attractive}. Also note that $h$ is bounded below, so it cannot have an infinite singularity at the origin.

We denote by $\dist$ the intrinsic distance on $M$. For $x \in M$, and $\sigma$ a two-dimensional subspace of $T_x M$, $\calK(x;\sigma)$ denotes the sectional curvature of $\sigma$ at $x$. We also denote by $\d x$ the Riemannian volume measure on $M$ and by $\calP_{ac}(M) \subset \calP(M)$ the space of probability measures on $M$ that are absolutely continuous with respect to $\d x$. We will refer to an absolutely continuous measure directly by its density $\rho$, and write $\rho \in \calP_{ac}(M)$ to mean $\d \rho (x) = \rho(x) \d x \in \calP_{ac}(M)$.

The precise definition of the free energy introduced in \eqref{eqn:energy-s} is the following. We define $E: \calP(M) \to [-\infty, \infty]$ by
\begin{equation}
\label{eqn:energy}
E[\rho]=\begin{cases}
\displaystyle \int_M \rho(x)\log\rho(x) \d x+\frac{1}{2}\iint_{M \times M}  h(\dist(x, y))\rho(x)\rho(y) \d x\d y, & \text{ if  }\rho\in \mathcal{P}_{ac}(M),\\[2pt]
\displaystyle\inf_{\substack{\rho_k \in \calP_{ac}(M) \\ \rho_k \rightharpoonup \rho}} \liminf_{k\to \infty} E[\rho_k], & \text{ otherwise},
\end{cases}
\end{equation}
where $\rho_k \rightharpoonup \rho$ denotes weak convergence as measures, i.e.,
\[
\int_M f(x) \rho_k(x) \d x \to \int_M f(x) \rho(x) \d x, \qquad \text{ as } k\to  \infty,
\]
for all bounded and continuous functions $f:M\to \bbr$.

Denote by $\calP_1(M)$ the set of probability measures on $M$ with finite first moment, i.e.,
\[
\calP_1(M) = \left \{\mu \in \calP(M) : \int_M \dist(x,x_0) \d \mu(x) < \infty \right \},
\]
for some fixed (but arbitrary) point $x_0 \in M$. For $\rho,\sigma \in \calP(M)$, the \emph{intrinsic} $1$-Wasserstein distance is defined as
\begin{equation*}
	\calW_1(\rho,\sigma) = \inf_{\gamma \in \Pi(\rho,\sigma)} \iint_{M \times M} \dist(x,y) \d\gamma(x,y),
\end{equation*}
where $\Pi(\rho,\sigma) \subset \calP(M\times M)$ is the set of transport plans between $\rho$ and $\sigma$, i.e., the set of elements in $\calP(M\times M)$ with first and second marginals $\rho$ and $\sigma$, respectively. The space $(\calP_1(M),\calW_1)$ is a metric space. 

Finally, for a fixed arbitrary point $\p \in M$ (referred throughout the paper as {\em pole}), we denote
\[
\mathcal{P}_\p(M):=\left\{\rho\in\mathcal{P}_{ac}(M) \cap \mathcal{P}_1(M): \int_M \log_\p x\rho(x)\d x=0\right\},
\]
where $\log_\p$ denotes the Riemannian logarithm map at $\p$. This is the set of admissible densities, in which we will establish existence of global energy minimizers.



\subsection{Rauch comparison theorem}
\label{subsect:Rauch}
A tool we use in our proofs is Rauch's comparison theorem \cite[Chapter 10, Proposition 2.5]{doCarmo1992}. Specifically, we use this result to compare lengths of curves on a general Cartan-Hadamard manifold with lengths of curves on spaces of constant curvature.

\begin{theorem}[Rauch comparison theorem]\label{RCT}
Let $M$ and $\tilde{M}$ be Riemannian manifolds and suppose that for all $p\in M$, $\tilde{p}\in \tilde{M}$, and $\sigma\subset T_pM$, $\tilde{\sigma}\subset T_{\tilde{p}}\tilde{M}$, the sectional curvatures $\secc$ and $\tilde{\secc}$ of $M$ and $\tilde{M}$, respectively, satisfy
\[
\tilde{\secc} (\tilde{p};\tilde{\sigma})\geq \secc(p;\sigma).
\]
Let $p\in M$, $\tilde{p}\in\tilde{M}$ and fix a linear isometry $i:T_pM\to T_{\tilde{p}}\tilde{M}$. Let $r>0$ be such that the restriction ${\exp_p}_{|B_r(0)}$ is a diffeomorphism and ${\exp_{\tilde{p}}}_{|\tilde{B}_r(0)}$ is non-singular. Let $c:[0, a]\to\exp_p(B_r(0))\subset M$ be a differentiable curve and define a curve $\tilde{c}:[0, a]\to\exp_{\tilde{p}}(\tilde{B}_r(0))\subset\tilde{M}$ by
\[
\tilde{c}(s)=\exp_{\tilde{p}}\circ i\circ\exp^{-1}_p(c(s)),\qquad s\in[0, a].
\]  
Then the length of $c$ is greater or equal than the length of $\tilde{c}$.
\end{theorem}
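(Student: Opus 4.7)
The plan is to prove the pointwise inequality $|c'(s)|\ge |\tilde c'(s)|$ for every $s\in[0,a]$; integrating over $[0,a]$ then yields the length comparison. Setting $v(s):=\exp_p^{-1}(c(s))$ and $\tilde v(s):=i(v(s))=\exp_{\tilde p}^{-1}(\tilde c(s))$, the chain rule gives
\[
c'(s)= d\exp_p\bigl|_{v(s)}(v'(s)),\qquad \tilde c'(s)= d\exp_{\tilde p}\bigl|_{\tilde v(s)}(\tilde v'(s)).
\]
Because $i$ is a linear isometry, $|v(s)|=|\tilde v(s)|$ and $|v'(s)|=|\tilde v'(s)|$, and the orthogonal decompositions $v'=v'_r+v'_\perp$, $\tilde v'=\tilde v'_r+\tilde v'_\perp$ into radial (parallel to $v$) and transverse parts have matching norms. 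The reduction thus boils down to showing that $d\exp_{\tilde p}$ is not longer than $d\exp_p$ on corresponding transverse vectors.

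By the Gauss lemma, the differential of the exponential map acts isometrically on radial directions and sends them orthogonally to the image of the perpendicular directions, so the radial contributions to $|c'(s)|^2$ and $|\tilde c'(s)|^2$ are equal. What remains is the inequality
\[
\bigl|d\exp_p\bigl|_{v(s)}(v'_\perp(s))\bigr| \;\ge\; \bigl|d\exp_{\tilde p}\bigl|_{\tilde v(s)}(\tilde v'_\perp(s))\bigr|.
\]
This is a statement about Jacobi fields: for a transverse vector $w\perp v$, $d\exp_p|_v(w)$ equals $J(1)$, where $J$ is the (normal) Jacobi field along the radial geodesic $t\mapsto\exp_p(tv)$ with $J(0)=0$ and $(\nabla_t J)(0)=w$, and analogously on $\tilde M$. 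The theorem therefore reduces to the \emph{Jacobi field comparison}: under $\tilde\secc\ge\secc$, if $J$ and $\tilde J$ are normal Jacobi fields along unit-speed geodesics in $M$ and $\tilde M$ with matching initial data ($J(0)=\tilde J(0)=0$ and $|(\nabla_t J)(0)|=|(\nabla_t\tilde J)(0)|$), then $|J(t)|\ge|\tilde J(t)|$ on the interval where neither vanishes past $0$.

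The main obstacle is this Jacobi field inequality. I would prove it by a Sturm--Picone style argument on the scalar functions $u(t):=|J(t)|$ and $\tilde u(t):=|\tilde J(t)|$. Using $J''+R(J,\gamma')\gamma'=0$ together with $\langle J,\gamma'\rangle\equiv 0$, a direct computation yields
\[
u(t)u''(t)+|J'(t)|^2\,|J(t)|^2 \,-\, \bigl|\langle J'(t),J(t)\rangle\bigr|^2 \cdot u(t)^{-2} \;=\; -\,\secc(t)\,u(t)^2,
\]
and after a Cauchy--Schwarz bound this reads $u''\le -\secc(t)\,u$, with equality (and no Cauchy--Schwarz slack) in $\tilde M$ if $\tilde J$ is chosen parallel; one then compares $(\tilde u\,u'-u\,\tilde u')'$ and uses the initial conditions $u(0)=\tilde u(0)=0$, $u'(0)=\tilde u'(0)$ to conclude that this Wronskian is nonnegative on $[0,r]$, which gives $u/\tilde u$ nondecreasing and hence $u\ge\tilde u$. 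The hypothesis that $\exp_{\tilde p}|_{\tilde B_r(0)}$ is nonsingular is precisely what guarantees that $\tilde u$ does not vanish on $(0,r]$, so no division by zero occurs. Combined with the Gauss-lemma contribution for the radial part, this gives $|c'(s)|\ge|\tilde c'(s)|$ pointwise, and integrating over $s\in[0,a]$ completes the proof.
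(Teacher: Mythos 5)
Your reduction is the standard one (and the one behind the cited result in do Carmo): split $v'$ into radial and transverse parts, use the Gauss lemma to handle the radial contribution, and identify $d\exp_p|_{v}(w)$ for $w\perp v$ with the value of a Jacobi field vanishing at $0$. Up to that point the outline is fine. Note, however, that the paper does not prove this theorem at all — it is quoted from \cite[Chapter 10, Proposition 2.5]{doCarmo1992} — so the only thing to assess is your proof of the key Jacobi field comparison, and that is where there is a genuine gap.

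The scalar Sturm--Picone argument you propose does not deliver the inequality you need. For $u=|J|$ one computes $u\,u''=\langle J'',J\rangle+|J'|^2-\langle J',J\rangle^2/u^2$, and Cauchy--Schwarz makes the slack term \emph{nonnegative}, so the Jacobi equation gives $u''\ge -\secc(t)\,u$ wherever $u>0$ — the opposite direction from the $u''\le -\secc(t)\,u$ you assert. This inequality is exactly what yields the \emph{lower} bound $|J|\ge s_\kappa|J'(0)|$ in the low-curvature manifold $M$ (with $\kappa\ge\sup\secc$), but it cannot give the \emph{upper} bound $|\tilde J|\le s_\kappa|\tilde J'(0)|$ in the high-curvature manifold $\tilde M$, which is the heart of Rauch's theorem; there the same computation again gives $\tilde u''\ge-\tilde\secc\,\tilde u$, which is useless for bounding $\tilde u$ from above, so the Wronskian $(\tilde u\,u'-u\,\tilde u')'$ has no sign. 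Your remark that the Cauchy--Schwarz slack vanishes "if $\tilde J$ is chosen parallel" is not available: $\tilde J$ is the specific Jacobi field determined by $\tilde J(0)=0$ and $\tilde J'(0)=\tilde v'_\perp$, and in general it is not of the form $f(t)E(t)$ with $E$ parallel (that happens essentially only in constant curvature). The standard ways to close this gap are the index lemma (Jacobi fields minimize the index form among fields with the same endpoint data, before conjugate points, followed by transplantation of a parallel comparison field), or a matrix Riccati comparison for the operator $\mathcal{A}'\mathcal{A}^{-1}$ — this extra input is precisely why the "right inequality" in the paper's Appendix A needs the trace-Riccati argument rather than a pointwise scalar comparison. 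As written, your proof establishes only half of the needed two-sided Jacobi estimate, so the pointwise inequality $|c'(s)|\ge|\tilde c'(s)|$ does not follow.
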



\subsection{Bounds on the Jacobian of the exponential map.}
\label{subsect:bounds-J}
We will follow notations and the general framework from \cite{Chavel2006}.  A Jacobi field along a unit-speed geodesic $\gamma$ is a differentiable vector field $Y$ along $\gamma$ that satisfies the Jacobi equation
\begin{equation}
\label{eqn:Jacobi}
\nabla_t^2 Y + \calR(\gamma',Y) \gamma' = 0,
\end{equation}
where $\nabla$ denotes the Levi-Civita connection on $M$ and $\calR$ represents the curvature tensor. Denote by $\calJ$ the vector space of Jacobi fields along $\gamma$ and by $\calJ^\perp$ the subspace of $\calJ$ consisting of the Jacobi fields orthogonal to $\gamma$. Jacobi fields are variation fields through geodesics, and they measure the spread and the volume growth of a geodesic spray.

Consider a point $x \in M$, a unit tangent vector $u \in T_x M$, and a unit-speed geodesic $\gamma$ through $x$ in the direction $u$, i.e., $\gamma(0) = x$, $\gamma'(0) = u$. Set an orthonormal basis $\{e_1, \cdots, e_\dm\}$ of  $T_x M$, with $e_\dm = u$, and consider the Jacobi fields $J_j(t)$, $j=1,\dots, \dm-1$, along the geodesic $\gamma$ that satisfy
\[
J_j(0)=0\qquad\text{and}\qquad J_j'(0)=e_j, \qquad  j =1,\dots,\dm-1.
\]
We point out that $J_j(t) \in \calJ^\perp$ for all $t \geq 0$, $j=1,\dots, \dm-1$. Denote by $\calA(t;u)$ the matrix with columns $J_j(t)$, $j = 1,\dots,\dm-1$, i.e., 
\[
\mathcal{A}(t)=[J_1(t) \;  J_2(t) \; \dots \; J_{\dm-1}(t)].
\]
Note that the matrix $\mathcal{A}(t)$ depends on the geodesic $\gamma$, in particular on the unit tangent vector $u$. When this dependence is important and can lead to confusion, we will use the notation $\mathcal{A}(t;u)$ instead.

By the properties of the Jacobi fields, we have
\[
J_j(t)=(d\exp_x)_{te_\dm}(te_j),\qquad j =1,\dots, \dm-1,
\]
and hence we can write
\begin{equation}
\label{eqn:calA-exp}
\mathcal{A}(t)=t[(d\exp_x)_{te_\dm}(e_1) \quad (d\exp_x)_{te_\dm}(e_2) \quad  \dots \quad  (d\exp_x)_{te_\dm}(e_{\dm-1})].
\end{equation}

In the present work we are particularly interested in the Jacobian $J(\exp_x)$ of the exponential map.  Since
\[
(d\exp_x)_{te_\dm}(e_\dm)=e_\dm,
\]
by using \eqref{eqn:calA-exp} we can express
\begin{equation}
\label{eqn:Jexp-A}
J(\exp_x)(te_\dm)=\det( (d\exp_x)_{te_\dm})=\det\begin{bmatrix}
\frac{\mathcal{A}(t)}{t}&0\\[5pt]
0&1
\end{bmatrix}=\det(\mathcal{A}(t)/t)=\frac{\det\mathcal{A}(t)}{t^{\dm-1}}.
\end{equation}

Throughout the paper we fix a generic point $\p$ in $M$ which we will be referring to as the {\em pole} of the manifold. In general, a point on a manifold is called a pole if the exponential map at that point is a global diffeomorphism. On Cartan-Hadamard manifolds all points satisfy this property. We also make the notation
\[
\theta_x := \dist(o,x), \qquad \text{ for all } x \in M.
\]

The following theorem establishes estimates on the Jacobian $J(\exp_x)$. The theorem is a generalization of \cite[Theorems III.4.1 and III.4.3]{Chavel2006}, which consider the case when the sectional curvatures of $M$ are bounded above and below by constants.

\begin{theorem}(Generalizations of \cite[Theorems III.4.1 and III.4.3]{Chavel2006})
\label{lemma:Chavel-thms-gen}
Suppose the sectional curvatures of  $M$ satisfy
\begin{equation}
-c_m(\theta_x)\leq \mathcal{K}(x;\sigma)\leq -c_M(\theta_x) < 0, 
\label{eqn:var-bounds}
\end{equation}
for all $x \in M$ and all two-dimensional subspaces $\sigma \subset T_x M$, where $c_m(\cdot)$ and $c_M(\cdot)$ are positive continuous functions of the distance from the pole $\p$. Then, for any $x \in M$ and a unit-speed geodesic $\gamma$ through $x$, it holds that 
\[
\psi_M^{\dm-1}(t) \leq \det\mathcal{A}(t)\leq \psi_m^{\dm-1}(t),
\]
where $\psi_m$ and $\psi_M$ are solutions of
\begin{equation}
\begin{cases}
\psi_m''(\theta)=c_m(\theta)\psi_m(\theta),\quad\forall \theta>0, \\[5pt]
\psi_m(0)=0,\quad \psi_m'(0)=1,
\end{cases}\quad \text{ and } \quad 
\begin{cases}
\psi_M''(\theta)=c_M(\theta)\psi_M(\theta),\quad\forall \theta>0, \\[5pt]
\psi_M(0)=0,\quad \psi_M'(0)=1.
\end{cases}
\label{eqn:psimM}
\end{equation}
In addition (by \eqref{eqn:Jexp-A}), given that $\gamma$ is arbitrary,
\[
\left( \frac{\psi_M(\|u\|)}{\|u\|} \right)^{\dm-1}\leq |J(\exp_x)(u)|\leq
\left( \frac{\psi_m(\|u\|)}{\|u\|} \right)^{\dm-1},
\]
for all $u \in T_x M$.
\end{theorem}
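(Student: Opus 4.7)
The plan is to establish the two inequalities separately with complementary techniques: the lower bound on $\det\calA(t)$ follows from the upper sectional curvature bound via a pointwise Jacobi field comparison, while the upper bound on $\det\calA(t)$ follows from the lower sectional curvature bound via a trace Riccati argument. In both cases the final step is a Sturm-type comparison for a scalar second order linear ODE, and the initial conditions $y(0)=0$, $y'(0)=1$ will come from the expansion $\calA(t) = t\,I + O(t^2)$ near $t=0$ (which itself follows from $\calA(0)=0$, $\calA'(0)=I$).

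For the lower bound, I would fix an arbitrary unit vector $w\in\{e_\dm\}^\perp \subset T_xM$, let $J$ be the Jacobi field along $\gamma$ with $J(0)=0$ and $J'(0)=w$, and set $f(t)=\|J(t)\|$. Using \eqref{eqn:Jacobi} and the Cauchy-Schwarz inequality $\langle J,J'\rangle^2\leq\|J\|^2\|J'\|^2$, a direct computation yields
\[
f''(t) \geq -\mathcal{K}(\gamma(t);\sigma(t))\,f(t),\qquad \sigma(t)=\mathrm{span}(\gamma'(t),J(t)),
\]
which the upper curvature bound promotes to $f''\geq c_M f$. A Wronskian-based Sturm comparison against $\psi_M$ (both functions have the same initial data $0,1$) then gives $f\geq \psi_M$. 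Since $w$ was arbitrary, every singular value of the restriction of $(d\exp_x)_{tu}$ to $\{e_\dm\}^\perp$ is at least $\psi_M(t)/t$, and \eqref{eqn:calA-exp} yields $\det\calA(t)\geq t^{\dm-1}(\psi_M(t)/t)^{\dm-1}=\psi_M(t)^{\dm-1}$.

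For the upper bound, the pointwise argument does not suffice (the bound $f''\geq -\mathcal{K} f$ only controls $f$ from below), so I would pass to the matrix Riccati equation for $U(t):=\calA'(t)\calA(t)^{-1}$. Symmetry of the Jacobi operator $\calR(\gamma'(t),\cdot)\gamma'(t)$ (expressed in a parallel orthonormal frame along $\gamma$) forces $U$ to be symmetric, and $U$ satisfies $U'+U^2+\calR(\gamma',\cdot)\gamma'=0$. Taking the trace, using the dimensional Cauchy-Schwarz $\mathrm{tr}(U^2)\geq (\mathrm{tr}\,U)^2/(\dm-1)$, and bounding $\mathrm{Ric}(\gamma',\gamma')\geq -(\dm-1)c_m$ by summing the lower sectional curvature hypothesis over an orthonormal basis of $\gamma'^\perp$, one arrives at
\[
h'(t)+\frac{h(t)^2}{\dm-1}\leq (\dm-1)\,c_m,\qquad h:=\mathrm{tr}(U)=(\log\det\calA)'.
\]
Substituting $y(t)=(\det\calA(t))^{1/(\dm-1)}$, so that $h=(\dm-1)y'/y$ and $h'+h^2/(\dm-1)=(\dm-1)y''/y$, reduces this to $y''\leq c_m\, y$. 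A second Sturm comparison against $\psi_m$ delivers $y\leq\psi_m$, i.e., $\det\calA(t)\leq\psi_m(t)^{\dm-1}$. The stated estimates on $|J(\exp_x)(u)|$ for arbitrary $u\in T_xM$ then follow from \eqref{eqn:Jexp-A} and the arbitrariness of the direction $e_\dm$.

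The main obstacle is precisely the asymmetry just highlighted: scalar Jacobi length comparison produces only lower bounds on $f$, which forces the upper determinant bound to be obtained through a genuinely matrix-level argument (the trace Riccati equation combined with the dimensional Cauchy-Schwarz inequality, which is what turns a Ricci lower bound into the desired scalar inequality $y''\leq c_m y$). A secondary subtlety is that $c_m,c_M$ in \eqref{eqn:psimM} appear as functions of a scalar variable, whereas in the hypothesis they are evaluated at $\theta_{\gamma(t)}$; this is harmless when $\gamma$ emanates from the pole $\p$ (so that $\theta_{\gamma(t)}=t$), and otherwise is reconciled via monotonicity of $c_m, c_M$ together with the triangle inequality $|\theta_{\gamma(t)}-\theta_x|\leq t$.
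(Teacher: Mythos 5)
Your plan is correct, and its overall architecture is the same as the paper's (which follows Chavel): the lower determinant bound comes from a scalar Jacobi length comparison under the upper curvature bound, and the upper determinant bound from the trace Riccati equation for $\mathcal{U}=\mathcal{A}'\mathcal{A}^{-1}$ combined with $\mathrm{tr}(\mathcal{U}^2)\geq(\mathrm{tr}\,\mathcal{U})^2/(\dm-1)$ and the Ricci consequence of the lower curvature bound. Where you genuinely diverge is in how each differential inequality is converted into the determinant estimate. For the lower bound, the paper does not use your smallest–singular–value shortcut: it first upgrades $|Y|''\geq c_M|Y|$ to the ratio inequality $|Y|'/|Y|\geq\psi_M'/\psi_M$, then diagonalizes $\mathcal{B}(\tau)=\mathcal{A}^*\mathcal{A}(\tau)$ at each fixed time to obtain the log-derivative inequality $(\det\mathcal{A})'/\det\mathcal{A}\geq(\dm-1)\psi_M'/\psi_M$, and integrates. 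Your argument — every unit $w\perp e_\dm$ gives $\|\mathcal{A}(t)w\|\geq\psi_M(t)$, hence $\det\mathcal{A}(t)\geq\psi_M(t)^{\dm-1}$ since $\det\mathcal{A}>0$ on a Cartan--Hadamard manifold — is simpler and valid; the paper's route buys the stronger differential inequality on $\log\det\mathcal{A}$, which is not needed for the stated conclusion. For the upper bound your substitution $y=(\det\mathcal{A})^{1/(\dm-1)}$ followed by a Wronskian/Sturm comparison is a clean variant of the paper's comparison of $\Phi=(\det\mathcal{A})'/\det\mathcal{A}$ with $\Psi_m=(\dm-1)\psi_m'/\psi_m$ via the auxiliary function $F=1-\Phi/\Psi_m$; both hinge on the symmetry of $\mathcal{U}$ (constancy of the Wronskian $\mathcal{A}^{*\prime}\mathcal{A}-\mathcal{A}^*\mathcal{A}'$), which you assert and the paper imports from Chavel, and both need the $\epsilon\to0^+$ limit with $\det\mathcal{A}(t)\sim t^{\dm-1}$ to fix the constants, which you have via $\mathcal{A}(t)=tI+O(t^2)$.

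One caution about your closing remark. You are right that the curvature hypothesis furnishes $f''\geq c_M(\theta_{\gamma(t)})f$ while \eqref{eqn:psimM} is an ODE in the arc-length variable, and that the two match when $\gamma$ emanates from the pole ($\theta_{\gamma(t)}=t$), which is the only situation the paper actually uses (Corollary \ref{cor:AV-bounds} and the Jacobian of $\exp_\p$ in Theorem \ref{HLSub}); the paper's own appendix silently drops the argument of $c_m,c_M$. But your proposed reconciliation for a general base point via monotonicity plus $|\theta_{\gamma(t)}-\theta_x|\leq t$ does not go through: for the upper bound one would need $c_m(\theta_{\gamma(t)})\leq c_m(t)$, i.e.\ $\theta_{\gamma(t)}\leq t$, which fails for geodesics starting away from $\p$ and moving outward (there $\theta_{\gamma(t)}=\theta_x+t>t$), and symmetrically $\theta_{\gamma(t)}\geq t$ can fail for the lower bound. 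So you should simply state and prove the comparison for radial geodesics from the pole, as the paper implicitly does, rather than claim the general case by that triangle-inequality argument.
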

\begin{proof} The approach follows the proofs of \cite[Theorems III.4.1 and III.4.3]{Chavel2006}, with some small caveats related to the nonconstant bounds of curvature. For completeness, we will present the proof in detail in Appendix \ref{appendix:Chavel-gen}. \end{proof}

\begin{remark}
\label{rmk:psimM} 
\normalfont
While in general one cannot find explicit solutions to the IVP's in \eqref{eqn:psimM}, these are linear problems and the global well-posedness of their solutions is guaranteed by standard results in ODE theory.
In addition, it also holds that
\begin{equation*}
 \psi_m(\theta) \geq 0, \; \psi_M(\theta) \geq 0 \qquad \text{ and } \qquad \psi_m'(\theta)\geq 1, \; \psi_M'(\theta)\geq 1, \quad \text{ for all } \theta \geq 0.
\end{equation*}
To show this claim, one can argue as following. Since $\psi_m(0)=0$ and $\psi_m'(0)=1$, there exists a small $\delta >0 $ such that $\psi_m(\theta) \geq 0$ for all $\theta\in [0, \delta]$. Also, since $c_m(\cdot) \geq 0$, we also have $\psi_m''(\theta)=c(\theta)\psi_m(\theta)\geq0$ for all $\theta\in[0, \delta]$, which implies that $\psi_m'$ is non-decreasing, and hence, $\psi_m'(\theta)\geq1$ for all $\theta\in[0, \delta]$. Let $T=\sup_{t \geq 0}\{t: \psi_m(\theta) \geq 0, \, \psi_m'(\theta)\geq1 \text{ for all } 0\leq \theta\leq t\}$. If $T<\infty$, then we can apply a similar argument as above to get $\psi_m(\theta) \geq 0$, $\psi_m'(\theta)\geq1$ for all $[T, T+\delta]$, for some $\delta>0$. So, we can conclude that $T=\infty$. A similar argument works for $\psi_M$. 
\end{remark}

\begin{remark} 
\label{rmk:c-consts}
\normalfont
When $c_m(\cdot) \equiv c_m$ and $c_M(\cdot) \equiv c_M$ are constant functions, the IVP's in \eqref{eqn:psimM} have the explicit solutions
 \[
\psi_m(\theta)=\frac{\sinh(\sqrt{c_m}\theta)}{\sqrt{c_m}},\quad \text{ and } \quad \psi_M(\theta)=\frac{\sinh(\sqrt{c_M}\theta)}{\sqrt{c_M}}.
\]
In this case, Theorem \ref{lemma:Chavel-thms-gen} reduces to the comparison results from \cite[Theorems III.4.1 and III.4.3]{Chavel2006}, i.e., for any $x \in M$ and a unit-speed geodesic $\gamma$ through $x$, it holds that 
\[
\left(\frac{\sinh(\sqrt{c_M}t)}{\sqrt{c_M}}\right)^{\dm-1}\leq \det\mathcal{A}(t)\leq \left(\frac{\sinh(\sqrt{c_m}t)}{\sqrt{c_m}}\right)^{\dm-1},
\]
and
\[
\left(
\frac{\sinh(\sqrt{c_M}\|u\|)}{\sqrt{c_M}\|u\|}
\right)^{\dm-1}\leq |J(\exp_x)(u)|\leq\left(
\frac{\sinh(\sqrt{c_m}\|u\|)}{\sqrt{c_m}\|u\|}
\right)^{\dm-1},
\]
for all $u \in T_x M$.
\end{remark}

\begin{remark}
\normalfont
We note that for the upper bound on $\det\mathcal{A}(t)$ a weaker condition on curvature can be assumed, namely that the Ricci curvature along $\gamma$ is greater than or equal to $-(\dm-1) c_m$ (see Theorem III.4.3 in \cite{Chavel2006}). We do not consider this weaker assumption here.
\end{remark}

Theorem \ref{lemma:Chavel-thms-gen} can be used to find bounds on the volume of geodesic balls in $M$. In the paper, $\bbs^{\dm -1}$ denotes the unit sphere in $\bbr^\dm$ and $\omega(\dm)$ the volume of the unit ball in $\bbr^\dm$.  Also, $B_\theta(\p)$ denotes the geodesic ball in $M$ centred at $\p$ of radius $\theta$, and $|B_\theta(\p)|$ denotes its volume.
\begin{corollary}(Generalizations of \cite[Theorems III.4.2 and III.4.4]{Chavel2006})
\label{cor:AV-bounds}
Suppose the sectional curvatures of $M$ satisfy \eqref{eqn:var-bounds}. Then, 
\[
 \dm w(\dm)\int_0^\theta \psi_M(t) ^{\dm-1} \d t \leq |B_{\theta}(\p)|\leq \dm w(\dm) \int_0 ^\theta \psi_m(t)^{\dm-1} \d t,
\]
with $\psi_m$ and $\psi_M$ defined as in Theorem \ref{lemma:Chavel-thms-gen}. 
\end{corollary}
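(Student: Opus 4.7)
The plan is to express $|B_\theta(o)|$ in geodesic polar coordinates centered at the pole $o$ and then insert the pointwise Jacobian estimate from Theorem \ref{lemma:Chavel-thms-gen}. Because $M$ is Cartan-Hadamard, $\exp_o : T_o M \to M$ is a global diffeomorphism, so the geodesic ball $B_\theta(o)$ is the image under $\exp_o$ of the Euclidean ball of radius $\theta$ in $T_o M \cong \bbr^\dm$, and no issues with conjugate points or cut loci arise.

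First I would fix an orthonormal basis of $T_o M$, identify $T_o M$ with $\bbr^\dm$, and parametrize $v \in T_o M$ in polar form as $v = t u$ with $t = \|v\| \geq 0$ and $u \in \bbs^{\dm-1}$. The Euclidean volume element then reads $t^{\dm-1} \d t \, \d\sigma(u)$, where $\d\sigma$ is the standard surface measure on $\bbs^{\dm-1}$. Pulling back the Riemannian volume by $\exp_o$ and using the change-of-variables formula yields
\[
|B_\theta(\p)| = \int_{\bbs^{\dm-1}} \int_0^\theta |J(\exp_o)(tu)|\, t^{\dm-1} \, \d t \, \d\sigma(u).
\]

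Next I would apply the Jacobian bounds from Theorem \ref{lemma:Chavel-thms-gen} at the point $x = \p$, namely
\[
\left( \frac{\psi_M(t)}{t} \right)^{\dm-1} \leq |J(\exp_\p)(tu)| \leq \left( \frac{\psi_m(t)}{t} \right)^{\dm-1},
\]
valid for every $u \in \bbs^{\dm-1}$ and every $t>0$. Multiplying by $t^{\dm-1}$ cancels the denominators and gives the pointwise estimate
\[
\psi_M(t)^{\dm-1} \leq |J(\exp_\p)(tu)|\, t^{\dm-1} \leq \psi_m(t)^{\dm-1},
\]
with bounds that are independent of $u$.

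Finally I would integrate this double inequality over $u \in \bbs^{\dm-1}$ and $t \in [0,\theta]$. The integrand on the bounding sides no longer depends on $u$, so the integration over $\bbs^{\dm-1}$ contributes a factor of $|\bbs^{\dm-1}| = \dm\, \omega(\dm)$, producing exactly
\[
\dm\, \omega(\dm) \int_0^\theta \psi_M(t)^{\dm-1} \, \d t \; \leq \; |B_\theta(\p)| \; \leq \; \dm\, \omega(\dm) \int_0^\theta \psi_m(t)^{\dm-1} \, \d t.
\]
There is no substantive obstacle here beyond correctly invoking the polar volume formula and Theorem \ref{lemma:Chavel-thms-gen}; the only mild point to verify is that the estimate for $|J(\exp_\p)(tu)|$ holds uniformly in $u$, which is immediate because Theorem \ref{lemma:Chavel-thms-gen} applies along every unit-speed geodesic emanating from $\p$.
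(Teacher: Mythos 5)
Your proposal is correct and follows essentially the same route as the paper: write $|B_\theta(\p)|$ in geodesic polar coordinates centred at $\p$ and insert the bounds of Theorem \ref{lemma:Chavel-thms-gen}, the only cosmetic difference being that you bound $|J(\exp_\p)(tu)|\,t^{\dm-1}$ while the paper bounds $\det\mathcal{A}(t;u)$ directly, which are the same quantity by \eqref{eqn:Jexp-A}.
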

\begin{proof} 
We will use here the full notation  $\calA(t; u)$ to indicate the dependence of $\calA$ on the unit tangent vector $u = \gamma'(0)$.  By using geodesic spherical coordinates centred at $\p$, the Riemannian measure on $M$ is given by (see \cite[Theorem III.3.1]{Chavel2006})
\begin{equation}
\label{eqn:vol-form}
\d x(\exp_\p(t u)) = \det\mathcal{A}(t;u) \d t \d \sigma(u),
\end{equation}
where $\d \sigma(u)$ denotes the Riemannian measure on $\bbs^{\dm -1}$.  The proof of the corollary now follows immediately from Theorem \ref{lemma:Chavel-thms-gen} together with \eqref{eqn:vol-form}, by writing
\[
| B_\theta(\p)| = \int_0^\theta \int_{\bbs^{\dm-1}} \det\mathcal{A}(t;u) \d \sigma(u) \d t.
\]
\end{proof}

\begin{remark}
\normalfont
When $c_m(\cdot) \equiv c_m$ and $c_M(\cdot) \equiv c_M$, Corollary \ref{cor:AV-bounds} reduces to
\[
 \dm w(\dm)\int_0^\theta \left(\frac{\sinh(\sqrt{c_M}t)}{\sqrt{c_M}}\right)^{\dm-1} \d t \leq |B_{\theta}(\p)|\leq \dm w(\dm) \int_0 ^\theta \left(\frac{\sinh(\sqrt{c_m} t)}{\sqrt{c_m}}\right)^{\dm-1} \d t,
\]
which is the result in \cite[Theorems III.4.2 and III.4.4]{Chavel2006}. The two bounds in this case represent the volume of the ball of radius $\theta$ in the hyperbolic space $\bbh^\dm$ of constant curvatures $-c_M$, and $-c_m$, respectively.
\end{remark}


\subsection{Properties of solutions to the IVP \eqref{eqn:psimM}.}

Next, we provide various properties of solutions to the IVP's in \eqref{eqn:psimM}, under additional assumptions on the functions $c_m$ and $c_M$. We list the results for a generic function $c$.  Throughout the paper, $\underline{D}f$ and $\overline{D}f$ denote, respectively, the lower and the upper derivatives of a real-valued function $f$, defined by
\[
\underline{D}f(x)=\liminf_{h\to0}\frac{f(x+h)-f(x)}{h},\qquad \overline{D}f(x)=\limsup_{h\to0}\frac{f(x+h)-f(x)}{h}.
\]

\begin{lemma}\label{estpsi}
Let $c:[0,\infty) \to (0,\infty)$ be a positive, continuous, and non-decreasing function. Consider the solution $\psi$ of the IVP:
\begin{equation}
\label{eqn:ivp-psi}
\begin{cases}
\psi''(\theta)=c (\theta)\psi (\theta),\qquad\forall \theta>0, \\[5pt]
\psi(0)=0,\quad \psi'(0)=1.
\end{cases}
\end{equation}
Then, for any $\epsilon>0$, there exists $\theta_0>0$ such that
\begin{align}\label{estpsires-relaxed}
\psi(\theta)\leq
\psi(\theta_0)\exp\left(
(1+\epsilon)\int_{\theta_0}^\theta\sqrt{c(t)}\, \d t \right), \qquad \forall  \theta \geq \theta_0.
\end{align}

Furthermore, assume in addition that $c(\cdot)$ satisfies
\begin{equation}
\label{eqn:c32}
\lim_{\theta\to\infty}\frac{\overline{D}c(\theta)}{c(\theta)^{3/2}}=0 \qquad\text{ and }\qquad \underline{D}c(\theta)>0, \qquad \forall \theta\geq0.
\end{equation}
Then, for any $0<\epsilon <1$, there exists $\theta_0>0$ such that
\begin{align}\label{estpsires}
\psi(\theta_0)\exp\left(
(1-\epsilon)\int_{\theta_0}^\theta\sqrt{c(t)}\, \d t
\right)\leq \psi(\theta), \qquad \forall \theta \geq \theta_0.
\end{align}
\end{lemma}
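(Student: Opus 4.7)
The plan is to analyze the logarithmic derivative $u := \psi'/\psi$, which, by virtue of $\psi'' = c\psi$, satisfies the Riccati equation $u' = c - u^2$. Both bounds will follow once pointwise estimates of the form
\[
(1-\epsilon)\sqrt{c(\theta)} \leq u(\theta) \leq (1+\epsilon)\sqrt{c(\theta)}
\]
are established for $\theta$ sufficiently large, since integrating $u = (\log\psi)'$ on $[\theta_0,\theta]$ immediately yields \eqref{estpsires-relaxed} and \eqref{estpsires}. Note also that $\psi'(0)=1$ together with $\psi''=c\psi \geq 0$ gives $\psi'\geq 1$ and hence $\psi(\theta)\geq \theta \to \infty$.

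For the upper bound I would introduce the ``energy'' $E(\theta) := \psi'(\theta)^2 - c(\theta)\psi(\theta)^2$. A Stieltjes-integration-by-parts computation (to accommodate non-smooth $c$) together with $\psi''=c\psi$ gives $E(\theta_2) - E(\theta_1) = -\int_{\theta_1}^{\theta_2} \psi^2\, dc \leq 0$, so $E$ is non-increasing; combined with $E(0)=1$ this yields $(\psi')^2 \leq 1 + c\psi^2$, equivalently $u^2 \leq c + 1/\psi^2$. Because $c \geq c(0)>0$ and $\psi(\theta)\to\infty$, for any $\epsilon>0$ one can choose $\theta_0$ so large that $1/(c(\theta_0)\psi(\theta_0)^2) \leq (1+\epsilon)^2 - 1$; monotonicity of $c$ and $\psi$ then forces $u(\theta)\leq (1+\epsilon)\sqrt{c(\theta)}$ for all $\theta\geq\theta_0$, and integration gives \eqref{estpsires-relaxed}.

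For the lower bound I would run a barrier argument against the curve $w(\theta) := (1-\epsilon)\sqrt{c(\theta)}$. At any point $\theta_*$ where $u(\theta_*) = w(\theta_*)$, the Riccati equation yields $u'(\theta_*) = (2\epsilon-\epsilon^2)\,c(\theta_*)$, while
\[
\overline{D}w(\theta_*) = \frac{1-\epsilon}{2}\,\frac{\overline{D}c(\theta_*)}{c(\theta_*)^{3/2}}\,c(\theta_*).
\]
The hypothesis $\overline{D}c/c^{3/2}\to 0$ makes it possible to fix $\theta_0$ so large that $\underline{D}(u-w)(\theta) \geq u'(\theta) - \overline{D}w(\theta) \geq \delta c(\theta)$ for some $\delta=\delta(\epsilon)>0$, whenever $u(\theta)\leq w(\theta)$ and $\theta\geq\theta_0$. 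Two consequences follow: (i) if $u(\theta_0)\geq w(\theta_0)$, then $u\geq w$ on $[\theta_0,\infty)$, since a crossing from above would force $\underline{D}(u-w)\leq 0$ at the crossing point, contradicting the strict positivity just established; (ii) if $u(\theta_0)<w(\theta_0)$, then $v := u-w$ grows at rate at least $\delta c(\theta_0)>0$ while negative, so $v$ reaches zero at some finite $\theta_1>\theta_0$ and case (i) applies starting from $\theta_1$. Either way $u\geq (1-\epsilon)\sqrt{c}$ on a tail interval, and integration produces \eqref{estpsires}.

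The main obstacle I expect is the careful implementation of the no-crossing argument in the Dini-derivative setting, since $c$ is assumed only continuous. The estimates on $\overline{D}w$ use $\overline{D}\sqrt{c} = \overline{D}c/(2\sqrt{c})$ together with the Dini-subadditivity $\underline{D}(u-w)\geq u' - \overline{D}w$ (valid because $u$ is $C^1$). The positivity assumption $\underline{D}c>0$ ensures $c$ is strictly increasing, which makes the monotonicity arguments non-degenerate, and the decay $\overline{D}c/c^{3/2}\to 0$ is precisely what secures the algebraic inequality $(2\epsilon-\epsilon^2)>\tfrac{1-\epsilon}{2}\,\overline{D}c/c^{3/2}$ on a suitable tail, which drives the whole barrier argument.
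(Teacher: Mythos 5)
Your proposal is correct, but it takes a genuinely different route from the paper's. The paper works with the reciprocal quantity $G=\psi/\psi'$, which satisfies $G'=1-cG^2$: the upper bound \eqref{estpsires-relaxed} is obtained from a two-case comparison showing $G(\bar\theta)\geq\tanh\bigl(\sqrt{c(\bar\theta)}\,\bar\theta\bigr)/\sqrt{c(\bar\theta)}$, hence $\liminf_{\theta\to\infty}\sqrt{c(\theta)}\,G(\theta)\geq1$, while the lower bound \eqref{estpsires} comes from an analysis of the zero set of $G'$ (this is where $\underline{D}c>0$ enters, to show that set is a single isolated point, after which $G>1/\sqrt{c}$) followed by a Dini-derivative Gronwall-type estimate with the integrating factor $\exp\bigl(\int_0^\theta\sqrt{c(s)}\,\d s\bigr)$ yielding $\limsup_{\theta\to\infty}\sqrt{c(\theta)}\,G(\theta)\leq1$. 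You instead work directly with $u=\psi'/\psi$: for the upper bound, the monotonicity of $E=(\psi')^2-c\psi^2$ (legitimate via Riemann--Stieltjes integration by parts, since $c$ is continuous and monotone, so $\d E=-\psi^2\,\d c\leq0$) gives the pointwise bound $u^2\leq c+1/\psi^2$, which together with $\psi(\theta)\geq\theta\to\infty$ and $c\geq c(0)>0$ immediately gives $u\leq(1+\epsilon)\sqrt{c}$ on a tail; for the lower bound, a barrier argument against $w=(1-\epsilon)\sqrt{c}$, where the Riccati equation forces $\underline{D}(u-w)\geq\delta\,c$ whenever $0<u\leq w$ and $\theta$ is large, using only $\overline{D}c/c^{3/2}\to0$. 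Your upper-bound argument is shorter and cleaner than the paper's tanh comparison, and your lower-bound argument is more economical in hypotheses, since it never actually uses $\underline{D}c>0$ (only monotonicity of $c$ and the decay of $\overline{D}c/c^{3/2}$), whereas the paper's uniqueness-of-zero step does. The points that need care in a full write-up are exactly the ones you flag: $\underline{D}(u-w)\geq u'-\overline{D}w$ (valid since $u\in C^1$ on $(0,\infty)$, as $\psi>0$ there), the identity $\overline{D}\sqrt{c}=\overline{D}c/(2\sqrt{c})$ by continuity of $c$, the positivity $u>0$ (from $\psi'\geq1$, $\psi(\theta)\geq\theta$) needed to pass from $u\leq w$ to $u^2\leq w^2$, and the standard Dini-derivative facts that a continuous function with $\underline{D}v\geq m>0$ while $v\leq 0$ must increase at rate $m$ and cannot cross from nonnegative to negative; all of these are routine, so the argument is sound.
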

\begin{proof}
See Appendix \ref{appendix:estpsi}.
\end{proof} 

\begin{remark}
\label{rmk:c-ex}
\normalfont
There are various examples of functions $c(\cdot)$, relevant to our work, which satisfy condition \eqref{eqn:c32}, e.g., 
\[
c(\theta) = \theta^k,\quad c(\theta)= e^{\theta^k},\quad c(\theta) = 1-e^{-\theta^k},
\]
for all $k>0$. Specifically, the first two examples above, when used as bounds for the curvature (see \eqref{eqn:var-bounds}), correspond to manifolds whose negative curvature grows algebraically, respectively exponentially, at infinity.
\end{remark}

\begin{remark}
\label{rmk:3/2cond}
\normalfont
To give some intuition on the first assumption in \eqref{eqn:c32}, consider a non-decreasing $C^1$ function $c(\cdot)$ which satisfies
\[
\lim_{\theta\to\infty}\frac{c'(\theta)}{c(\theta)^{3/2}}>0.
\]
Then, there exists $\delta>0$ and $\theta_0>0$ such that
\[
\frac{c'(\theta)}{c(\theta)^{3/2}}\geq\delta, \qquad  \text{ for all }\theta\geq\theta_0.
\]
By solving the differential inequality, we get
\[
-\frac{2}{c(\theta)^{1/2}}+\frac{2}{c(\theta_0)^{1/2}}\geq \delta (\theta-\theta_0), 
\]
which implies
\[
c(\theta)\geq \left( \frac{1}{c(\theta_0)^{1/2}}-\frac{\delta(\theta-\theta_0)}{2} \right)^{-2}.
\]
From here one infers that $c(\theta) \to \infty$ as $\theta \to \theta_0+\frac{2}{\delta c(\theta_0)^{1/2}}$ and hence, $c(\theta)$ is not defined globally for all $\theta \geq 0$. Given these considerations, assumption \eqref{eqn:c32} is natural for a globally defined function $c(\cdot)$, as used in this paper.
\end{remark}

 \begin{lemma}\label{lem:linearlog}
Let $c:[0,\infty) \to (0,\infty)$ be a positive, continuous, and non-decreasing function, and $\psi$ be the solution of the IVP \eqref{eqn:ivp-psi}. Then, we have
\[
\liminf_{\theta\to\infty}\frac{\log\left(\frac{\psi(\theta)}{\theta}\right)}{\theta}\geq A,
\] 
for some $A>0$.
\end{lemma}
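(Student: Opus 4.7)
The plan is to reduce the problem to the constant-coefficient case by exploiting that $c(\cdot)$ is non-decreasing and positive, hence bounded below by $c_0 := c(0)>0$. Note that this lemma does \emph{not} assume the extra hypothesis \eqref{eqn:c32}, so the sharper lower bound \eqref{estpsires} from Lemma \ref{estpsi} is not directly available. However, a Sturm-type comparison with the solution for the constant right-hand side $c_0$ is enough.

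First, I would introduce the reference solution $\phi(\theta) = \sinh(\sqrt{c_0}\,\theta)/\sqrt{c_0}$, which solves the IVP $\phi'' = c_0\phi$ with $\phi(0)=0$, $\phi'(0)=1$. Then I would show $\psi(\theta) \geq \phi(\theta)$ for all $\theta \geq 0$ via the Wronskian-type quantity
\[
W(\theta) = \psi'(\theta)\phi(\theta) - \psi(\theta)\phi'(\theta),
\]
which satisfies $W(0)=0$ and
\[
W'(\theta) = \psi''(\theta)\phi(\theta) - \psi(\theta)\phi''(\theta) = \bigl(c(\theta) - c_0\bigr)\psi(\theta)\phi(\theta) \geq 0,
\]
since $c(\theta) \geq c_0$ (monotonicity), $\phi(\theta) \geq 0$ (by the explicit formula), and $\psi(\theta) \geq 0$ (by Remark \ref{rmk:psimM}). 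Hence $W \geq 0$ on $[0,\infty)$, so $(\psi/\phi)' = W/\phi^2 \geq 0$ on $(0,\infty)$. Combined with $\lim_{\theta\to 0^+}\psi(\theta)/\phi(\theta) = \psi'(0)/\phi'(0) = 1$ (L'Hôpital), this yields $\psi(\theta) \geq \phi(\theta)$ for all $\theta\geq 0$.

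With this comparison in hand, the conclusion is immediate: for $\theta$ large,
\[
\frac{\log(\psi(\theta)/\theta)}{\theta}
\;\geq\; \frac{\log\!\bigl(\sinh(\sqrt{c_0}\,\theta)/(\sqrt{c_0}\,\theta)\bigr)}{\theta}
\;\longrightarrow\; \sqrt{c_0} \quad \text{as } \theta \to \infty,
\]
using $\sinh(\sqrt{c_0}\,\theta) = \tfrac{1}{2}(e^{\sqrt{c_0}\,\theta} - e^{-\sqrt{c_0}\,\theta})$ and that the $\log\theta$ correction is sublinear. Therefore $\liminf_{\theta\to\infty}\log(\psi(\theta)/\theta)/\theta \geq \sqrt{c_0}$, and we may take any $A \in (0,\sqrt{c(0)}]$.

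I do not expect any substantial obstacle; the proof is essentially a one-line Sturm comparison, the only minor care being the verification that $\psi,\phi \geq 0$ (guaranteed by Remark \ref{rmk:psimM} together with the explicit form of $\phi$) so that the sign of $W'$ is controlled. The point of the lemma is qualitative — exponential growth of $\psi/\theta$ — and it is exactly what is needed when $c = c_m$, without requiring the finer asymptotic condition \eqref{eqn:c32}.
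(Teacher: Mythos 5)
Your proposal is correct and follows essentially the same route as the paper: comparison of $\psi$ with the constant-coefficient solution $\sinh(\sqrt{c(0)}\,\theta)/\sqrt{c(0)}$, followed by the elementary asymptotics of $\log\bigl(\sinh(\sqrt{c(0)}\,\theta)/(\sqrt{c(0)}\,\theta)\bigr)/\theta \to \sqrt{c(0)}$. The only difference is that you justify the comparison $\psi \geq \psi_0$ explicitly via the Wronskian, whereas the paper simply asserts it; this is a welcome but minor elaboration.
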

\begin{proof}
Since $c(\cdot)$ is non-decreasing, we have
\[
\psi''(\theta)=c(\theta)\psi(\theta)\geq c(0) \psi(\theta), \qquad \forall \theta\geq 0.
\]
Denote by $\psi_0(\theta)$ the solution of
\[
\psi_0''(\theta)=c(0) \psi_0(\theta),\quad \psi_0(0)=0,\quad \psi_0'(0)=1,
\]
which is given explicitly by
\[
\psi_0(\theta)=\frac{\sinh(\sqrt{c(0)}\theta)}{\sqrt{c(0)}}.
\]

It holds that $\psi(\theta)\geq \psi_0(\theta)$, for all $\theta \geq 0$, and hence,
\begin{align*}
\frac{\log\left(\frac{\psi(\theta)}{\theta}\right)}{\theta} 
& \geq \frac{\log\left(\frac{\sinh(\sqrt{c(0} \, \theta)}{ \sqrt{c(0)} \theta}\right)}{\theta} \\
&=\sqrt{c(0)} +\frac{\log\left(\frac{1-e^{-2 \sqrt{c(0)} \theta}}{2 \sqrt{c(0)} \theta}\right)}{\theta}.
\end{align*}
From L'H\^{o}pital theorem, we get
\begin{align*}
\lim_{\theta\to\infty}\frac{\log\left(\frac{1-e^{-2\sqrt{c(0)} \theta}}{2 \sqrt{c(0)} \theta}\right)}{\theta}& =\lim_{\theta\to\infty}\frac{1}{\frac{1-e^{-2 \sqrt{c(0)} \theta}}{2 \sqrt{c(0)} \theta}} \cdot  \frac{2 \sqrt{c(0)} e^{-2\sqrt{c(0)} \theta} \theta - (1-e^{-2\sqrt{c(0)} \theta})}{2 \sqrt{c(0)}\theta^2} \\
&=\lim_{\theta\to\infty}\frac{e^{-2\sqrt{c(0)}\theta}(2 \sqrt{c(0)} \theta +1) -1}{\theta(1-e^{-2\sqrt{c(0)} \theta})} \\[2pt]
&=0,
\end{align*}
which implies
\[
\liminf_{\theta\to\infty}\frac{\log\left(\frac{\psi(\theta)}{\theta}\right)}{\theta}\geq \sqrt{c(0)}.
\]
We conclude the statement of the lemma with $A = \sqrt{c(0)}>0$.
\end{proof}


\section{Nonexistence of a global minimizer }
\label{sec:non-existence}

We will show that the energy functional \eqref{eqn:energy} does not admit global minimizers when the attractive forces at infinity are not strong enough to contain the diffusion. We fix a pole $\p \in M$ and assume that the sectional curvatures $\mathcal{K}(x;\sigma)$ are bounded above by a function $-c_M(\theta_x)<0$ that can grow unbounded as $\theta_x \to \infty$. We denote by $\psi_M$  the solution to the initial-value problem
\begin{equation}
\label{eqn:IVP-psiM}
\begin{cases}
\psi_M''(\theta)=c_M(\theta)\psi_M(\theta), \qquad \forall \theta>0,\\[5pt]
\psi_M(0)=0,\quad \psi_M'(0)=1.
\end{cases}
\end{equation}

The main result is the following.
\begin{theorem}[Nonexistence by spreading]
\label{thm:nonexistence-gen}
Let $M$ be an $n$-dimensional Cartan-Hadamard manifold and $\p \in M$ a fixed pole. Assume that the sectional curvatures of $M$ satisfy
\[
\mathcal{K}(x; \sigma)\leq -c_M(\theta_x),
\]
for all $x \in M$ and all two-dimensional subspaces $\sigma \subset T_x M$, where
$c_M(\cdot)$ is a positive and continuous function such that
\begin{equation}
\label{eqn:c32-cM}
\lim_{\theta\to\infty}\frac{\overline{D}c_M(\theta)}{c(\theta)^{3/2}}=0\qquad\text{and}\qquad \underline{D}c_M(\theta)>0,\qquad\forall \theta\geq0.
\end{equation}
Let $h:[0,\infty) \to [0,\infty)$ be a non-decreasing lower semi-continuous function that satisfies
\begin{equation}\label{nonexcondi2}
\limsup_{\theta \to\infty}\left(h(\theta )-A \int_0^{\theta-\delta}\sqrt{c_M \left(t/2\right)}\, \d t \right)=-\infty,
\end{equation}
for some $A<\dm-1$, and some $\delta>0$.  Then, the energy functional \eqref{eqn:energy} admits no global minimizer in $\calP_{ac}(M)$.
\end{theorem}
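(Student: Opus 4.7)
The plan is to construct an explicit sequence of admissible densities $(\rho_R)_{R>0}$ along which $E[\rho_R] \to -\infty$; this immediately precludes existence of a global minimizer since any minimizer would give a finite value. The natural candidate is the uniform density $\rho_R = |B_R(\p)|^{-1} \mathbf{1}_{B_R(\p)}$ on the geodesic ball of radius $R$ centred at the pole. A direct computation gives the entropy as $-\log |B_R(\p)|$, while the interaction term is bounded above by $\tfrac{1}{2} h(2R)$ because $\dist(x,y) \leq 2R$ for all $x, y \in B_R(\p)$ and $h$ is non-decreasing.

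The crux is a sharp lower bound on $|B_R(\p)|$ showing the logarithm of the volume dominates $\tfrac{1}{2} h(2R)$ at infinity. I would combine Corollary \ref{cor:AV-bounds} with the monotonicity of $\psi_M$ (Remark \ref{rmk:psimM}) to restrict the volume integral to a thin annulus and bound
\[
|B_R(\p)| \;\geq\; \dm\, \omega(\dm) \int_{R-\delta/2}^{R} \psi_M(t)^{\dm-1} \d t \;\geq\; \tfrac{\delta}{2}\, \dm\, \omega(\dm)\, \psi_M(R-\delta/2)^{\dm-1},
\]
and then invoke the lower exponential estimate \eqref{estpsires} of Lemma \ref{estpsi}, whose hypotheses are precisely \eqref{eqn:c32-cM}. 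This yields, for any $\epsilon \in (0,1)$ and all $R$ sufficiently large,
\[
\log |B_R(\p)| \;\geq\; (\dm-1)(1-\epsilon) \int_0^{R - \delta/2} \sqrt{c_M(t)}\, \d t \;-\; C_\epsilon.
\]
Independently, the change of variable $s = t/2$ in the integral of \eqref{nonexcondi2}, applied with $\theta = 2R$, recasts the hypothesis as
\[
\tfrac{1}{2} h(2R) - A \int_0^{R - \delta/2} \sqrt{c_M(s)}\, \d s \;\xrightarrow[R \to \infty]{}\; -\infty.
\]
The choice of shift $\delta/2$ in the volume estimate is dictated precisely by this change of variable, so that the integrals in the two inequalities match exactly.

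To conclude, pick $\epsilon > 0$ small enough that $(\dm-1)(1-\epsilon) > A$, possible since $A < \dm - 1$. Summing the entropy and interaction bounds gives
\[
E[\rho_R] \;\leq\; \tfrac{1}{2} h(2R) - (\dm-1)(1-\epsilon) \int_0^{R - \delta/2} \sqrt{c_M(t)}\, \d t + C_\epsilon \;\leq\; \tfrac{1}{2} h(2R) - A \int_0^{R - \delta/2} \sqrt{c_M(t)}\, \d t + C_\epsilon,
\]
which tends to $-\infty$ by the rewritten hypothesis. I expect the main technical obstacle to be the sharp volume lower bound, specifically the invocation of \eqref{estpsires}, which needs the full force of \eqref{eqn:c32-cM}. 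The other delicate aspect is aligning the integrals in the volume bound and in the hypothesis through the $\delta/2$ shift; a mismatched shift would produce residual boundary terms of the form $\sqrt{c_M(R)}$ which, under unbounded curvature, could be of the same order as or larger than the entropy gain and thus derail the argument.
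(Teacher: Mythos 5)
Your proposal is correct and follows essentially the same route as the paper's proof: the same uniform densities $\rho_R$ on $B_R(\p)$, the entropy bound $-\log|B_R(\p)|$ with interaction bounded by $\tfrac{1}{2}h(2R)$, the volume lower bound over the annulus $[R-\delta/2,R]$ via Corollary \ref{cor:AV-bounds} and the exponential estimate \eqref{estpsires} of Lemma \ref{estpsi}, and the change of variable $t=2s$ matching the $\delta/2$ shift to hypothesis \eqref{nonexcondi2}. The only cosmetic differences (using monotonicity of $\psi_M$ before invoking \eqref{estpsires}, and choosing $\epsilon$ with $(\dm-1)(1-\epsilon)>A$ rather than equality) do not change the argument.
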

\begin{proof}
Consider the following family of densities $\rho_R$ defined for all $R>0$:
\begin{equation*}
\rho_{R}(x)=\begin{cases}
\displaystyle\frac{1}{|B_R(\p)|},&\quad\text{when }x\in B_R(\p),\\[10pt]
0,&\quad\text{otherwise}.
\end{cases}
\end{equation*}
Then, we have
\begin{align}
E[\rho_R]
&= \int_M \rho_{R}(x)\log\rho_{R}(x)\d x+\frac{1}{2}\iint_{M \times M} h(\dist(x, y))\rho_{R}(x)\rho_{R}(y)\d x \d y \nonumber \\
&\leq-\log|B_R(\p)|+\frac{h(2R)}{2}, \label{eqn:E-rhoR}
\end{align}
where for the inequality we used 
\[
\sup_{x, y\in\mathrm{supp}(\rho_R)}\dist(x, y)=2R.
\]
We will show that $E[\rho_R] \to - \infty$ as $R \to \infty$, and hence, no global energy minimizers in $\calP_{ac}(M)$ exist. 

Since $A<\dm-1$, set $\epsilon \in (0,1)$ by 
\begin{equation}
\label{eqn:A}
A = (1-\epsilon)(\dm-1).
\end{equation}
By the assumptions on $c_M$ and Lemma \ref{estpsi}, there exists $\theta_0>0$ such that
\begin{equation}
\label{eqn:psiM-lb}
\psi_M(\theta)\geq \psi_M(\theta_0)\exp\left( (1-\epsilon)\int_{\theta_0}^\theta\sqrt{c_M(t)}\, \d t \right), \qquad \text{ for all } \theta > \theta_0,
\end{equation}
where $\psi_M$ is the solution of the IVP \eqref{eqn:IVP-psiM}.

Then, for any $R>\theta_0+\frac{\delta}{2}$ (with $\delta$ as in \eqref{nonexcondi2}), combine Corollary \ref{cor:AV-bounds} part ii)  and \eqref{eqn:psiM-lb} to find
\begin{align*}
|B_R(\p)|& \geq  \dm w(\dm)\int_0^R \psi_M(\theta) ^{\dm-1} \d \theta \\
&\geq   \dm w(\dm)\int_{R - \frac{\delta}{2}}^R \psi_M(\theta) ^{\dm-1} \d \theta  \\
&\geq \dm w(\dm) \int_{R - \frac{\delta}{2}}^R\psi_M(\theta_0)^{\dm-1}\exp\left(
(1-\epsilon)(\dm-1)\int_{\theta_0}^\theta \sqrt{c_M(t)} \, \d t \right)\d \theta.
\end{align*}
Furthermore, using that $\displaystyle \int_{\theta_0}^\theta \sqrt{c_M(t)}\, \d t \geq  \int_{\theta_0}^{R - \frac{\delta}{2}} \sqrt{c_M(t)}\, \d t $, for all $R - \frac{\delta}{2} < \theta < R$, we get
\begin{align*}
|B_R(\p)| &\geq\dm w(\dm)\psi_M(\theta_0)^{\dm-1} \int_{R-\frac{\delta}{2}}^R\exp\left(
(1-\epsilon)(\dm-1)\int_{\theta_0}^{R-\frac{\delta}{2}} \sqrt{c_M(t)} \, \d t \right) \d\theta\\
&=\frac{\delta}{2} \dm w(\dm)\psi_M(\theta_0)^{\dm-1} \exp\left(
(1-\epsilon)(\dm-1)\int_{\theta_0}^{R-\frac{\delta}{2}} \sqrt{c_M(t)} \, \d t
\right).
\end{align*}
The above yields
\[
\log|B_R(\p)|\geq \log \left(\frac{\delta}{2} \dm w(\dm)\psi_M(\theta_0)^{\dm-1} \right)+(1-\epsilon)(\dm-1)\int_{\theta_0}^{R-\frac{\delta}{2}} \sqrt{c_M(t)}\, \d t.
\]

We can now estimate $E[\rho_R]$ for $R> \theta_0 + \frac{\delta}{2}$ (see \eqref{eqn:E-rhoR}) as follows:
\begin{align*}
E[\rho_R] &\leq -\log \left(\frac{\delta}{2} \dm w(\dm)\psi_M(\theta_0)^{\dm-1} \right)-(1-\epsilon)(\dm-1)\int_{\theta_0}^{R-\frac{\delta}{2}} \sqrt{c_M(t)} \, \d t+\frac{h(2R)}{2}\\[2pt]
&=-\log \left(\frac{\delta}{2} \dm w(\dm)\psi_M(\theta_0)^{\dm-1} \right)+(1-\epsilon)(\dm-1)\int_{0}^{\theta_0} \sqrt{c_M(t)}\, \d t \\
& \quad -(1-\epsilon)(\dm-1)\int_{0}^{R-\frac{\delta}{2}} \sqrt{c_M(t)} \,\d t+\frac{h(2R)}{2}.
\end{align*}
Since by the assumption \eqref{nonexcondi2} on $h$ (see also \eqref{eqn:A}) and a simple change of variable, we have
\begin{equation*}
\limsup_{R\to\infty}\left(\frac{h(2R)}{2}-(1-\epsilon)(\dm-1)\int_0^{R-\frac{\delta}{2}}\sqrt{c_M(t)}\, \d t\right)=-\infty,
\end{equation*}
we infer $\lim_{R\to\infty}E[\rho_R]=-\infty$. 
\end{proof}

\begin{example}
\label{ex:nonexist}
\normalfont
For certain functions $c_M$ we can compute $ \int_0^{\theta-\delta}\sqrt{c_M \left(t/2\right)}\, \d t $ and hence, find explicit expressions for the behaviour of $h$ at infinity that lead to spreading.
\begin{enumerate}
\item Constant: $c_M(\theta) \equiv c_M$. This is the case studied in \cite{FePa2024}. Note however that constant functions do not satisfy the second condition in \eqref{eqn:c32-cM}, so Theorem \ref{thm:nonexistence-gen} cannot be applied directly. Nevertheless, compute (see \eqref{nonexcondi2}):
\[
h(\theta)- A \int_{0}^{\theta-\delta}\sqrt{c_M}\, \d t=h(\theta)-A \sqrt{c_M}\theta+A \delta \sqrt{c_M}.
\]
Then, had the statement of Theorem \ref{thm:nonexistence-gen} applied to this case, we recover the result in \cite[Theorem 2.1]{FePa2024}.  

\item Power law: $c_M(\theta)=\theta^k$ with $k\geq1$. We compute
\[
h(\theta)-A \int_{0}^{\theta-\delta}2^{-k/2} t^{k/2}\d t=h(\theta)-\frac{A 2^{-k/2}} {k/2+1}\left(\theta-\delta\right)^{k/2+1},
\]
which implies that spreading occurs if $h$ grows slower than $\theta^{k/2+1}$ at infinity.

\item Exponential growth: $c_M(\theta)= e^{\beta\theta}$ with $\beta >0$. We find
\[
h(\theta)-A\int_{0}^{\theta-\delta} e^{\beta t/4} \d t=h(\theta)-
\frac{A}{{\beta/4}}\left(e^{\beta(\theta-\delta)/4}-1\right).
\]
Hence, in this case spreading cannot be prevented provided $h$ grows slower than $e^{\beta \theta /4}$ at infinity.
\end{enumerate}
\end{example}


\section{Logarithmic HLS inequality on Cartan-Hadamard manifolds}\label{sec:HLS}
This section presents a new logarithmic HLS inequality on Cartan-Hadamard manifolds, which is a key tool for our results on existence of global energy minimizers. We first recall the logarithmic HLS inequality on $\bbr^\dm$, given by the following theorem.
\begin{theorem}[Logarithmic HLS inequality on $\bbr^\dm$ \cite{CaDePa2019}]
\label{thm:HLS-Rd}
Let $\rho\in\mathcal{P}_{ac}(\bbr^\dm)$ satisfy $\log(1+|\cdot|^2)\rho\in L^1(\bbr^\dm)$. Then there exists $C_0\in \bbr$ depending only on $\dm$, such that
\[
-\int_{\bbr^\dm}\int_{\bbr^\dm}\log(|x-y|)\rho(x)\rho(y)\d x \d y \leq \frac{1}{\dm}\int_{\bbr^\dm} \rho(x)\log\rho(x) \d x+C_0.
\]
\end{theorem}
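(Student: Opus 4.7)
The plan is to derive this logarithmic inequality as a limiting case of the classical Hardy--Littlewood--Sobolev inequality with power singularity, letting the singularity exponent tend to zero. The classical HLS states that for $0 < \lambda < \dm$ and the conjugate exponent $p = \frac{2\dm}{2\dm - \lambda}$, there exists a constant $C_{\dm,\lambda}$ such that
\[
\iint_{\bbr^\dm \times \bbr^\dm} \frac{\rho(x)\rho(y)}{|x-y|^\lambda} \d x \d y \leq C_{\dm,\lambda} \|\rho\|_p^2,
\]
for all nonnegative $\rho \in L^p(\bbr^\dm)$. Applying this with a probability density $\rho$ and letting $\lambda \to 0^+$, then matching first-order terms in $\lambda$, should deliver the desired log-HLS inequality.

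First, I would expand both sides of the classical HLS asymptotically in $\lambda$. On the left, using $|x-y|^{-\lambda} = 1 - \lambda \log|x-y| + O(\lambda^2)$ and the fact that $\rho$ has total mass $1$, the zeroth-order term is $1$ and the coefficient of $\lambda$ is exactly $-\iint \log|x-y|\, \rho(x)\rho(y) \d x \d y$. On the right, write $\rho^p = \rho \cdot e^{(p-1)\log\rho}$ and expand to obtain
\[
\|\rho\|_p^p = 1 + (p-1) \int \rho \log \rho \d x + O((p-1)^2),
\]
which gives $\|\rho\|_p^2 = 1 + \frac{2(p-1)}{p}\int \rho \log \rho \d x + O((p-1)^2)$. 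Since $p-1 = \frac{\lambda}{2\dm-\lambda}$, the coefficient of $\int \rho \log\rho$ in the expansion is $\frac{\lambda}{\dm} + O(\lambda^2)$. Tracking the behavior of $C_{\dm,\lambda}$ near $\lambda = 0$ (for instance via Young-inequality bounds on HLS constants), and matching the $O(\lambda)$ coefficients on both sides, yields the log-HLS inequality with entropy coefficient $\tfrac{1}{\dm}$ and an explicit constant $C_0$ depending only on $\dm$.

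The main obstacle is justifying the limit rigorously. The log-integral on the left is only guaranteed finite under the hypothesis $\log(1+|\cdot|^2)\rho \in L^1(\bbr^\dm)$, since otherwise $\log|x-y|$ fails to be integrable against the product measure at infinity; this is precisely why the moment assumption appears in the statement. To exchange the limit with the double integral, I would split the domain into the near-singular region $\{|x-y|\leq 1\}$ and the tail region $\{|x-y|>1\}$, and apply dominated convergence on each separately: on the near-singular piece one controls the integrand using $\rho \log \rho \in L^1$ (which is the finiteness of the entropy), and on the tail one uses the logarithmic moment bound. For the right-hand side, the expansion of $\|\rho\|_p^p$ must be justified when $\rho$ is unbounded; the standard route is truncation $\rho \wedge N$ followed by monotone convergence, together with a uniform error bound coming from finiteness of $\int \rho \log \rho \d x$.

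As an alternative route giving the sharp constant, one may invoke the Carlen--Loss proof via stereographic lifting to $\bbs^\dm$. Under this conformal change of variables the log-HLS inequality on $\bbr^\dm$ becomes equivalent to a Beckner--Onofri-type inequality on the sphere, whose extremals are classified via conformal invariance and competing-symmetries. This path is more involved but identifies the optimal $C_0$ explicitly; for the present application, where only existence of the constant is needed, the limiting argument outlined above is sufficient.
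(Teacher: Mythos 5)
A preliminary remark on the comparison itself: the paper does not prove Theorem \ref{thm:HLS-Rd} — it imports it from \cite{CaDePa2019, CarlenLoss1992} and only proves its manifold generalization (Theorem \ref{HLSub}). So your proposal is not paralleling or diverging from a proof in the paper; it is supplying a proof of a quoted result. On its own merits, the $\lambda \to 0^+$ limiting argument from the power-law HLS inequality is a legitimate, classical route (essentially Beckner's), and the first-order bookkeeping you do (coefficient $\lambda/\dm$ in front of the entropy, coefficient $-\lambda$ in front of the logarithmic interaction) is correct. But as written there are two genuine gaps.

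First, the step ``tracking the behavior of $C_{\dm,\lambda}$ near $\lambda=0$ via Young-inequality bounds'' is exactly where the argument can fail. To extract anything at order $\lambda$ you need $C_{\dm,\lambda} = 1 + O(\lambda)$: the zeroth-order value must be exactly $1$, with a controlled first-order error. Constants coming from weak Young or interpolation do not converge to $1$, and any slack $C_{\dm,\lambda}-1$ bounded away from zero is fatal, since after dividing by $\lambda$ it sends the would-be $C_0$ to $+\infty$. In practice one must invoke Lieb's sharp HLS constant (explicit, smooth in $\lambda$, equal to $1$ at $\lambda=0$); with that input the matching of first-order terms is sound. Second, for a density satisfying only $\rho\log\rho \in L^1$ and the logarithmic moment condition, $\rho$ need not belong to $L^p(\bbr^\dm)$ for any $p>1$ (finite entropy gives no higher integrability), so for such $\rho$ the classical HLS inequality you start from can be vacuous — the right-hand side infinite for every $\lambda>0$ — and no expansion in $\lambda$ is available. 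The truncation therefore cannot be used merely ``to justify the expansion of $\|\rho\|_p^p$'': you must first prove the logarithmic inequality for bounded, compactly supported (normalized) densities, where differentiation at $\lambda=0$ is justified by dominated convergence (local integrability of $\log$ near the diagonal controlled by the entropy, the tail by the log-moment), and only afterwards remove the truncation in the final log-HLS inequality, verifying convergence of the entropies and of the logarithmic interaction along the truncated sequence. With these two repairs the outline becomes a correct proof; alternatively, the Carlen--Loss route you mention yields the sharp constant directly and is precisely the source the paper cites.
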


We generalize Theorem \ref{thm:HLS-Rd} to Cartan-Hadamard manifolds.  The result is the following. 

\begin{theorem}[Logarithmic HLS inequality on Cartan-Hadamard manifolds] 
\label{HLSub}
Let $M$ be an $\dm$-dimensional Cartan-Hadamard manifold, and $\p \in M$ be a fixed (arbitrary) pole. Assume that the sectional curvatures of $M$ satisfy, for all $x \in M$ and all sections $\sigma \subset T_x M$, 
\begin{equation}
\label{eqn:K-lowerB}
-c_m(\theta_x)\leq \mathcal{K}(x;\sigma)\leq 0,
\end{equation}
where $c_m(\cdot)$ is a continuous positive function.  Let $\psi_m$ be the solution to the following IVP:
\begin{equation}
\label{eqn:IVP-psim}
\begin{cases}
\psi_m''(\theta)=c_m(\theta) \psi_m(\theta), \qquad \forall \theta>0,\\[5pt]
\psi_m(0)=0,\quad \psi'_m(0)=1.
\end{cases}
\end{equation}
Then, if $\rho\in \mathcal{P}_{ac}(M)$ and $\displaystyle  \int_M \log(1+|\theta_x|^2)\rho(x)\d x<\infty$, we have
\begin{equation}
\label{eqn:HLS-CH-II}
\begin{aligned}
&-\int_M\int_M\log(\dist(x, y))\rho(x)\rho(y)\d x \d y\leq \\
& \hspace{3.5cm}  \frac{1}{\dm}\int_M \rho(x)\log\rho(x)\d x+\frac{(\dm-1)}{\dm}\int_M \log\left ( \frac{\psi_m(\theta_x)}{\theta_x}\right ) \rho(x) \d x+C_0,
\end{aligned}
\end{equation}
where $C_0\in\bbr$ is the constant depending only on $\dm$ introduced in Theorem \ref{thm:HLS-Rd}.
\end{theorem}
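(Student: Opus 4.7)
The plan is to transport the measure $\rho$ on $M$ to a measure $\tilde\rho$ on $T_\p M \cong \bbr^\dm$ via the Riemannian logarithm $\log_\p$, apply the Euclidean logarithmic HLS inequality (Theorem \ref{thm:HLS-Rd}) to $\tilde\rho$, and then translate back to $M$, using two key Riemannian geometry ingredients: (i) the exponential map at a pole on a Cartan-Hadamard manifold expands Euclidean distance, and (ii) the Jacobian upper bound from Theorem \ref{lemma:Chavel-thms-gen}.

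First I would define $\tilde\rho := (\log_\p)_\# \rho$, i.e., the pushforward density on $T_\p M$ identified with $\bbr^\dm$. The change of variables $x = \exp_\p v$, together with $\d x = J(\exp_\p)(v)\, \d v$, gives the explicit formula $\tilde\rho(v) = \rho(\exp_\p v)\, J(\exp_\p)(v)$. I would check the hypotheses of Theorem \ref{thm:HLS-Rd} for $\tilde\rho$: it is a probability density on $\bbr^\dm$, and since $|v| = \theta_{\exp_\p v}$, the pushforward identity $\int_{\bbr^\dm}\log(1+|v|^2)\tilde\rho(v)\d v = \int_M \log(1+\theta_x^2)\rho(x)\d x$ is finite by hypothesis.

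Next, I would use the fundamental Cartan-Hadamard distance-expansion property, a direct consequence of Rauch's comparison theorem (Theorem \ref{RCT}) applied with $\tilde M = T_\p M$ (flat) and $M$ of non-positive sectional curvature: for all $v, w \in T_\p M$,
\[
|v - w| \leq \dist(\exp_\p v, \exp_\p w).
\]
This yields $-\log\dist(x,y) \leq -\log|v - w|$ when $x = \exp_\p v$ and $y = \exp_\p w$, so that
\[
-\iint_{M\times M}\log \dist(x,y)\,\rho(x)\rho(y)\,\d x\,\d y \;\leq\; -\iint_{\bbr^\dm\times\bbr^\dm}\log|v-w|\,\tilde\rho(v)\tilde\rho(w)\,\d v\,\d w.
\]
Applying Theorem \ref{thm:HLS-Rd} to $\tilde\rho$, the right-hand side is bounded by $\tfrac{1}{\dm}\int_{\bbr^\dm}\tilde\rho\log\tilde\rho + C_0$.

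The remaining step is to express the entropy of $\tilde\rho$ in terms of $\rho$. A direct computation using $\tilde\rho(v) = \rho(\exp_\p v) J(\exp_\p)(v)$ and the change of variables $\d x = J(\exp_\p)(v)\,\d v$ gives
\[
\int_{\bbr^\dm}\tilde\rho\log\tilde\rho\,\d v \;=\; \int_M \rho(x)\log\rho(x)\,\d x \;+\; \int_M \rho(x)\log J(\exp_\p)(\log_\p x)\,\d x.
\]
Now I would invoke the Jacobian upper bound from Theorem \ref{lemma:Chavel-thms-gen}, namely $J(\exp_\p)(v) \leq \bigl(\psi_m(\|v\|)/\|v\|\bigr)^{\dm-1}$, together with $\|v\| = \theta_x$, to obtain
\[
\int_M \rho(x)\log J(\exp_\p)(\log_\p x)\,\d x \;\leq\; (\dm-1)\int_M \log\!\left(\frac{\psi_m(\theta_x)}{\theta_x}\right)\rho(x)\,\d x.
\]
Combining all the inequalities yields \eqref{eqn:HLS-CH-II}.

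The only delicate point is verifying that the entropy change-of-variables computation is legitimate, i.e., that $\int_M \rho \log J(\exp_\p)$ makes sense as a (possibly $+\infty$) integral; here I note that since $c_m \geq 0$ implies $\psi_m(\theta)\geq \theta$ (compare with the flat solution), the integrand $\log(\psi_m(\theta_x)/\theta_x) \geq 0$, so the integral is unambiguously defined in $[0,\infty]$ and the inequality \eqref{eqn:HLS-CH-II} is valid as stated (possibly trivial if the right-hand side is infinite). The inequality $|v-w|\leq \dist(\exp_\p v,\exp_\p w)$ and the upper Jacobian bound are the two places where the Cartan-Hadamard hypothesis \eqref{eqn:K-lowerB} enters in an essential way.
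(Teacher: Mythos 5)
Your proposal is correct and follows essentially the same route as the paper's own proof: push forward by $\log_\p$, use Rauch comparison to get $\dist(x,y)\geq|\log_\p x-\log_\p y|$, apply the Euclidean logarithmic HLS inequality to the pushforward, and convert the entropy back using the Jacobian of $\exp_\p$, bounded above via Theorem \ref{lemma:Chavel-thms-gen}. Your added checks (the moment hypothesis transferring under pushforward, and the sign of $\log(\psi_m(\theta_x)/\theta_x)$ ensuring the extra term is well defined) are sensible refinements of the same argument, not a different approach.
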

\begin{proof}
Denote by $f:M\to T_\p M$ the Riemannian logarithm map at $\p$, i.e.,
\begin{equation}
\label{eqn:Rlog}
f(x)=\log_\p x,\qquad \text{ for all }x\in M.
\end{equation}
The inverse map $f^{-1}:T_\p M\to M$ is the Riemannian exponential map
\[
f^{-1}(u)=\exp_\p u, \qquad \text{ for all } u\in T_\p M.
\]
On Cartan-Hadamard manifolds, the exponential map is a global diffeomorphism.

Take a density $\rho\in \mathcal{P}_{ac}(M)$, and its pushforward (as measures) $f_\#\rho\in \mathcal{P}_{ac}(T_\p M)$ by $f$. Then,
\begin{equation}
\label{eqn:push-rho}
f_\#\rho(u)=\rho(f^{-1}(u))|J(f^{-1})(u)|, \qquad \text{ for all } u\in T_\p M.
\end{equation}
Since $M$ has non-positive curvature, by Rauch Comparison Theorem (see Theorem \ref{RCT}) we have
\begin{equation}
\label{eqn:dist-ineq}
\dist(x, y)\geq |f(x)-f(y)|, \qquad \text{ for all } x,y \in M.
\end{equation}
Inequality \eqref{eqn:dist-ineq} can be justified as follows. Fix any two points $x,y \in M$. In the setup of Theorem \ref{RCT}, take $\tilde{M} = T_\p M$, $i$ to be the identity map, and $c$ to be the geodesic curve joining $x$ and $y$. The assumption on the curvatures $\secc$ and $\tilde{\secc}$ holds, as $\secc \leq 0 = \tilde{\secc}$. The curve $\tilde{c}$ constructed in Theorem \ref{RCT} joins $\log_\p x$ and $\log_\p y$, and hence, its length is greater than or equal to the length of the straight segment joining $\log_\p x$ and $\log_\p y$. Together with Theorem \ref{RCT}, we then infer:
\[
|\log_\p x - \log_\p y| \leq l(\tilde{c}) \leq l(c) = \dist(x,y).
\]
Note that \eqref{eqn:dist-ineq} holds indeed with equal sign for $M=\bbr^\dm$. 

By \eqref{eqn:dist-ineq}, we have
\begin{equation}
\label{eqn:HLS-deriv1}
-\int_M\int_M\log(\dist(x, y))\rho(x)\rho(y)\d x \d y\leq -\int_M\int_M\log(|f(x)-f(y)|)\rho(x)\rho(y)\d x \d y,
\end{equation}
since $\log(\cdot)$ is an increasing function on $(0,\infty)$. Then, using the definition of the pushforward measure and the logarithmic HLS inequality on $T_\p M \simeq \bbr^\dm$, we get 
\begin{align}
\label{eqn:HLS-deriv2}
 -\int_M\int_M\log(|f(x)-f(y)|)\rho(x)\rho(y)\d x \d y &= -\int_{T_\p M}\int_{T_\p M}\log(|w-z|)f_\#\rho(w)f_\#\rho(z)\d w \d z \nonumber \\
 &\leq \frac{1}{\dm}\int_{T_\p M}f_\#\rho(z)\log(f_\#\rho(z))\d z+C_0,
\end{align}
where $C_0$ is a constant that depends on $\dm$. Now, use again the property of the pushforward measure, to find
\begin{align}
\frac{1}{\dm}\int_{T_\p M}f_\#\rho(z)\log(f_\#\rho(z))\d z& +C_0=\frac{1}{\dm}\int_M \rho(x)\log(f_\#\rho(f(x))) \d x+C_0 \nonumber \\
&=\frac{1}{\dm}\int_M \rho(x)\log(\rho(x)|J(f^{-1})(f(x))| )\d x+C_0 \nonumber \\
&=\frac{1}{\dm}\int_M \rho(x)\log\rho(x)\d x+\frac{1}{\dm}\int_M \rho(x)\log|J(f^{-1})(f(x))| \d x+C_0.
\label{eqn:HLS-deriv3}
\end{align}

By combining \eqref{eqn:HLS-deriv1}, \eqref{eqn:HLS-deriv2} and \eqref{eqn:HLS-deriv3}, we obtain
\begin{align}
\label{eqn:HLS-CH}
\begin{aligned}
& -\int_M\int_M\log(\dist(x, y))\rho(x)\rho(y)\d x \d y  \leq \\
& \hspace{3.5cm} \frac{1}{\dm}\int_M \rho(x)\log\rho(x)\d x+\frac{1}{\dm}\int_M \rho(x)\log|J(f^{-1})(f(x))| \d x+C_0.
 \end{aligned}
\end{align}

Compared to the logarithmic HLS inequality on $\bbr^\dm$, inequality \eqref{eqn:HLS-CH} has an additional term on the right-hand-side, containing the Jacobian of the exponential map. Using the lower bound \eqref{eqn:K-lowerB} of the sectional curvatures we can bound this Jacobian from above by
\[
|J(f^{-1})(f(x))|\leq  \left(
\frac{\psi_m(\|f(x)\|)}{\|f(x)\|}\right)^{\dm-1} = \left(
\frac{\psi_m(\theta_x)}{\theta_x}
\right)^{\dm-1}.
\]

The HLS inequality \eqref{eqn:HLS-CH-II} now follows from the estimate above and \eqref{eqn:HLS-CH}.
\end{proof}

\begin{remark}
In particular, if $c_m$ is a constant function,  $ \displaystyle \psi_m(\theta)=\sinh(\sqrt{c_m}\theta)/\sqrt{c_m}$, and \eqref{eqn:HLS-CH-II} reduces to
\begin{equation*}
\begin{aligned}
&-\int_M\int_M\log(\dist(x, y))\rho(x)\rho(y)\d x \d y\leq \\
& \hspace{3.5cm}  \frac{1}{\dm}\int_M \rho(x)\log\rho(x)\d x+\frac{(\dm-1)}{\dm}\int_M \log\left ( \frac{\sinh(\sqrt{c_m}\theta_x)}{\sqrt{c_m}\theta_x} \right ) \rho(x) \d x+C_0.
\end{aligned}
\end{equation*}
This recovers the result in \cite[Theorem 5.2]{FePa2024}.
\end{remark}


\section{Existence of a global minimizer}
\label{sect:nonconst}

In this section we fix a pole $\p \in M$ and assume that the sectional curvatures $\mathcal{K}(x)$ are bounded below by a function $-c_m(\theta_x)$ which can grow unbounded as $\theta_x \to \infty$. We denote by $\psi_m$  the solution to the initial-value problem
\begin{equation}
\label{eqn:IVP-psim}
\begin{cases}
\psi_m''(\theta)=c_m(\theta)\psi_m(\theta), \qquad \forall \theta>0,\\[5pt]
\psi_m(0)=0,\quad \psi_m'(0)=1.
\end{cases}
\end{equation}
The main result, given by the theorem below, establishes sufficient conditions on the growth at infinity of the attractive potential (in terms of the growth of curvatures) that guarantee existence of a global energy minimizer.

\begin{theorem}[Existence of a global minimizer]
\label{thm:exist-gen} 
Let $M$ be an $\dm$-dimensional Cartan-Hadamard manifold and $\p \in M$ a fixed pole. Assume that the sectional curvatures of $M$ satisfy
\begin{equation}
\label{eqn:K-lb-nm}
-c_m(\theta_x) \leq \mathcal{K}(x;\sigma)\leq 0,
\end{equation}
for all $x \in M$ and all two-dimensional subspaces $\sigma \subset T_x M$, where $c_m(\cdot)$ is a positive, continuous and non-decreasing function. Also assume that $h:[0, \infty)\to[0, \infty)$ is a non-decreasing lower semi-continuous function which satisfies
\begin{equation}
\label{eqn:cond-h-exist}
h(\theta)\geq \phi(\theta),\qquad \text{ and } \qquad   \lim_{\theta\to\infty}\frac{\phi(\theta)}{\theta\sqrt{c_m(\theta)}}=\infty,
\end{equation}
for some convex function $\phi$.
Then there exists a global minimum of $E[\rho]$ in $\calP_\p(M)$. 
\end{theorem}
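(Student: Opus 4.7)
The plan is to apply the direct method of the calculus of variations: take a minimizing sequence in $\calP_\p(M)$, extract a weakly convergent subsequence, and show that the limit is admissible and attains the infimum. The three ingredients that drive the argument are the logarithmic HLS inequality from Theorem \ref{HLSub}, the pointwise estimates on $\psi_m$ provided by Lemma \ref{estpsi}, and the centering constraint $\int_M \log_\p x\, \rho(x)\,\d x = 0$ which, via Jensen's inequality combined with the Rauch comparison theorem, converts the interaction energy into a bound on moments of $\theta_x$.

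The first step is to show that $E$ is bounded below on $\calP_\p(M)$. Applying the HLS inequality of Theorem \ref{HLSub} to the entropy in $E[\rho]$ and rearranging yields
\[
E[\rho] \geq \iint_{M\times M}\Bigl(\tfrac12 h(\dist(x,y)) - n\log\dist(x,y)\Bigr)\rho(x)\rho(y)\,\d x\,\d y - (n-1)\int_M \log\!\Bigl(\tfrac{\psi_m(\theta_x)}{\theta_x}\Bigr)\rho(x)\,\d x - n C_0.
\]
Splitting $\tfrac12 h = \tfrac14 h + \tfrac14 h$, the bound $h\geq \phi$ together with the super-logarithmic growth of $\phi$ (which follows from $\phi(\theta)/(\theta\sqrt{c_m(\theta)})\to\infty$ and $c_m\geq c_m(0)>0$) makes $\tfrac14 h(\dist(x,y)) - n\log\dist(x,y)$ bounded below pointwise, hence after integration. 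For the second half, the Rauch comparison theorem gives $\dist(x,y)\geq |\log_\p x - \log_\p y|$, and Jensen's inequality applied in the tangent space $T_\p M$ combined with the centering constraint yields $\int_M \dist(x,y)\rho(x)\,\d x \geq \theta_y$. A second application of Jensen (using that $\phi$ is convex and, going to $+\infty$, eventually non-decreasing) delivers
\[
\tfrac14 \iint_{M\times M} h(\dist(x,y))\rho(x)\rho(y)\,\d x\,\d y \geq \tfrac14 \int_M \phi(\theta_y)\rho(y)\,\d y - C_1.
\]
Lemma \ref{estpsi} together with the monotonicity of $c_m$ then gives the upper bound $\log(\psi_m(\theta)/\theta) \leq C_2 + (1+\varepsilon)\theta\sqrt{c_m(\theta)}$ for large $\theta$, and the assumption $\phi(\theta)/(\theta\sqrt{c_m(\theta)}) \to\infty$ ensures that the function $F(\theta) := \tfrac14 \phi(\theta) - (n-1)(1+\varepsilon)\theta\sqrt{c_m(\theta)}$ is bounded below on $[0,\infty)$ and satisfies $F(\theta)\to\infty$ and $F(\theta)/\theta\to\infty$. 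Putting the estimates together,
\[
E[\rho] \geq \int_M F(\theta_x)\rho(x)\,\d x - C_3 \geq \inf F - C_3 > -\infty.
\]

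For a minimizing sequence $\{\rho_k\}\subset\calP_\p(M)$ the uniform bound on $\int_M F(\theta_x)\rho_k\,\d x$ supplies both tightness (via Chebyshev, since $F(\theta)\to\infty$) and uniform integrability of $\theta_x$ (since $F(\theta)/\theta\to\infty$). After passing to a subsequence, $\rho_k\rightharpoonup \rho_\infty$ in $\calP(M)$. The entropy $\int \rho\log\rho$ is weakly lower semicontinuous on $\calP(M)$ (with value $+\infty$ on singular measures), and since $h\circ\dist$ is non-negative and lower semicontinuous, the interaction term is weakly lower semicontinuous as well. Because $E[\rho_k]$ is bounded and the interaction is non-negative, $\int \rho_k\log\rho_k$ is uniformly bounded above, which forces $\rho_\infty\in\calP_{ac}(M)$. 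The uniform integrability of $\theta_x$ lets us pass to the limit in the centering constraint, $\int_M \log_\p x\,\rho_\infty\,\d x = \lim_k \int_M \log_\p x\,\rho_k\,\d x = 0$, so $\rho_\infty\in\calP_\p(M)$. Lower semicontinuity then gives $E[\rho_\infty]\leq \liminf_k E[\rho_k]=\inf_{\calP_\p(M)}E$, completing the argument.

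The main obstacle I anticipate is the quantitative matching between the Jacobian term $\log(\psi_m(\theta_x)/\theta_x)$ produced by the HLS inequality and the lower bound on the interaction energy delivered by Jensen and the centering constraint. The threshold $\phi(\theta)/(\theta\sqrt{c_m(\theta)})\to\infty$ is precisely what is needed to make $F(\theta)\to\infty$, and it is the sharp mirror image of the non-existence condition \eqref{nonexcondi2} in Theorem \ref{thm:nonexistence-gen}. A secondary delicate point is transferring the centering constraint through the weak limit, since $\log_\p$ is an unbounded map; this is handled by the uniform integrability that the superlinear growth of $h$ conveniently supplies.
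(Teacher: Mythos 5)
Your coercivity estimate is correct, and it actually takes a mildly different route than the paper: instead of proving convexity of $\theta\mapsto\log\bigl(\psi_m(\theta)/\theta\bigr)$ (Lemma \ref{lemma:Hconvex}) and deriving a $\calW_1$-coercivity bound (Proposition \ref{prop:E-W1-gen}) followed by the two-step restriction to Wasserstein balls, you combine the HLS inequality of Theorem \ref{HLSub} with Jensen applied to the vector-valued logarithm (giving $\int_M \dist(x,y)\rho(x)\,\d x\geq\theta_y$ from the centering constraint), Jensen for the convex minorant $\phi$, and Lemma \ref{estpsi} to dominate the Jacobian term by $(\dm-1)(1+\epsilon)\theta\sqrt{c_m(\theta)}$, arriving at $E[\rho]\geq\int_M F(\theta_x)\rho(x)\,\d x-C$ with $F$ superlinear. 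This gives tightness and uniform integrability of $\theta_x$ directly, so the passage of the centering constraint to the limit is fine; up to this point the argument is sound and arguably cleaner than the paper's.

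The genuine gap is the lower semicontinuity step. You assert that $\int_M\rho\log\rho\,\d x$ is weakly lower semicontinuous on $\calP(M)$ (with value $+\infty$ on singular measures) and that a uniform upper bound on the entropies forces the weak limit to be absolutely continuous. Both claims are false as blanket statements, already on $\bbr^\dm$: spreading a vanishing fraction of mass thinly over an enormous ball costs nothing for narrow convergence but drives the entropy arbitrarily negative, so one can produce $\rho_k\rightharpoonup\delta_0$ with entropies bounded above. On the manifolds considered here this failure mode is even cheaper, since $|B_R(\p)|$ grows like $\exp\bigl(c\int_0^R\sqrt{c_M(t)}\,\d t\bigr)$ or faster. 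This is precisely why the paper does not quote a generic lsc fact but instead decomposes $\int_M\rho_k\log\rho_k=\int f_\#\rho_k\log f_\#\rho_k-\int_M\rho_k\log|J(f^{-1})(f(x))|\,\d x$, uses Euclidean lsc of the entropy for the pushforwards, and proves \emph{convergence} of the Jacobian term (Lemma \ref{L-Z-2}) by a truncation/uniform-integrability argument resting on the uniform bound $\int_M\phi(\theta_x)\rho_k\,\d x\leq U$ together with $\phi(\theta)/\log\bigl(\psi_m(\theta)/\theta\bigr)\to\infty$; note that the Jacobian weight enters the manifold entropy with a minus sign, so mere lower semicontinuity of that term goes the wrong way and genuine convergence is needed. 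Your bound $\int_M F(\theta_x)\rho_k\,\d x\leq C$, with $F(\theta)/\bigl(\theta\sqrt{c_m(\theta)}\bigr)\to\infty$, supplies exactly the uniform integrability required to run this argument (and also to exclude singular limits), but you never carry it out and instead replace it by an incorrect general claim — and this is where the main analytical work of the existence proof actually lies.
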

 
 \begin{remark}
 \label{rmk:weaker-cond}
 \normalfont
 In the proof of Theorem \ref{thm:exist-gen} we will use in fact a weaker assumption on the growth at infinity of $h$, namely that
 \begin{equation}
 \label{eqn:phi-relaxed}
h(\theta)\geq \phi(\theta),\qquad \text{ and } \qquad  \lim_{\theta\to\infty}\frac{\phi(\theta)}{\log\left(\frac{\psi_m(\theta)}{\theta}\right)}=\infty,
 \end{equation}
for some convex function $\phi$, where $\psi_m$ is the solution of the IVP \eqref{eqn:IVP-psim}. Nevertheless, we chose to present Theorem \ref{thm:exist-gen} with the explicit growth at infinity given in terms of the known function $c_m(\cdot)$, rather than using the unknown solution $\psi_m(\cdot)$ of the IVP.
 
To see that \eqref{eqn:phi-relaxed} is indeed weaker than \eqref{eqn:cond-h-exist}, one can argue as following. By Lemma \ref{estpsi}, it holds that
 \[
 \psi_m(\theta)\leq \psi_m(\theta_0)\exp\left((1+\epsilon)\int_{\theta_0}^\theta\sqrt{c_m(t)}\, \d t\right),\qquad \forall \theta > \theta_0,
 \]
for some $\epsilon>0$ and $\theta_0>0$. As $c_m$ is non-decreasing, we have
\[
\int_{\theta_0}^\theta\sqrt{c_m(t)}\, \d t\leq \theta\sqrt{c_m(\theta)}, \qquad \forall \theta > \theta_0.
\]
Then, we find 
\begin{equation}
\label{ineq:log}
\log\left(\frac{\psi_m(\theta)}{\theta}\right)\leq \log\left(\psi_m(\theta_0)\right)+(1+\epsilon)\theta\sqrt{c_m(\theta)}-\log\theta, \qquad \forall \theta>\theta_0.
\end{equation}

We also note that $\log \left(\frac{\psi_m(\theta)}{\theta} \right) \to \infty$ as $\theta \to \infty$; this follows from Lemma \ref{lem:linearlog}. Now, divide by $\log\left(\frac{\psi_m(\theta)}{\theta}\right)$ in \eqref{ineq:log}, and find that
\[
1\leq\frac{(1+\epsilon)\theta\sqrt{c_m(\theta)}}{\log\left(\frac{\psi_m(\theta)}{\theta}\right)}+\frac{\log(\psi_m(\theta_0))}{\log\left(\frac{\psi_m(\theta)}{\theta}\right)},
\]
for large $\theta$. Since the second term in the right-hand-side above vanishes in the limit $\theta \to \infty$, we have
\[
1\leq \lim_{\theta\to\infty}\frac{(1+\epsilon)\theta\sqrt{c_m(\theta)}}{\log\left(\frac{\psi_m(\theta)}{\theta}\right)}.
\]
Therefore, if $\phi$ satisfies the asymptotic condition in \eqref{eqn:cond-h-exist}, then it also satisfies the one in \eqref{eqn:phi-relaxed}.
 \end{remark}

We present the proof of Theorem \ref{thm:exist-gen} after we establish several important preparatory results.
\begin{lemma}[\cite{FePa2024}]
\label{Lem6.1}
Let $M$ be a Cartan-Hadamard manifold, $\p \in M$ a fixed pole and $\rho \in \calP_\p(M)$. Then we have
\[
(2-\sqrt{2})\calW_1(\rho, \delta_\p)\leq \iint_{M\times M} \dist(x, y)\rho(x)\rho(y)\d x\d y\leq 2\calW_1(\rho, \delta_\p).
\]
Here, $\delta_\p$ denotes the Dirac delta measure centred at the pole $\p$.
\end{lemma}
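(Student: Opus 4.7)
The plan is to treat the two inequalities separately. The upper bound is immediate: the triangle inequality $\dist(x,y) \leq \dist(x,\p) + \dist(\p,y)$, integrated against $\rho \otimes \rho$, yields $2\int_M \dist(x,\p)\rho(x)\d x$, which equals $2\calW_1(\rho, \delta_\p)$ because the only transport plan between $\rho$ and a Dirac mass is the product measure. Thus the upper bound does not even use the center-of-mass condition.

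For the lower bound, my plan is to transfer the problem to the tangent space $T_\p M$ via the Riemannian logarithm and then exploit the defining condition of $\calP_\p(M)$. Exactly as in the proof of Theorem \ref{HLSub}, Rauch comparison applied to the non-positively curved manifold $M$ yields $\dist(x,y) \geq |\log_\p x - \log_\p y|$ for all $x,y \in M$, so it suffices to bound the transplanted integral $\iint_{M \times M} |\log_\p x - \log_\p y|\,\rho(x)\rho(y)\d x\d y$ from below by $\calW_1(\rho,\delta_\p)$.

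The key step is to use the identity $\int_M \log_\p y\,\rho(y)\d y = 0$, which is precisely the condition $\rho \in \calP_\p(M)$. For each fixed $x$ this identity rewrites as $\log_\p x = \int_M (\log_\p x - \log_\p y)\rho(y)\d y$, and Jensen's inequality (passing the tangent-space norm under the integral) then yields $\dist(x,\p) = |\log_\p x| \leq \int_M |\log_\p x - \log_\p y|\rho(y)\d y$. Integrating once more against $\rho(x)\d x$ and combining with the Rauch estimate gives
\[
\calW_1(\rho,\delta_\p) = \int_M \dist(x,\p)\rho(x)\d x \leq \iint_{M\times M} \dist(x,y)\rho(x)\rho(y)\d x\d y,
\]
which in fact produces the strictly sharper constant $1 \geq 2-\sqrt{2}$.

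I do not anticipate any real obstacle: both Rauch and Jensen are elementary, and the mean-zero hypothesis plugs in exactly via the identity $\int (\log_\p x - \log_\p y)\rho(y)\d y = \log_\p x$. The only formal point to verify is that the pushforward $(\log_\p)_\#\rho$ genuinely has vanishing first moment on $T_\p M$, which is just the definition of $\calP_\p(M)$. The constant $2-\sqrt{2}$ in the statement therefore appears to be non-sharp for this line of argument, but since it is only used as a qualitative two-sided comparison between the two quantities, the slack is immaterial for the sequel.
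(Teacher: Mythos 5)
Your proof is correct. The upper bound is exactly the elementary triangle-inequality argument (and you rightly observe it needs no centering, and that $\Pi(\rho,\delta_\p)$ contains only the product plan, so $\calW_1(\rho,\delta_\p)=\int_M \dist(x,\p)\rho(x)\d x$; integrability is guaranteed by $\rho\in\calP_1(M)$). For the lower bound, the paper does not reproduce an argument but defers to \cite[Lemma 6.1]{FePa2024}, whose proof is likewise built on Rauch comparison ``and some simple calculations and estimates''; the constant $2-\sqrt{2}$ there comes from a more hands-on geometric estimate in the flattened picture (of the type $\dist(x,y)\ge |\theta_x-\theta_y\cos\angle(x\p y)|$ followed by case analysis), which is also the route this paper takes later in Proposition \ref{prop:E-W1-gen}. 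Your route shares the first step — Rauch gives $\dist(x,y)\ge |\log_\p x-\log_\p y|$ — but then replaces the case analysis by a one-line use of the defining condition of $\calP_\p(M)$: writing $\log_\p x=\int_M(\log_\p x-\log_\p y)\rho(y)\d y$ and applying the vector-valued triangle (Jensen) inequality gives $\dist(x,\p)=|\log_\p x|\le\int_M \dist(x,y)\rho(y)\d y$, hence $\calW_1(\rho,\delta_\p)\le\iint_{M\times M}\dist(x,y)\rho(x)\rho(y)\d x\d y$ with constant $1$ rather than $2-\sqrt{2}$ (and $1$ is optimal, as two equal masses placed symmetrically about $\p$ show). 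Since the lemma is only used as a two-sided comparability statement, the sharper constant changes nothing downstream except marginally improving the constants in Proposition \ref{prop:E-W1-gen}; in short, your argument is a cleaner and quantitatively stronger substitute for the cited proof.
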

\begin{proof} This result was stated and proved in \cite[Lemma 6.1]{FePa2024}. The proof is based on Rauch comparison theorem and some simple calculations and estimates. We also note that this result does not require any information on the curvatures of $M$, and it applies to general Cartan-Hadamard manifolds.
\end{proof}

\begin{lemma}
\label{lemma:Hconvex}
Let $\psi_m$ be the solution to \eqref{eqn:IVP-psim}, where $c_m(\cdot)\geq0$ is a continuous non-decreasing function on $[0,\infty)$. Then
\begin{equation}
\label{eqn:H}
H(\theta):=\log\left(\frac{\psi_m(\theta)}{\theta}\right)
\end{equation}
is a non-decreasing convex function on $(0,\infty)$.
\end{lemma}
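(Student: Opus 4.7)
The strategy is to write $H(\theta) = \log \alpha(\theta)$ with $\alpha(\theta) := \psi_m(\theta)/\theta$, so that monotonicity of $H$ is equivalent to $\alpha' \geq 0$ and convexity of $H$ is equivalent to the log-convexity of $\alpha$, i.e., to $\alpha \alpha'' \geq (\alpha')^2$ (note $\alpha > 0$). For the monotonicity step, I would introduce $g(\theta) := \theta \psi_m'(\theta) - \psi_m(\theta)$. Then $g(0) = 0$ and
\[
g'(\theta) \;=\; \theta \psi_m''(\theta) \;=\; \theta\, c_m(\theta)\, \psi_m(\theta) \;\geq\; 0,
\]
so $g \geq 0$ on $[0,\infty)$, and hence $\alpha'(\theta) = g(\theta)/\theta^2 \geq 0$. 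This gives monotonicity of $H$. As a byproduct, $\alpha(0^+) = \psi_m'(0) = 1$ yields the auxiliary bound $\psi_m(\theta) \geq \theta$ for all $\theta \geq 0$, which is needed below.

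For convexity, a direct calculation using $\psi_m'' = c_m \psi_m$ produces the identity
\[
\alpha(\theta)\alpha''(\theta) - \alpha'(\theta)^2 \;=\; \frac{\psi_m(\theta)^2\bigl(1 + c_m(\theta)\theta^2\bigr) - \theta^2 \psi_m'(\theta)^2}{\theta^4},
\]
so convexity reduces to the single inequality
\[
\theta^2 \psi_m'(\theta)^2 \;\leq\; \psi_m(\theta)^2 \bigl(1 + c_m(\theta)\theta^2\bigr).
\]
To prove this, the plan is to multiply \eqref{eqn:IVP-psim} by $2\psi_m'$ to obtain $(\psi_m'^2)' = c_m (\psi_m^2)'$, integrate on $[0,\theta]$, and apply Riemann--Stieltjes integration by parts (valid since $\psi_m^2 \in C^1$ and $c_m$ is continuous and non-decreasing, hence of locally bounded variation), yielding
\[
\psi_m'(\theta)^2 \;=\; 1 + c_m(\theta)\psi_m(\theta)^2 - \int_0^\theta \psi_m(s)^2\, dc_m(s).
\]
Multiplying by $\theta^2$ and rearranging, the required inequality becomes
\[
\psi_m(\theta)^2 - \theta^2 + \theta^2 \int_0^\theta \psi_m(s)^2\, dc_m(s) \;\geq\; 0,
\]
which holds because the first summand is $\geq 0$ by $\psi_m \geq \theta$, and the integral is $\geq 0$ since $dc_m$ is a positive Lebesgue--Stieltjes measure.

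The only real subtlety lies in the regularity of $c_m$: since $c_m$ is only assumed continuous and non-decreasing, and not $C^1$, I interpret $dc_m$ as the positive Lebesgue--Stieltjes measure it induces, and use the Stieltjes form of integration by parts (available here as the other factor $\psi_m^2$ is $C^1$). Readers who prefer to avoid Stieltjes calculus can first establish the identity for $c_m \in C^1$, where $dc_m = c_m'(s)\,ds \geq 0$, and then recover the general case by approximating $c_m$ locally uniformly by smooth non-decreasing functions and invoking continuous dependence of solutions of the linear IVP \eqref{eqn:IVP-psim} on their coefficients. Apart from this regularity point, the proof collapses to one integration-by-parts identity combined with the elementary comparison $\psi_m(\theta) \geq \theta$.
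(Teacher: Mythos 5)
Your proof is correct, but it follows a genuinely different route from the paper's, especially for convexity. The paper works with the Riccati variable $f=\psi_m/\psi_m'$: monotonicity of $H$ comes from $f'=1-c_m f^2\leq 1$, hence $f(\theta)\leq\theta$; convexity is proved pointwise at each $\theta_0$ by freezing the coefficient at its maximum value $c_m(\theta_0)$ on $[0,\theta_0]$, comparing $f$ with the explicit constant-coefficient solution $\tanh\bigl(\sqrt{c_m(\theta_0)}\,\theta\bigr)/\sqrt{c_m(\theta_0)}$, and concluding $H''(\theta_0)>0$ from $\sinh x>x$. You instead get monotonicity from the Wronskian-type quantity $g=\theta\psi_m'-\psi_m$ (equivalent in spirit, and it also hands you the comparison $\psi_m\geq\theta$), and for convexity you bypass the ODE-comparison entirely: multiplying the equation by $2\psi_m'$ gives $(\psi_m'^2)'=c_m(\psi_m^2)'$, and Stieltjes integration by parts yields the energy identity $\psi_m'^2=1+c_m\psi_m^2-\int_0^\theta\psi_m^2\,\dd c_m$, which reduces $H''\geq 0$ (your identity for $\alpha\alpha''-\alpha'^2$ checks out) to $\psi_m^2-\theta^2+\theta^2\int_0^\theta\psi_m^2\,\dd c_m\geq 0$, clear from $\psi_m\geq\theta$ and $\dd c_m\geq 0$. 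What each approach buys: yours uses the monotonicity of $c_m$ only through positivity of the measure $\dd c_m$, avoids explicit hyperbolic functions and the freezing argument, and gives global (non-strict) convexity in one stroke, at the mild cost of Riemann--Stieltjes bookkeeping (which you correctly flag and which is legitimate since $\psi_m^2$ is $C^1$ and $c_m$ is continuous and locally of bounded variation, or can be handled by smooth approximation); the paper's method stays within elementary ODE comparison and delivers the slightly stronger conclusion $H''>0$ at every point.
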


\begin{proof} Note first that by Remark \ref{rmk:psimM}, $\psi_m(\theta) \geq 0$ and $\psi_m'(\theta) \geq 1$ for all $\theta \geq 0$.
By direct calculation we get
\begin{equation}
\label{eqn:Hprime}
\frac{\d}{\d\theta}\log\left(\frac{\psi_m(\theta)}{\theta}\right)=\frac{\psi_m'(\theta)}{\psi_m(\theta)}-\frac{1}{\theta}.
\end{equation}
Set 
\[
f(\theta)=\frac{\psi_m(\theta)}{\psi_m'(\theta)}. 
\]
Then,
\begin{equation}
\label{eqn:fprime}
f'(\theta)=1-c_m(\theta)f(\theta)^2,
\end{equation}
and hence,
\[
f'(\theta)\leq1.
\]
This implies (also note that $f(0)=0$) that $f(\theta)\leq\theta$ for all $\theta>0$ and hence, by \eqref{eqn:Hprime}, we find $H'(\theta) \geq 0$. This shows $H$ is non-decreasing.

To show that  $H(\theta)$ is a convex function on $\theta>0$, we will show that its second derivative is positive. Compute
\begin{equation}
\label{eqn:Hdprime}
\begin{aligned}
\frac{\d^2}{\d\theta^2}\log\left(\frac{\psi_m(\theta)}{\theta}\right)
&=\frac{\psi_m''(\theta)\psi_m(\theta)-\psi_m'(\theta)^2}{\psi_m(\theta)^2}+\frac{1}{\theta^2}\\[2pt]
&=c_m(\theta)-\left(\frac{\psi_m'(\theta)}{\psi_m(\theta)}\right)^2+\frac{1}{\theta^2}.
\end{aligned}
\end{equation}
Fix $\theta_0>0$. Since $c_m$ is a continuous non-decreasing function, we have
\[
c_m(\theta_0)=\max_{0\leq \theta\leq \theta_0}c_m(\theta).
\]
Using the function $f$ again, we have by \eqref{eqn:fprime}:
\[
f'(\theta)\geq 1-c_m(\theta_0)f(\theta)^2, \qquad \text{ for } 0<\theta\leq \theta_0.
\]
Define $\tilde{f}(\theta)$ as the solution to
\[
\begin{cases}
\tilde{f}'(\theta)=1-c_m(\theta_0)\tilde{f}(\theta)^2,\qquad 0<\theta\leq \theta_0, \\[2pt]
\tilde{f}(0)=0.
\end{cases}
\]
By ODE comparison we then have
\begin{equation*}
f(\theta) \geq \tilde{f}(\theta)=\frac{\tanh(\sqrt{c_m(\theta_0)}\theta)}{\sqrt{c_m(\theta_0)}}, \qquad\forall~0<\theta\leq \theta_0.
\end{equation*}
Using this in \eqref{eqn:Hdprime} we then find
\begin{align*}
\frac{\d^2}{\d\theta^2}\log\left(\frac{\psi_m(\theta)}{\theta}\right)\bigg|_{\theta=\theta_0}&=c_m(\theta_0)-\left(\frac{\psi_m'(\theta_0)}{\psi_m(\theta_0)}\right)^2+\frac{1}{\theta^2_0}\\
&\geq c_m(\theta_0)-c_m(\theta_0)\coth^2(\sqrt{c_m(\theta_0)}\theta)+\frac{1}{\theta_0^2}\\
&=\frac{1}{\theta_0^2}-\frac{c_m(\theta_0)}{\sinh^2(\sqrt{c_m(\theta_0)}\theta_0)}\\
&>\frac{1}{\theta_0^2}-\frac{c_m(\theta_0)}{c_m(\theta_0)\theta_0^2}=0.
\end{align*}
where we used $\sinh x> x$ for all $x>0$ in the last inequality. This shows the convexity of the function $H$ on $(0,\infty)$.
\end{proof}

\begin{remark}
\label{rmk:Heven}
Let $\tilde{H}$ be the even extension of the function $H$ on $\bbr$:
\[
\tilde{H}(\theta)=\begin{cases}
H(\theta)\qquad&\text{ for }\theta>0, \\
0\qquad&\text{ for }\theta=0, \\
H(-\theta)\qquad&\text{ for }\theta<0.
\end{cases}
\]
It can be shown by direct calculations (not shown here) using L'H\^{o}pital theorem that $\tilde{H}$ is second order differentiable at $0$, with $\tilde{H}(0)=0$, $\tilde{H}'(0)=0$, and $\tilde{H}''(0)=\frac{c_m(0)}{3}>0$. Consequently, $\tilde{H}$ is a globally convex function on $\bbr$.
\end{remark}

\begin{proposition}
\label{prop:E-W1-gen}
Let $M$ be an $\dm$-dimensional Cartan-Hadamard manifold whose sectional curvatures satisfy \eqref{eqn:K-lb-nm}, with $\p \in M$ a fixed pole, and $c_m(\cdot)$ a positive, continuous and non-decreasing function. Also assume that $h:[0,\infty) \to [0,\infty)$ is lower semi-continuous and satisfies \eqref{eqn:phi-relaxed}. Then there exist constants $C_0$ and $\calC$ such that
\begin{equation*}
E[\rho] \geq-C_0 \dm+\frac{\mathcal{C}}{2}+\frac{2-\sqrt{2}}{2}\calW_1(\rho, \delta_\p), \qquad \text{ for all } \rho \in \calP_\p(M).
\end{equation*}
Specifically, $C_0$ depends on $\dm$, while $\calC$ depends on $h$, $c_m$ and $\dm$.
\end{proposition}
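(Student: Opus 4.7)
The plan is to bound $E[\rho]$ from below by combining three ingredients: the logarithmic HLS inequality of Theorem~\ref{HLSub}, a pointwise lower bound in $\theta$ that exploits the growth assumption \eqref{eqn:phi-relaxed}, and a Jensen-type symmetrization that uses the zero-mean condition built into $\calP_\p(M)$.

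First, Theorem~\ref{HLSub} applied to $\rho$ and substituted into $E[\rho]$ yields, with $H(\theta):=\log(\psi_m(\theta)/\theta)$,
\begin{equation*}
E[\rho]\geq \iint_{M\times M}\Bigl[-\dm\log d(x,y)+\tfrac{1}{2} h(d(x,y))\Bigr]\rho(x)\rho(y)\,\d x\,\d y-(\dm-1)\int_M H(\theta_x)\rho(x)\,\d x-\dm C_0.
\end{equation*}
Using $h\geq \phi$, I would then establish the pointwise bound
\begin{equation*}
\tfrac{1}{2}\phi(\theta)-\dm\log\theta \;\geq\; (\dm-1)H(\theta)+\tfrac{1}{2}\theta+\tfrac{\mathcal{C}}{2},\qquad \forall\,\theta>0,
\end{equation*}
for an appropriate constant $\mathcal{C}$. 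Rewritten as $\phi(\theta)-2(\dm-1)H(\theta)-\theta-2\dm\log\theta\geq \mathcal{C}$, this is a continuity/asymptotic check: the left-hand side tends to $+\infty$ as $\theta\to 0^+$ (the $-\log\theta$ term dominates since $H,\phi,\theta$ stay bounded there) and as $\theta\to\infty$ (by \eqref{eqn:phi-relaxed} together with Lemma~\ref{lem:linearlog}, $\phi$ grows strictly faster than each of $H(\theta)$, $\theta$, and $\log\theta$), so it is bounded below on $(0,\infty)$.

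The crucial step is to absorb the remainder $(\dm-1)\int H(\theta_x)\rho\,\d x$ by establishing
\begin{equation*}
\iint_{M\times M} H(d(x,y))\,\rho(x)\rho(y)\,\d x\,\d y \;\geq\; \int_M H(\theta_x)\,\rho(x)\,\d x.
\end{equation*}
My plan is to push $\rho$ forward through $f:=\log_\p$ to a probability measure $\mu:=f_\#\rho$ on $T_\p M\simeq\bbr^\dm$; the defining moment condition of $\calP_\p(M)$ says precisely that $\int_{T_\p M} v\,\mu(v)\,\d v=0$. The radial extension $\tilde H(u):=H(|u|)$ is convex on $\bbr^\dm$ because $H$ is convex and non-decreasing on $[0,\infty)$ with $H(0)=0$ (Lemma~\ref{lemma:Hconvex} and Remark~\ref{rmk:Heven}). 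Jensen's inequality applied to the $v$-integral then gives, for each $u\in T_\p M$,
\begin{equation*}
\int_{T_\p M} \tilde H(u-v)\,\mu(v)\,\d v \;\geq\; \tilde H\!\left(u-\int_{T_\p M} v\,\mu(v)\,\d v\right) = \tilde H(u).
\end{equation*}
Integrating against $\mu(u)$ and returning to $M$ produces $\iint H(|f(x)-f(y)|)\rho(x)\rho(y)\,\d x\,\d y\geq\int H(\theta_x)\rho(x)\,\d x$, and Rauch's theorem (Theorem~\ref{RCT}) combined with the monotonicity of $H$ upgrades the integrand on the left from $H(|f(x)-f(y)|)$ to $H(d(x,y))$.

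Chaining the three estimates and invoking Lemma~\ref{Lem6.1} to pass from $\iint d(x,y)\rho(x)\rho(y)\,\d x\,\d y$ to $\calW_1(\rho,\delta_\p)$ then yields $E[\rho]\geq \tfrac{2-\sqrt{2}}{2}\calW_1(\rho,\delta_\p)+\tfrac{\mathcal{C}}{2}-\dm C_0$, which is the claim. I expect the main obstacle to be identifying the Jensen-plus-Rauch argument of the third step; without exploiting the zero-mean hypothesis there is no obvious way to control the HLS remainder $\int H(\theta_x)\rho\,\d x$, which can be arbitrarily large for densities with heavy radial tails and is not controlled by the first moment of $\rho$ alone.
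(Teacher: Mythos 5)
Your proposal is correct and takes essentially the same route as the paper: the HLS inequality of Theorem \ref{HLSub}, the key absorption inequality $\iint_{M\times M} H(\dist(x,y))\rho(x)\rho(y)\,\d x\,\d y \geq \int_M H(\theta_x)\rho(x)\,\d x$ obtained from the zero-mean condition in $\calP_\p(M)$ via Jensen, a pointwise lower bound on $h$ coming from \eqref{eqn:phi-relaxed} together with Lemma \ref{lem:linearlog}, and finally Lemma \ref{Lem6.1} to convert $\iint \dist(x,y)\rho(x)\rho(y)\,\d x\,\d y$ into $\calW_1(\rho,\delta_\p)$. The only (minor) difference is in the Jensen step: you apply Jensen directly in $T_\p M\simeq\bbr^\dm$ to the convex radial extension $u\mapsto H(|u|)$ (convex since $H$ is convex and non-decreasing, by Lemma \ref{lemma:Hconvex} and Remark \ref{rmk:Heven}), whereas the paper first projects $|\log_\p x-\log_\p y|$ onto the radial direction via the law of cosines and then uses one-dimensional Jensen for the even extension $\tilde H$ — the two arguments are equivalent.
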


\begin{proof}
By Rauch comparison theorem (see \eqref{eqn:dist-ineq}) and the law of cosines, for any two points $x,y \in M$, we have
\begin{align*}
\dist(x, y)^2 &\geq |\log_\p x- \log_\p y |^2 \\
& =  \theta_x^2+\theta_y^2-2\theta_x\theta_y\cos\angle(x\p y) \\
& \geq(\theta_x-\theta_y\cos\angle(x\p y))^2.
\end{align*}
This yields
\begin{equation}
\label{eqn:dxy-ineq2}
\dist(x, y)\geq  |\theta_x-\theta_y\cos\angle(x\p y)|.
\end{equation}

Take any $\rho \in \calP_\p(M)$. Consider the function $H$ defined in \eqref{eqn:H} and its even extension (see Remark \ref{rmk:Heven}). Since $H$ is non-decreasing (by Lemma \ref{lemma:Hconvex}), along with \eqref{eqn:dxy-ineq2}, one can estimate
\begin{equation}
\label{eqn:iint-ineq1}
\begin{aligned}
\iint_{M \times M} H(\dist(x, y))\rho(x)\rho(y)\d x \d y &
\geq\iint_{M \times M}  H(|\theta_x-\theta_y\cos\angle(x\p y)|)\rho(x)\rho(y)\d x \d y\\
&=\iint_{M \times M}  \tilde{H}(\theta_x-\theta_y\cos\angle(x\p y))\rho(x)\rho(y)\d x \d y.
\end{aligned}
\end{equation}

Since $\rho \in \calP_\p(M)$, we have
\[
\int_M \log_\p y\rho(y)\d y=0.
\]
In this equation, take the inner product with $\log_\p x$ for some arbitrary $x\neq \p$, to get
\[
\int_M \theta_x\theta_y\cos\angle(x\p y) \rho(y)\d y=0,
\]
which is equivalent to
\begin{equation}
\label{eqn:int-thetay}
\int_M \theta_y\cos\angle(x\p y)\rho(y) \d y=0.
\end{equation}

Now, use the convexity of $\tilde{H}$ and Jensen's inequality, together with \eqref{eqn:int-thetay}, to obtain
\begin{align*}
\int_M \tilde{H}(\theta_x-\theta_y\cos\angle(x\p y))\rho(y)\d y & \geq \tilde{H}\left(\int_M (\theta_x-\theta_y\cos\angle(x\p y))\rho(y)\d y \right) \\[2pt]
& = \tilde{H}(\theta_x),
\end{align*}
for any $x \neq \p$. By integrating the inequality above with respect to $\rho(x)$ we then find
\begin{equation}
\label{eqn:iint-ineq2}
\iint_{M \times M} \tilde{H}(\theta_x-\theta_y\cos\angle(x\p y))\rho(x)\rho(y)\d x \d y\geq \int_M \tilde{H}(\theta_x)\rho(x)\d x.
\end{equation}

Finally, combine \eqref{eqn:iint-ineq1} with \eqref{eqn:iint-ineq2} and use the expression of $H$ in \eqref{eqn:H}, to get
\begin{equation}
\label{ineq:dint-int}
\iint_{M \times M} \log\left(\frac{\psi_m(\dist(x, y))}{\dist(x, y)}\right)\rho(x)\rho(y)\d x \d y\geq\int_M \log\left(\frac{\psi_m(\theta_x)}{\theta_x}\right)\rho(x)\d x.
\end{equation}
Using \eqref{ineq:dint-int} in Theorem \ref{HLSub} (see \eqref{eqn:HLS-CH-II}) we then reach
\begin{equation}
\label{eqn:ineq-key-gen}
\begin{aligned}
&-\iint_{M\times M}\log(\dist(x, y))\rho(x)\rho(y)\d x \d y\\
&\hspace{2cm}  \leq  \frac{1}{\dm}\int_M \rho(x)\log\rho(x)\d x+\frac{(\dm-1)}{\dm}\iint_{M \times M} \log\left(\frac{\psi_m(\dist(x, y))}{\dist(x, y)}\right)\rho(x)\rho(y)\d x \d y+C_0.
\end{aligned}
\end{equation}

Since $h$ satisfies \eqref{eqn:phi-relaxed}, we have
\[
\lim_{\theta\to\infty}\left(\frac{h(\theta)}{2}-2(\dm-1)\log\left(\frac{\psi_m(\theta)}{\theta}\right)\right)=\infty.
\]
On the other hand, by Lemma \ref{lem:linearlog} we infer that $\log\left(\frac{\psi_m(\theta)}{\theta}\right) $ grows at least linearly at infinity. Then, from \eqref{eqn:phi-relaxed} we conclude that $\phi$ (and hence $h$) have {\em superlinear} growth at infinity, which yields
\[
\lim_{\theta\to\infty}\left(\frac{h(\theta)}{2}-2\dm\log\theta-\theta\right)=\infty.
\]
By combining the two limits above we get
\[
\lim_{\theta\to\infty}\left(h(\theta)-2(\dm-1)\log\left(\frac{\psi_m(\theta)}{\theta}\right)-2\dm\log\theta-\theta\right)=\infty.
\]
Now we note the behaviour at zero of this expression, given by
\[
\lim_{\theta\to0^+}\left(h(\theta)-2(\dm-1)\log\left(\frac{\psi_m(\theta)}{\theta}\right)-2\dm\log\theta-\theta\right)=\infty,
\]
where we used that $h$ is bounded from below and $\psi_m(\theta) \sim \theta$ near $\theta =0$. By the lower semi-continuity of $h$, we can then conclude that there exists a constant $\mathcal{C}$ such that
\begin{equation}
\label{eqn:h-lb1}
h(\theta) - 2(\dm-1)\log\left(\frac{\psi_m(\theta)}{\theta}\right)- 2\dm\log\theta- \theta \geq \mathcal{C}, \qquad \text{ for all } \theta>0.
\end{equation}

Finally, from \eqref{eqn:h-lb1} and \eqref{eqn:ineq-key-gen}, we get 
\begin{align*}
E[\rho]&= \int_M\rho(x)\log\rho(x)\d x + \frac{1}{2} \iint_{M \times M}  h (\dist(x, y))\rho(x)\rho(y)\d x\d y \\
& \geq \int_M\rho(x)\log\rho(x)\d x+(\dm-1)\iint_{M \times M}\log\left(\frac{\psi_m(\dist(x,y))}{\dist(x, y)}\right)\rho(x)\rho(y)\d x\d y \\
& \quad +\dm\iint_{M \times M} \log(\dist(x, y))\rho(x)\rho(y)\d x\d y +\frac{1}{2}\iint_{M \times M} \dist(x, y)\rho(x)\rho(y)\d x\d y+\frac{\mathcal{C}}{2}\\
&\geq -C_0 \dm +\frac{\mathcal{C}}{2}+\frac{1}{2}\iint_{M \times M}\dist(x, y)\rho(x)\rho(y)\d x\d y.
\end{align*}
The conclusion now follows from the estimate above and Lemma \ref{Lem6.1}.
\end{proof}

\begin{lemma}\label{L-Z-1}
(i) Assume $M$ is a Cartan-Hadamard manifold and $h:[0,\infty) \to [0,\infty)$ is lower semi-continuous. Then, the interaction energy is lower semi-continuous with respect to weak convergence, i.e., if $\rho_k \rightharpoonup \rho_0$ weakly (as measures) as $k \to \infty$, it holds that
\[
\iint_{M\times M}h(\dist(x, y)) \rho_0(x)\rho_0(y)\d x\d y\leq \liminf_{k \to \infty} \iint_{M\times M}h(\dist(x, y)) \rho_k(x)\rho_k(y)\d x\d y.
\]
(ii) Assume in addition that $M$ and $h$ satisfy the assumptions in Proposition \ref{prop:E-W1-gen}, with $\p$ a fixed pole in $M$, and suppose $\{\rho_k\}_{k\ge 1} \subset \calP_{\p}(M)$ is a sequence of probability densities such that $E[\rho_k]$ is uniformly bounded from above. Then, 
\[
\iint_{M \times M} h(\dist(x, y))\rho_k(x)\rho_k(y)\d x\d y
\]
is also uniformly bounded from above.
\end{lemma}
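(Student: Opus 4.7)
The plan is to address the two parts separately.

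For part (i), I would use a monotone approximation of $h$ combined with weak convergence of product measures. Since $h\colon[0,\infty)\to[0,\infty)$ is non-negative and lower semi-continuous, and $\dist$ is continuous, the map $(x,y)\mapsto h(\dist(x,y))$ is non-negative and lower semi-continuous on $M\times M$, and hence can be written as the pointwise supremum of an increasing sequence $\{g_j\}$ of non-negative bounded continuous functions on $M\times M$. The weak convergence $\rho_k\rightharpoonup\rho_0$ on $M$ extends to $\rho_k\otimes\rho_k\rightharpoonup\rho_0\otimes\rho_0$ on $M\times M$ (by a standard density argument on tensor products of bounded continuous functions). Hence, for each fixed $j$,
$\iint g_j\,\rho_k(x)\rho_k(y)\,\d x\d y \to \iint g_j\,\rho_0(x)\rho_0(y)\,\d x\d y$ as $k\to\infty$, and since $g_j\le h(\dist(\cdot,\cdot))$ one obtains $\liminf_k\iint h(\dist)\rho_k\rho_k\ge\iint g_j\,\rho_0\rho_0$. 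Letting $j\to\infty$ and applying monotone convergence closes part (i).

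For part (ii), the key idea is that the asymptotic condition on $h$ in \eqref{eqn:phi-relaxed} is strict (with limit $+\infty$), so one has slack to shrink the coefficient of $h$ while still invoking Proposition \ref{prop:E-W1-gen}. The plan is: fix any $\alpha\in(0,1)$ and set $\tilde h=(1-\alpha)h$. Since $\tilde h\ge(1-\alpha)\phi$ and $(1-\alpha)\phi$ still satisfies $(1-\alpha)\phi(\theta)/\log(\psi_m(\theta)/\theta)\to\infty$ (and inherits superlinearity from Remark \ref{rmk:weaker-cond}), the whole derivation of Proposition \ref{prop:E-W1-gen} goes through with $h$ replaced by $\tilde h$. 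In particular, the analogue of the pointwise lower bound \eqref{eqn:h-lb1} produces a constant $\calC_\alpha\in\bbr$, and the chain of estimates from that proof yields
\begin{equation*}
\int_M\rho_k(x)\log\rho_k(x)\,\d x+\frac{1-\alpha}{2}\iint_{M\times M}h(\dist(x,y))\rho_k(x)\rho_k(y)\,\d x\d y\;\ge\;-\dm C_0+\frac{\calC_\alpha}{2}.
\end{equation*}
Subtracting this inequality from the hypothesis $E[\rho_k]\le M_0$ leaves precisely $\frac{\alpha}{2}\iint h(\dist)\rho_k\rho_k$ on the left, which is therefore bounded above by $M_0+\dm C_0-\calC_\alpha/2$ uniformly in $k$, as required.

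The main obstacle will be verifying the refined pointwise lower bound on $\tilde h$. This follows by repeating the argument used for \eqref{eqn:h-lb1}: at infinity, $(1-\alpha)\phi(\theta)$ dominates each of $2(\dm-1)\log(\psi_m(\theta)/\theta)$, $2\dm\log\theta$, and $\theta$ (using the infinite limits in \eqref{eqn:phi-relaxed} together with the superlinearity of $\phi$ established in Remark \ref{rmk:weaker-cond}); near $\theta=0^+$ the $-2\dm\log\theta$ term pushes the combined expression to $+\infty$ (while $\psi_m(\theta)/\theta\to 1$ and $h\ge 0$); and on compact subsets away from the origin, lower semi-continuity of $h$ ensures a finite infimum. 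These three observations combine to produce the constant $\calC_\alpha$ and close the argument.
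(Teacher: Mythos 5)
Your proposal is correct; the two parts relate to the paper's proof differently. For part (ii) you follow essentially the paper's own route: the paper derives the pointwise bound \eqref{eqn:h-lb2}, namely $h(\theta)\geq \frac{h(\theta)}{2}+2(\dm-1)\log\left(\frac{\psi_m(\theta)}{\theta}\right)+2\dm\log\theta+\mathcal{C}'$, and inserts it into $E[\rho_k]$ together with the HLS-based inequality \eqref{eqn:ineq-key-gen}, which is exactly your argument with the fixed choice $\alpha=1/2$ (so that one quarter of the interaction energy survives as the controlled term); your free parameter $\alpha\in(0,1)$ changes nothing essential, and your verification of the modified pointwise bound (infinity via \eqref{eqn:phi-relaxed} and superlinearity, origin via $-2\dm\log\theta$ and $\psi_m(\theta)/\theta\to1$, lower semi-continuity in between) is the same as the paper's. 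For part (i) you take a genuinely different, more intrinsic route: you argue directly on $M\times M$, using that $(x,y)\mapsto h(\dist(x,y))$ is non-negative and lower semi-continuous, a monotone approximation by bounded continuous functions, and weak convergence of the products $\rho_k\otimes\rho_k\rightharpoonup\rho_0\otimes\rho_0$. The paper instead pushes the measures forward by $f=\log_\p$ to $T_\p M\simeq\bbr^\dm$ and quotes the Euclidean lower semi-continuity result of Santambrogio; your version avoids the Jacobian bookkeeping entirely (incidentally sidestepping the factors $\log|J(f^{-1})|$ in the paper's definition of $\tilde h$, which should be the Jacobians $|J(f^{-1})|$ themselves), at the cost of justifying two standard facts yourself. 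On that point, phrase the tensorization step carefully: tensor products of bounded continuous functions are not sup-norm dense in $C_b(M\times M)$ when $M$ is non-compact, so rather than a density argument you should invoke the standard fact that weak convergence of probability measures on a separable metric space passes to product measures (e.g., via tightness of the products together with the observation that tensor-product test functions form a convergence-determining class). This is a routine fix, not a gap.
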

\begin{proof}
{\em Part (i).}  Using the exponential map $f^{-1} = \exp_\p $ to make a change of variable, we can write the interaction energy of a density $\rho$ as 
\begin{align*}
& \iint_{M\times M}h(\dist(x, y))\rho(x)\rho(y)\d x\d y \\
& \qquad =\iint_{T_\p M\times T_\p M}h(\dist(\exp_\p u,\exp_\p v ))\log|J(f^{-1})(u)|\log|J(f^{-1})(v)| f_\#\rho(u) f_\#\rho(v) \d u\d v.
\end{align*}
The right-hand-side above can be interpreted as the interaction energy of the density $f_\# \rho$ on $T_\p M\simeq \mathbb{R}^\dm$ with the interaction potential $\tilde{h} : T_\p M \times T_\p M \to \mathbb{R}^\dm$ given by
\[
\tilde{h}(u, v)=h(\dist(\exp_\p u,\exp_\p v ))\log|J(f^{-1})(u)|\log|J(f^{-1})(v)|.
\]
Note that $\tilde{h}$ is a lower semi-continuous function, as $h$ is lower semi-continuous and $\exp_\p$ is a diffeomorphism.

Consider now a sequence $\{\rho_k\}_{k\ge 1}$, with $\rho_k \rightharpoonup \rho_0$ weakly (as measures) as $k \to \infty$. By the change of variable formula, it is straightforward to show that $\rho_{k} \rightharpoonup \rho_0$ implies $f_\# \rho_{k} \rightharpoonup f_\# \rho_0$ as $k \to \infty$.  In the Euclidean space, the interaction energy is lower semi-continuous with respect to weak convergence provided the interaction potential is lower semi-continuous \cite[Proposition 7.2]{Santambrogio2015}. By combining all these facts, we then find
\begin{equation*}
\begin{aligned}
\liminf_{k \to \infty }\iint_{M\times M}h(\dist(x, y))\rho_k(x)\rho_k(y)\d x\d y  &= \liminf_{k \to \infty} \iint_{T_\p M\times T_\p M}\tilde{h}(u,v) f_\#\rho_k(u) f_\#\rho_k(v) \d u\d v \\[2pt]
& \geq \iint_{T_\p M\times T_\p M}\tilde{h}(u,v) f_\#\rho_0(u) f_\#\rho_0(v) \d u\d v \\[2pt]
&= \iint_{M\times M}h(\dist(x, y))\rho_0(x)\rho_0(y)\d x\d y.
\end{aligned}
\end{equation*}

{\em Part (ii).}  Since by \eqref{eqn:phi-relaxed},
\[
\lim_{\theta\to\infty}\frac{h(\theta)}{\log\left(\frac{\psi_m(\theta)}{\theta}\right)}\geq \lim_{\theta\to\infty}\frac{\phi(\theta)}{\log\left(\frac{\psi_m(\theta)}{\theta}\right)}=\infty,
\]
we infer that $h(\theta)$ satisfies
\[
\lim_{\theta\to\infty}\left(\frac{h(\theta)}{2}-4(\dm-1)\log\left(\frac{\psi_m(\theta)}{\theta}\right)\right) = \infty.
\]
Also, by $h\geq\phi$ and the fact that $\phi$ is superlinear (see Lemma \ref{lem:linearlog}), we have
\[
\lim_{\theta\to\infty}\left(\frac{h(\theta)}{2}-4\dm\log\theta\right) = \infty.
\]
Now combine the two limits to get
\[
\lim_{\theta\to\infty}\left(h(\theta)-4(\dm-1)\log\left(\frac{\psi_m(\theta)}{\theta}\right)-4\dm\log\theta\right) = \infty.
\]

On the other hand, the dominant term of this expression as $\theta \to 0^+$ is $-4 \dm \log \theta$, and hence
\[
\lim_{\theta\to0^+}\left(h(\theta)-4(\dm-1)\log\left(\frac{\psi_m(\theta)}{\theta}\right)-4\dm\log\theta\right) = \infty.
\]
Since $h$ is lower semi-continuous, there exists then a constant $\mathcal{C}'$ such that
\[
h(\theta) - 4(\dm-1)\log\left(\frac{\psi_m(\theta)}{\theta}\right)- 4\dm\log\theta \geq 2\mathcal{C}', \qquad \text{ for all } \theta>0,
\]
which implies
\begin{equation}
\label{eqn:h-lb2}
h(\theta)\geq\frac{h(\theta)}{2}+2(\dm-1)\log\left(\frac{\psi_m(\theta)}{\theta}\right)+2\dm\log\theta+\mathcal{C}',  \qquad \text{ for all } \theta>0.
\end{equation}

Finally, using \eqref{eqn:h-lb2} and the consequence of the HLS inequality given by \eqref{eqn:ineq-key-gen}, we estimate (for all $k \geq 1$):
\begin{align*}
E[\rho_k] &= \int_M\rho_k(x)\log\rho_k(x)\d x + \frac{1}{2} \iint_{M \times M}  h (\dist(x, y))\rho_k(x)\rho_k(y)\d x\d y \\
&\geq\int_M\rho_k(x)\log\rho_k(x)\d x+(\dm-1)\iint_{M \times M} \log\left(\frac{\psi_m(\dist(x,y))}{\dist(x, y)}\right)\rho_k(x)\rho_k(y)\d x\d y\\
&\qquad+\dm\iint_{M \times M} \log(\dist(x, y))\rho_k(x)\rho_k(y)\d x\d y+\frac{\mathcal{C}'}{2}+\frac{1}{4}\iint_{M \times M} h(\dist(x, y))\rho_k(x)\rho_k(y)\d x\d y\\
&\geq -C_0\dm + \frac{\mathcal{C}'}{2} + \frac{1}{4}\iint_{M \times M} h(\dist(x, y))\rho_k(x)\rho_k(y)\d x\d y.
\end{align*}
Since $E[\rho_k]$ is uniformly bounded from above, we infer the conclusion.
\end{proof}

\begin{lemma}\label{L-Z-2}
Assume $M$ and $h$ satisfy the assumptions in Proposition \ref{prop:E-W1-gen}, with $\p$ a fixed pole in $M$. Suppose $\{\rho_k\}_{k \geq 1} \subset \calP_{\p}(M)$ is a sequence of probability densities such that $E[\rho_k]$ is uniformly bounded from above, and $\rho_k \rightharpoonup \rho_0$ weakly (as measures) as $k \to \infty$. Then, we have
\[
\lim_{k \to \infty} \int_M \rho_k(x)\log |J(f^{-1})(f(x))|\d x =\int_M \rho_0(x)\log |J(f^{-1})(f(x))|\d x,
\]
where $f$ denotes the Riemannian logarithm map at $\p$ (see \eqref{eqn:Rlog}).
\end{lemma}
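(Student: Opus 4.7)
The strategy is to combine the weak convergence $\rho_k\rightharpoonup\rho_0$ with a uniform tail estimate on the continuous, non-negative integrand $g(x):=\log|J(f^{-1})(f(x))|$. By Theorem~\ref{lemma:Chavel-thms-gen} applied with upper curvature bound $0$ and lower curvature bound $-c_m(\theta_x)$, one has the pointwise sandwich
\[
0 \leq g(x) \leq (\dm-1)\,H(\theta_x),\qquad H(\theta):=\log\bigl(\psi_m(\theta)/\theta\bigr),
\]
and $g$ is continuous because the exponential map is smooth.

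The key step is establishing the uniform tail control
\[
\sup_k \int_{\{\theta_x>R\}} g(x)\rho_k(x)\,\d x \;\longrightarrow\; 0 \quad\text{as}\quad R\to\infty.
\]
By Lemma~\ref{L-Z-1}(ii), $\iint h(\dist(x,y))\rho_k(x)\rho_k(y)\,\d x\,\d y$ is uniformly bounded in $k$. Next, I would replace the hypothesized convex $\phi$ by a convex, non-decreasing surrogate $\bar\phi$ on $[0,\infty)$ with $\bar\phi(0)=0$, $\bar\phi\leq h+\text{const}$, and still $\bar\phi/H\to\infty$---this is achieved by choosing a minimizer $\theta^*$ of $\phi$, subtracting $\phi(\theta^*)$, and truncating the result to $0$ on $[0,\theta^*]$. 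The even extension $\tilde{\bar\phi}$ is then convex on $\bbr$, and the Jensen-type argument from the proof of Proposition~\ref{prop:E-W1-gen}---based on the law of cosines together with the centering identity $\int\log_\p y\,\rho_k(y)\,\d y=0$ afforded by $\rho_k\in\calP_\p(M)$---yields
\[
\int \bar\phi(\theta_x)\rho_k(x)\,\d x \leq \iint \bar\phi(\dist(x,y))\rho_k(x)\rho_k(y)\,\d x\,\d y \leq C'.
\]
Because $\bar\phi/H\to\infty$, for each $\epsilon>0$ there exists $R_\epsilon$ with $H(\theta)\leq\epsilon\bar\phi(\theta)$ for all $\theta\geq R_\epsilon$, and the tail estimate follows from the pointwise bound $g\leq(\dm-1)H(\theta_x)$.

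With the tail control in hand, I would conclude via a standard cutoff argument. Let $\chi_R\in C_c(M)$ be a continuous cutoff with $\chi_R=1$ on $B_R(\p)$ and $\chi_R=0$ outside $B_{2R}(\p)$. Then $g\chi_R$ is bounded and continuous, so $\int g\chi_R\rho_k\,\d x\to\int g\chi_R\rho_0\,\d x$ as $k\to\infty$ by weak convergence. The tail estimate controls $\int g(1-\chi_R)\rho_k\,\d x$ uniformly in $k$; the Portmanteau theorem applied to the continuous non-negative $g$ ensures that $\int g\rho_0\,\d x$ is finite (being bounded by $\liminf_k\int g\rho_k\,\d x$), and monotone convergence transports the tail bound to $\rho_0$. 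Sending first $k\to\infty$ and then $\epsilon\to 0$ gives the claim. The main obstacle is the construction of the surrogate $\bar\phi$: one must simultaneously preserve convexity (including at the join point $\theta^*$), non-decreasingness, vanishing at the origin, the asymptotic $\bar\phi/H\to\infty$, and the domination $\bar\phi\leq h+\text{const}$, so that the Jensen machinery of Proposition~\ref{prop:E-W1-gen} applies to $\bar\phi$ and produces the crucial moment bound $\int\bar\phi(\theta_x)\rho_k\,\d x\leq C'$.
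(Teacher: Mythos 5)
Your proposal is correct and follows essentially the same route as the paper: a uniform bound on the interaction energy from Lemma \ref{L-Z-1}(ii), a Jensen/centering argument as in Proposition \ref{prop:E-W1-gen} giving a uniform moment bound $\int_M\phi(\theta_x)\rho_k(x)\,\d x\le C$, the tail estimate from $0\le\log|J(f^{-1})(f(x))|\le(\dm-1)\log\left(\psi_m(\theta_x)/\theta_x\right)$ combined with \eqref{eqn:phi-relaxed}, and weak convergence on a bounded region. Your two refinements---replacing $\phi$ by a non-decreasing convex surrogate $\bar\phi$ vanishing at the origin so the even extension is genuinely convex, and using a continuous cutoff $\chi_R$ in place of the indicator of $\{\theta_x\le R\}$---merely tighten details the paper treats implicitly (it applies the Jensen argument to $\phi$ directly and splits at $B_R(\p)$), so they do not constitute a different approach.
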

\begin{proof}
Since $E[\rho_k]$ is uniformly bounded from above, by Lemma \ref{L-Z-1} part (ii), so is \\ $\iint_{M \times M} h(\dist(x, y))\rho_k(x)\rho_k(y)\d x\d y$. Denote by $U$ such an upper bound, i.e., 
\[
\iint_{M \times M} h(\dist(x, y))\rho_k(x)\rho_k(y)\d x\d y \leq U, \qquad \text{ for all } k \geq 1.
\]
Also, by Lemma  \ref{L-Z-1} part (i), we have
\[
\iint_{M \times M} h(\dist(x, y))\rho_0(x)\rho_0(y)\d x\d y \leq U.
\]

Now recall that $\phi$ in \eqref{eqn:cond-h-exist} is convex. By the same convex function arguments used in the proof of Proposition \ref{prop:E-W1-gen} (see the arguments used for function $H$ that led to \eqref{ineq:dint-int}), one can get
\begin{equation*}
\iint_{M \times M} \phi(\dist(x, y))\rho_k(x)\rho_k(y)\d x\d y\geq \int_M \phi(\theta_x)\rho_k(x)\d x, \qquad \text{ for all } k \geq 1,
\end{equation*}
and
\begin{equation*}
\iint_{M \times M} \phi(\dist(x, y))\rho_0(x)\rho_0(y)\d x\d y\geq \int_M \phi(\theta_x)\rho_0(x)\d x.
\end{equation*}
Consequently, since $h(\theta)\geq \phi(\theta)$, we infer from above that
\begin{equation}\label{phibounded}
\int_M \phi(\theta_x)\rho_k(x)\d x \leq U, \quad \text{ for all } k \geq 1, \qquad \text{ and} \quad \int_M \phi(\theta_x)\rho_0(x)\d x \leq U.
\end{equation}

For any $R>0$ fixed, we have the following estimate by the triangle inequality:
\begin{equation}
\label{eqn:int-diff}
\begin{aligned}
&\left|\int_M \rho_0(x)\log|J(f^{-1})(f(x))|\d x-\int_M \rho_k(x)\log|J(f^{-1})(f(x))|\d x\right|\\
& \qquad \leq \left|\int_{\theta_x\leq R}\rho_0(x)\log|J(f^{-1})(f(x))|\d x-\int_{\theta_x\leq R}\rho_k(x)\log|J(f^{-1})(f(x))|\d x\right|\\
& \qquad \quad +\int_{\theta_x>R}\rho_0(x)\log|J(f^{-1})(f(x))|\d x+\int_{\theta_x>R}\rho_k(x)\log|J(f^{-1})(f(x))|\d x,
\end{aligned}
\end{equation}
where we also used that $|J(f^{-1})(f(x))| \geq 1$ (a consequence of Theorem \ref{lemma:Chavel-thms-gen}). Using Theorem \ref{lemma:Chavel-thms-gen} and \eqref{phibounded}, we can estimate the last two terms in the right-hand-side above as follows:
\begin{align*}
&\int_{\theta_x>R}\rho_0(x)\log|J(f^{-1})(f(x))|\d x+\int_{\theta_x>R}\rho_k(x)\log|J(f^{-1})(f(x))|\d x\\[2pt]
& \quad \leq(\dm-1)\int_{\theta_x>R}\rho_0(x)\log\left(\frac{\psi_m(\theta_x)}{\theta_x}\right)\d x+(\dm-1)\int_{\theta_x>R}\rho_k(x)\log\left(\frac{\psi_m(\theta_x)}{\theta_x}\right)\d x\\[2pt]
& \quad \leq (\dm-1)\left\|\frac{\log\left(\frac{\psi_m(\cdot)}{\cdot}\right)}{\phi(\cdot)}\right\|_{L^\infty([R, \infty))}\left(
\int_{\theta_x>R}\rho_0(x)\phi(\theta_x)\d x+\int_{\theta_x>R}\rho_k(x)\phi(\theta_x)\d x\right)\\
& \quad \leq 2(\dm-1)\left\|\frac{\log\left(\frac{\psi_m(\cdot)}{\cdot}\right)}{\phi(\cdot)}\right\|_{L^\infty([R, \infty))}U.
\end{align*}

Using this result in \eqref{eqn:int-diff}, together with the weak convergence $\rho_k \rightharpoonup \rho_0$,  we then find
\begin{align*}
&\limsup_{k\to\infty}\left|\int_M \rho_0(x)\log|J(f^{-1})(f(x))|\d x-\int_M \rho_k(x)\log|J(f^{-1})(f(x))|\d x\right|\\[2pt]
&\qquad \leq \limsup_{k\to\infty}\left|\int_{\theta_x\leq R}\rho_0(x)\log|J(f^{-1})(f(x))|\d x-\int_{\theta_x\leq R}\rho_k(x)\log|J(f^{-1})(f(x))|\d x\right|\\[2pt]
&\qquad \quad +2(\dm-1)\left\|\frac{\log\left(\frac{\psi_m(\cdot)}{\cdot}\right)}{\phi(\cdot)}\right\|_{L^\infty([R, \infty))}U\\[2pt]
&\qquad = 2(\dm-1)\left\|\frac{\log\left(\frac{\psi_m(\cdot)}{\cdot}\right)}{\phi(\cdot)}\right\|_{L^\infty([R, \infty))}U.
\end{align*}

Finally, by assumption \eqref{eqn:phi-relaxed}, we have
\[
\lim_{R\to\infty}\left\|\frac{\log\left(\frac{\psi_m(\cdot)}{\cdot}\right)}{\phi(\cdot)}\right\|_{L^\infty([R, \infty))}=0.
\]
Since $R$ is arbitrary, we then get
\[
\limsup_{k\to\infty}\left|\int_M \rho_0(x)\log|J(f^{-1})(f(x))|\d x-\int_M \rho_k(x)\log|J(f^{-1})(f(x))|\d x\right|\leq0,
\]
which yields the desired conclusion.
\end{proof}

We can now show that the energy functional is lower semi-continuous along minimizing sequences.

\begin{proposition}[Lower semi-continuity of the energy]\label{lscE}
\label{prop:lsc-energy}
Let $\p \in M$ be a fixed pole and assume $M$ and $h$ satisfy the assumptions in Proposition \ref{prop:E-W1-gen}. Suppose $\{\rho_k\}_{k \geq 1} \subset \calP_{\p}(M)$ such that $E[\rho_k]$ is uniformly bounded from above, and $\rho_k \rightharpoonup \rho_0$ weakly (as measures) as $k \to \infty$. Then, the energy functional is lower semi-continuous along $\rho_k$, i.e.,
\[
E[\rho_0] \leq \liminf_{k\to\infty}E[\rho_k].
\]
\end{proposition}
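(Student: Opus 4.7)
The plan is to push all measures forward to the tangent space $T_\p M \simeq \bbr^\dm$ via the Riemannian logarithm $f = \log_\p$, which converts the entropy part of $E$ into the Euclidean Boltzmann entropy (up to a Jacobian correction) and lets me invoke the classical lower semi-continuity result on $\bbr^\dm$. Specifically, for $\rho \in \calP_{ac}(M)$, the pushforward identity $\rho(x) = f_\#\rho(f(x))/|J(f^{-1})(f(x))|$ yields the decomposition
\begin{align*}
E[\rho_k] &= \int_{T_\p M} f_\#\rho_k(u)\log f_\#\rho_k(u)\,\d u - \int_M \rho_k(x)\log|J(f^{-1})(f(x))|\,\d x \\
& \quad + \frac{1}{2}\iint_{M\times M} h(\dist(x,y))\rho_k(x)\rho_k(y)\,\d x\,\d y,
\end{align*}
so the proof reduces to analyzing these three terms separately as $k\to\infty$.

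Because $f$ is a global diffeomorphism, a straightforward change of variables shows $\rho_k \rightharpoonup \rho_0$ on $M$ implies $f_\#\rho_k \rightharpoonup f_\#\rho_0$ on $T_\p M$; then the classical lower semi-continuity of the Boltzmann entropy on $\bbr^\dm$ under narrow convergence of probability measures controls the first term by $\int f_\#\rho_0\log f_\#\rho_0\,\d u \leq \liminf_k \int f_\#\rho_k\log f_\#\rho_k\,\d u$. Lemma \ref{L-Z-2} supplies \emph{full} convergence (not merely l.s.c.)\ of the middle Jacobian term, using the standing upper bound on $E[\rho_k]$. Lemma \ref{L-Z-1}(i) gives lower semi-continuity of the interaction term. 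Combining, and noting that the middle term enters with a minus sign but converges (so it passes cleanly through $\liminf$), subadditivity of $\liminf$ delivers $E[\rho_0] \leq \liminf_k E[\rho_k]$.

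The main obstacle I anticipate is that the decomposition above is only meaningful when $\rho_0 \in \calP_{ac}(M)$, so that $f_\#\rho_0$ is absolutely continuous on $T_\p M$; this is the substantive case. If instead $\rho_0 \notin \calP_{ac}(M)$, then the given sequence $\{\rho_k\}\subset \calP_{ac}(M)$ is itself a competitor in the relaxation formula defining $E[\rho_0]$ in \eqref{eqn:energy}, so the desired inequality is immediate from the definition. A secondary technical point to justify is that the Euclidean entropy is indeed l.s.c.\ under narrow convergence on the non-compact space $\bbr^\dm$, which follows from the dual variational representation of $H$ together with preservation of total mass under narrow convergence of probability measures (testing against $f\equiv 1$ forces $f_\#\rho_0$ to still be a probability measure, ruling out escape of mass).
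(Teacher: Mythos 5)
Your proposal follows the same route as the paper's proof: push forward by $f=\log_\p$, split $E[\rho_k]$ into the Euclidean entropy of $f_\#\rho_k$, the Jacobian correction term, and the interaction energy, then combine lower semi-continuity of the Euclidean entropy, the full convergence of the Jacobian term from Lemma \ref{L-Z-2}, and Lemma \ref{L-Z-1}(i), via subadditivity of $\liminf$. Your extra observation that the case $\rho_0\notin\calP_{ac}(M)$ is immediate from the relaxed definition \eqref{eqn:energy} is correct and harmless.

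The one step whose justification does not hold as you state it is the lower semi-continuity of the Boltzmann entropy on $\bbr^\dm$ under narrow convergence: conservation of total mass is \emph{not} sufficient, because the negative part of $\rho\log\rho$ can leak to infinity. For instance, with $g$ a fixed smooth probability density and $u_R$ the uniform density on the ball of radius $R$, the densities $\rho_k=(1-\tfrac1k)g+\tfrac1k u_{R_k}$ converge narrowly to $g$, remain probability densities, and yet $\int\rho_k\log\rho_k\le(1-\tfrac1k)\int g\log g-\tfrac1k\log|B_{R_k}|\to-\infty$ if $\log|B_{R_k}|\gg k$; likewise, the naive dual representation of the entropy relative to Lebesgue measure (an infinite reference measure) does not produce a supremum of narrowly continuous functionals. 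What rescues the argument in the present setting, and what the Euclidean result cited in the paper (\cite{CaDePa2019}) actually uses, is a uniform confinement bound: since $E[\rho_k]$ is bounded above, Lemma \ref{L-Z-1}(ii) together with the Jensen-type argument leading to \eqref{phibounded} gives $\int_M\phi(\theta_x)\rho_k(x)\,\d x\le U$ with $\phi$ convex and superlinear, i.e.\ a uniform superlinear moment bound for $f_\#\rho_k$ on $T_\p M\simeq\bbr^\dm$, and with this bound the Euclidean entropy is indeed lower semi-continuous along the sequence. So the gap is genuine but local and fixable with material you already have: replace the mass-conservation justification by the uniform $\phi$-moment bound at that point, and your proof coincides with the paper's.
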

\begin{proof}
The energy functional contains tho components: the entropy and the interaction energy; we will consider the two parts separately. The lower semi-continuity of the interaction energy was established in Lemma \ref{L-Z-1} part (i). For the entropy component, we use again the exponential map $f^{-1} = \exp_\p$ to change variables and write (see also \eqref{eqn:push-rho}):
\[
\int_M \rho_k(x)\log\rho_k(x)\d x=\int_{\bbr^\dm}f_\#\rho_k(u)\log f_\#\rho_k(u) \d u-\int_M \rho_k(x)\log |J(f^{-1})(f(x))|\d x,
\]
for all $k \geq 1$.

The first term in the right-hand-side above is the entropy of $f_\# \rho_k$ on $T_\p M\simeq \mathbb{R}^\dm$.  Also note that by the change of variable formula, $\rho_{k} \rightharpoonup \rho_0$ implies $f_\# \rho_{k} \rightharpoonup f_\# \rho_0$ as $k \to \infty$. 
Then, use the fact that the entropy functional in $\bbr^\dm$ is lower semi-continuous \cite{CaDePa2019}, along with Lemma \ref{L-Z-2}, to get
\begin{equation*}
\begin{aligned}
&  \liminf_{k \to\infty} \int_M \rho_{k}(x)\log\rho_{k}(x)\d x \\
& \qquad \geq  \liminf_{k \to \infty} \int_{\bbr^\dm}f_\#\rho_k(u)\log f_\#\rho_k(u) \d u- \lim_{k \to \infty} \int_M \rho_k(x)\log |J(f^{-1})(f(x))|\d x \\
& \qquad \geq  \int_{\bbr^\dm}f_\#\rho_0(u)\log f_\#\rho_0(u) \d u-\int_M \rho_0(x)\log |J(f^{-1})(f(x))|\d x\\
& \qquad = \int_M \rho_0(x)\log\rho_0(x)\d x,
 \end{aligned}
\end{equation*}
where for the equal sign we used again the change of variable $x= \exp_\p u$ and \eqref{eqn:push-rho}.

The lower semi-continuity of the energy now follows from the lower semi-continuity of its two components, as
\begin{align*}
\liminf_{k\to\infty}E[\rho_k] &\geq  \liminf_{k \to\infty} \int_M \rho_{k}(x)\log\rho_{k}(x)\d x +  \liminf_{k \to \infty} \frac{1}{2} \iint_{M\times M}h(\dist(x, y)) \rho_k(x)\rho_k(y)\d x\d y \\
& \geq  \int_M \rho_0(x)\log\rho_0(x)\d x +  \frac{1}{2} \iint_{M\times M}h(\dist(x, y)) \rho_0(x)\rho_0(y)\d x\d y \\
& = E[\rho_0].
\end{align*}
\end{proof}

The last ingredient needed for the proof of the main theorem is the conservation of centre of mass, given by the following result established in \cite{FePa2024}. We point out that in the next lemma, as well as in the proof of Theorem \ref{thm:exist-gen} that follows, we will use the notation $B_R(\delta_\p)$ for the geodesic ball in the space $(\calP_1(M),\calW_1)$, of radius $R$ and centre at $\delta_\p$. A similar notation has been used in the paper for the open ball $B_\theta(\p)$ of radius $\theta$ and centre at $\p$, in the geodesic space $(M,\dist)$. Nevertheless, the different spaces in which these geodesic balls are considered, will be clear from the context. 

\begin{lemma}[Conservation of centre of mass \cite{FePa2024}]
\label{lemma:CM}
Let $M$ be a Cartan-Hadamard manifold, $\p \in M$ a fixed pole, $R>0$ a fixed radius,  and $\{\rho_k\}_{k\geq 1}$ a sequence in $\overline{B_R(\delta_\p)}\cap \mathcal{P}_\p(M)$. Also assume that
\[
\int_M \phi(\theta_x) \rho_k(x) \d x
\]
is uniformly bounded from above, where $\phi$ is a function with superlinear growth at infinity, i.e., $\lim_{\theta \to \infty} \frac{\phi(\theta)}{\theta} = \infty$. Then, there exists a subsequence of $\{\rho_k\}_{k\geq 1}$ which converges weakly as measures to $\rho_0\in \overline{B_R(\delta_\p)}\cap \mathcal{P}_\p(M)$.
\end{lemma}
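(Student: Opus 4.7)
The plan is to invoke Prokhorov's theorem to extract a weakly convergent subsequence and then verify that the weak limit inherits both the Wasserstein-ball constraint and the centred-mass condition. The superlinear moment bound $\int_M \phi(\theta_x) \rho_k(x) \d x \leq C$ is the key ingredient that upgrades plain weak-measure convergence to convergence of integrals against unbounded linear-growth test functions; without it, Prokhorov alone would produce a limit that a priori could lose mass at infinity in both constraints.

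First I would establish tightness. Since $\rho_k \in \overline{B_R(\delta_\p)}$ and $\theta_x = \dist(\p,x)$ is a $1$-Lipschitz function vanishing at $\p$, Kantorovich--Rubinstein duality gives
\[
\int_M \theta_x \, \rho_k(x) \d x \;=\; \calW_1(\rho_k, \delta_\p) \;\leq\; R.
\]
Markov's inequality combined with the Hopf--Rinow theorem (which guarantees compactness of closed geodesic balls in the complete manifold $M$) yields tightness of $\{\rho_k\}$. Prokhorov's theorem then extracts a subsequence, still denoted $\{\rho_k\}$, converging weakly as measures to some probability measure $\rho_0$ on $M$.

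Next, by the de la Vallée Poussin criterion, the uniform bound $\int_M \phi(\theta_x)\rho_k(x)\d x \leq C$ with $\phi$ superlinear forces the family $\{\theta_x\, \rho_k\}_{k\geq 1}$ to be uniformly integrable. Combined with tightness, this upgrades weak-measure convergence to
\[
\int_M g(x) \rho_k(x) \d x \;\longrightarrow\; \int_M g(x) \rho_0(x) \d x
\]
for every continuous $g:M\to\bbr$ satisfying $|g(x)| \leq C_g(1+\theta_x)$. Choosing $g = \theta_x$ gives $\calW_1(\rho_0,\delta_\p) = \int_M \theta_x \, \rho_0(x) \d x \leq R$, hence $\rho_0 \in \overline{B_R(\delta_\p)}$. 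Choosing $g(x) = \langle v,\log_\p x\rangle$ for an arbitrary $v \in T_\p M$ (this function is continuous on $M$ because $\exp_\p$ is a global diffeomorphism on the Cartan--Hadamard manifold, and it satisfies $|g(x)| \leq |v|\theta_x$) allows us to pass to the limit in $\int_M \langle v,\log_\p x\rangle\rho_k(x)\d x = 0$. Since $v$ is arbitrary, $\int_M \log_\p x\, \rho_0(x) \d x = 0$, so $\rho_0 \in \calP_\p(M)$.

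The main obstacle is precisely that the two relevant test functions $\theta_x$ and $\log_\p x$ are both unbounded, so the bare weak-measure convergence supplied by Prokhorov is not sufficient to pass to the limit in either the Wasserstein bound or the centre-of-mass constraint; the superlinear moment hypothesis is exactly what is needed to promote weak convergence to convergence against continuous observables of linear growth. A secondary technical point concerns the absolute continuity component in the definition of $\calP_\p(M)$: weak limits of absolutely continuous measures need not remain absolutely continuous, but since the lemma is invoked on minimizing sequences for the relaxed energy \eqref{eqn:energy}, one interprets the conclusion within the $\calW_1$-closure of $\overline{B_R(\delta_\p)}\cap\calP_\p(M)$, which is the setting in which the subsequent existence argument proceeds.
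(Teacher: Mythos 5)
Your proof is correct and follows essentially the same route as the paper's (which is only sketched here, deferring to \cite{FePa2024}): tightness from the uniform $\calW_1$-bound plus Prokhorov to extract the weak limit, then the superlinear moment bound to pass to the limit in the unbounded test functions $\theta_x$ and $\langle v,\log_\p x\rangle$ so that both the ball constraint and the centre of mass are preserved. Your closing caveat about absolute continuity of the weak limit is a genuine subtlety that the paper's statement also glosses over, and your resolution (the energy bound along minimizing sequences is what ultimately supplies it) is the right one.
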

\begin{proof}
The result was stated and proved in \cite[Lemma 6.7]{FePa2024}. First, by tightness and Prokhorov's theorem, one can extract a subsequence of $\{\rho_k\}_{k\geq 1}$ which converges weakly as measures to $\rho_0\in \overline{B_R(\delta_\p)}$. Then, use the uniform boundedness of $\int_M \phi(\theta_x) \rho_k(x) \d x$ to argue that the centre of mass is preserved in the limit and hence, $\rho_0 \in \calP_\p(M)$.  We refer to \cite[Lemma 6.7]{FePa2024} for the details.
\end{proof}
We now present the proof of Theorem \ref{thm:exist-gen}, one of the main results of this paper.

\smallskip
{\em Proof of Theorem \ref{thm:exist-gen}.} 

We will present the proof in several steps.

{\em \underline{Step 1.}} The energy is bounded below on $\calP_\p(M)$ (see Proposition \ref{prop:E-W1-gen}). In particular, $E[\rho]$ is bounded below on $\overline{B_1(\delta_\p)}\cap \mathcal{P}_\p(M)$, and define
\[
E_0:=\inf_{\rho\in \overline{B_1(\delta_\p)} \cap \mathcal{P}_\p(M)} E[\rho].
\]
Take a minimizing sequence $\{\rho_k\}_{k\in\mathbb{N}}$ of $E[\rho]$ on $\overline{B_1(\delta_\p)}\cap \mathcal{P}_\p(M)$, i.e.,
\[
\{\rho_k\}\subset \overline{B_1(\delta_\p)}\cap \mathcal{P}_\p(M), \qquad \lim_{k\to\infty}E[\rho_k]=E_0.
\]
Without loss of generality, we can assume that $E[\rho_k]$ is decreasing. In particular, $E[\rho_k]$ is uniformly bounded from above and by the argument used in the proof of Lemma \ref{L-Z-2} (see the derivation of \eqref{phibounded}), $\int_M \phi(\theta_x) \rho_k(x) \d x $ is also uniformly bounded from above.  We also recall that the function $\phi$ used in our assumption \eqref{eqn:phi-relaxed} (or \eqref{eqn:cond-h-exist}) for $h$ has superlinear growth at infinity, and therefore it can be used in the context of Lemma \ref{lemma:CM}. Hence, by Lemma \ref{lemma:CM}, there exists a subsequence $\{\rho_{k_l}\}_{l\in\mathbb{N}}$ which converges weakly to $\rho_0 \in \overline{B_1(\delta_\p)}\cap \mathcal{P}_\p(M)$. 

From the lower semi-continuity of the energy in Proposition \ref{prop:lsc-energy}, we have
\begin{equation*}
\lim_{l\to\infty}E[\rho_{k_l}]\geq E[\rho_0].
\end{equation*}
Therefore,
\[
E_0=\lim_{l\to\infty}E[\rho_{k_l}]\geq E[\rho_0]\geq E_0,
\]
and hence $E[\rho_0]= E_0$, i.e, $\rho_0$ is a minimizer of the energy on $\overline{B_1(\delta_\p)}\cap \mathcal{P}_\p(M)$.

{\em \underline{Step 2.}} Take $R>1$ large enough such that
\[
E[\rho_0]\leq -C_0\dm+\frac{\calC}{2}+ \frac{2-\sqrt{2}}{2} R,
\]
where $C_0$ and $\calC$ are the constants from Proposition \ref{prop:E-W1-gen} ($C_0$ is the constant from the HLS inequality \eqref{eqn:HLS-CH-II}, and depends only on the dimension $\dm$, and $\calC$ depends on $h$, $c_m$ and $\dm$). Then, by Proposition \ref{prop:E-W1-gen}, for any $\rho\in  \mathcal{P}_\p(M)\setminus \overline{B_{R}(\delta_\p)}$ it holds that
\begin{equation}
\label{eqn:E-est-BRc}
\begin{aligned}
E[\rho] &\geq -C_0\dm+\frac{\calC}{2}+\frac{2-\sqrt{2}}{2}\calW_1(\rho, \delta_\p) \\[2pt]
&>  -C_0 \dm+\frac{\calC}{2}+\frac{2-\sqrt{2}}{2}R \\[2pt]
& \geq E[\rho_0].
\end{aligned}
\end{equation}

On the other hand,
\[
E[\rho_0] = \inf_{\rho\in \overline{B_{1}(\delta_\p)}\cap \mathcal{P}_\p(M)}E[\rho] \geq \inf_{\rho\in \overline{B_{R}(\delta_\p)}\cap \mathcal{P}_\p(M)}E[\rho],
\]
which together with \eqref{eqn:E-est-BRc}, it implies
\[
\inf_{\rho\in \mathcal{P}_\p(M)}E[\rho] = \inf_{\rho\in \overline{B_{R}(\delta_\p)}\cap \mathcal{P}_\p(M)}E[\rho].
\]
Finally, the existence of a global minimizer of $E[\rho]$ on $\overline{B_{R}(\delta_\p)}\cap \mathcal{P}_\p(M)$ can be argued exactly as we argued above the existence of the minimizer $\rho_0$ in $\overline{B_{1}(\delta_\p)}\cap \mathcal{P}_\p(M)$. This concludes the proof.

\hspace {14cm} $\qed$

\begin{example}
\label{ex:exist}
\normalfont
We will revisit here the examples from Example \ref{ex:nonexist} in the context of Theorem \ref{thm:exist-gen}. 
\begin{enumerate}
\item Constant: $c_m(\theta) \equiv c_m$.  In this case, \eqref{eqn:cond-h-exist} reduces to $h \geq \phi$, with $\phi$ growing superlinearly at infinity, which recovers the result in \cite[Theorem 2.2]{FePa2024}.

\item Power law: $c_m(\theta)=\theta^k$ with $k\geq1$. Ground states exists provided $h \geq \phi$ and $\phi$ grows faster than $\theta^{k/2+1}$ at infinity. Note that the result is sharp in the sense that a growth of $h$ slower than $\theta^{k/2+1}$ is not sufficient to contain the diffusion, and leads to spreading (see item (2) in Example \ref{ex:nonexist}).

\item Exponential growth: $c_m(\theta)= e^{\beta\theta}$ with $\beta >0$. For this case, spreading is prevented provided $h(\theta)$ grows at infinity faster than $e^{\beta\theta/2}\theta$. Hence, an exponential growth of the attractive interactions is needed to contain diffusion on a manifold with exponentially growing curvatures.
\end{enumerate}
\end{example}

We conclude the paper with a discussion on the assumption of the monotonicity of $c_m$. This assumption can be dropped by requiring a stronger assumption on the behaviour at infinity of $h$. Indeed, assume that the sectional curvatures satisfy \eqref{eqn:K-lb-nm}, where $c_m$ is not necessarily non-decreasing. Define
\[
\tilde{c}_m(\theta):=\max_{0\leq t \leq \theta}c_m(t).
\]
Then, $\tilde{c}_m$ is continuous and non-decreasing and by \eqref{eqn:K-lb-nm}, the curvatures satisfy
\[
-\tilde{c}_m(\theta_x) \leq \mathcal{K}(x;\sigma)\leq 0,
\]
for all $x \in M$ and all two-dimensional subspaces $\sigma \subset T_x M$. Hence, we can apply Theorem \ref{thm:exist-gen} with $\tilde{c}_m$ instead of $c_m$, and obtain the following corollary.

\begin{corollary}
Let $M$ be an $\dm$-dimensional Cartan-Hadamard manifold and $\p \in M$ a fixed pole. Assume that the sectional curvatures of $M$ satisfy \eqref{eqn:K-lb-nm}, where $c_m(\cdot)$ is a positive, continuous function. Also assume that $h:[0, \infty)\to[0, \infty)$ is a non-decreasing lower semi-continuous function which satisfies
\begin{equation}
h(\theta)\geq \phi(\theta),\qquad \text{ and } \qquad   \lim_{\theta\to\infty}\frac{\phi(\theta)}{\theta\sqrt{\sup_{0\leq t\leq \theta}c_m(t)}}=\infty,
\end{equation}
for some convex function $\phi$.
Then there exists a global minimum of $E[\rho]$ in $\calP_\p(M)$. 
\end{corollary}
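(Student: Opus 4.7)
The plan is to reduce this corollary directly to Theorem \ref{thm:exist-gen} by replacing $c_m$ with its running supremum, which is the construction explicitly suggested in the paragraph preceding the corollary. Set
\[
\tilde{c}_m(\theta) := \sup_{0 \leq t \leq \theta} c_m(t), \qquad \theta \geq 0.
\]
First I would verify that $\tilde{c}_m$ satisfies the hypotheses required for the invocation of Theorem \ref{thm:exist-gen}: positivity follows from positivity of $c_m$, and monotonicity is immediate from the definition as a running supremum. The only mildly non-trivial point is continuity. Since $\tilde{c}_m$ is non-decreasing, right-continuity at any $\theta_0$ reduces to the fact that $\sup_{\theta_0 \leq t \leq \theta_0+\varepsilon} c_m(t) \to c_m(\theta_0)$ as $\varepsilon \to 0^+$ by continuity of $c_m$; for left-continuity, if there were a strict jump $\tilde{c}_m(\theta_0^-) < \tilde{c}_m(\theta_0)$, then continuity of $c_m$ would force $c_m(t)$ to exceed $\tilde{c}_m(\theta_0^-)$ on a left neighborhood of $\theta_0$, contradicting the definition of $\tilde{c}_m$ on that neighborhood.

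Next I would check that the curvature assumption \eqref{eqn:K-lb-nm} still holds with $\tilde{c}_m$ in place of $c_m$. This is immediate: since $c_m(\theta) \leq \tilde{c}_m(\theta)$ for every $\theta \geq 0$,
\[
-\tilde{c}_m(\theta_x) \leq -c_m(\theta_x) \leq \mathcal{K}(x;\sigma) \leq 0
\]
for all $x \in M$ and every two-dimensional $\sigma \subset T_x M$.

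Then I would verify that the growth condition \eqref{eqn:cond-h-exist} for Theorem \ref{thm:exist-gen} is satisfied with $\tilde{c}_m$ in place of $c_m$. This is actually built into the hypothesis of the corollary: the condition
\[
\lim_{\theta \to \infty} \frac{\phi(\theta)}{\theta \sqrt{\sup_{0\leq t \leq \theta} c_m(t)}} = \infty
\]
is exactly $\displaystyle \lim_{\theta \to \infty} \frac{\phi(\theta)}{\theta \sqrt{\tilde{c}_m(\theta)}} = \infty$, while the pointwise bound $h \geq \phi$ with $\phi$ convex is assumed directly.

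With these three checks in hand, all hypotheses of Theorem \ref{thm:exist-gen} hold with $\tilde{c}_m$ playing the role of $c_m$. Applying that theorem yields a global minimizer of $E[\rho]$ in $\calP_o(M)$, which is the claim of the corollary. There is no genuine obstacle here; the corollary is purely a packaging statement, and the only place where one must be careful is verifying continuity of the running supremum, which I handled above.
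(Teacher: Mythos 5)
Your proposal is correct and follows essentially the same route as the paper: the paper also replaces $c_m$ by its running supremum $\tilde{c}_m(\theta)=\max_{0\leq t\leq\theta}c_m(t)$, observes that the curvature bound and the growth condition in the corollary become exactly the hypotheses of Theorem \ref{thm:exist-gen} for $\tilde{c}_m$, and applies that theorem. Your explicit verification of the continuity of the running supremum is a detail the paper asserts without proof, but it does not change the argument.
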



\appendix
\section{Proof of Theorem \ref{lemma:Chavel-thms-gen}}
\label{appendix:Chavel-gen}

\noindent {\em Part I: Left inequality.} This part follows exactly the steps of the proof of \cite[Theorem III.4.1]{Chavel2006} (constant bounds of $\mathcal{K}$), but we will reproduce it here for completeness. Following the proof of \cite[Theorem II.6.4]{Chavel2006}, for $Y\in \mathcal{J}^\perp$, one has
\[
|Y|' = \langle Y, \nabla_t Y \rangle |Y|^{-1},
\]
and then,
\begin{align}
|Y|'' &= |Y|^{-1} \left( |\nabla_t Y|^2 - \langle Y, \calR(\gamma',Y)\gamma'\rangle \right) - |Y|^{-3} \langle Y, \nabla_t Y \rangle^2 \nonumber \\[2pt]
& \geq c_M |Y| + |Y|^{-3} \left( |\nabla_t Y|^2 |Y|^2 - \langle Y, \nabla_t Y \rangle^2 \right) \nonumber \\[2pt]
& \geq c_M |Y|,
\label{eqn:Ydp-ineq}
\end{align}
where in the first line we used the Jacobi equation \eqref{eqn:Jacobi}, for the second line we used the upper bound on the sectional curvatures, and the third line follows from Cauchy-Schwarz inequality.

We set
\[
|Y|(0)=0,\quad |Y|'(0)=1.
\]
It then follows from \eqref{eqn:Ydp-ineq} and \eqref{eqn:psimM} that $F:= |Y|' \psi_M - |Y| \psi_M'$ is non-decreasing, and hence, $F \geq 0$ (as $F(0) = 0$). We can write this as 
\begin{equation}
\frac{ |Y|' }{ |Y|} \geq \frac{\psi_M'}{\psi_M}.
\label{eqn:YpoY}
\end{equation}


Instead of calculating the determinant of $\mathcal{A}$, we work with the self-adjoint matrix $\mathcal{B}$ defined as
\[
\mathcal{B}:=\mathcal{A}^*\mathcal{A}.
\]
From this definition, we have
\begin{equation}
\frac{(\det\mathcal{A})'}{\det\mathcal{A}}=\frac{1}{2}\frac{(\det\mathcal{B})'}{\det\mathcal{B}} = \frac{1}{2} \mathrm{tr} (\mathcal{B}'\mathcal{B}^{-1}),
\label{eqn:detApodetA}
\end{equation}
where the second equal sign comes from Jacobi's formula for the derivative of a determinant,

Fix an arbitrary $\tau>0$. Since any self-adjoint matrix is diagonalizable, we can choose an orthonormal basis $\{e_1,\cdots, e_{\dm-1}\}$ of $\gamma'(0)^\perp$ consisting of eigenvectors of $\mathcal{B}(\tau)$. Then consider the solution $\{\eta_1, \cdots, \eta_{\dm-1}\}$ of the Jacobi equation in $\gamma'(0)^\perp$, given by
\[
\eta_i(t)=\mathcal{A}(t)e_i, \qquad \text{ for all } 1\leq i\leq \dm-1. 
\]
It holds immediately that for all $t$,
\[
\langle \eta_i(t), \eta_j(t)\rangle=\langle \mathcal{A}(t)e_i,\mathcal{A}(t)e_j\rangle=\langle \mathcal{B}(t)e_i, e_j\rangle, \qquad 1 \leq i,j \leq \dm-1.
\]

By differentiating the above, we compute
\begin{align*}
\mathrm{tr} (\mathcal{B}'\mathcal{B}^{-1})(t)&=\sum_{i, j=1}^{\dm-1}\langle \mathcal{B}' (t)e_i, e_j\rangle \langle  e_j, \mathcal{B}^{-1}(t) e_i\rangle \\
&=\sum_{i, j=1}^{\dm-1}\left( \langle \eta_i'(t), \eta_j(t) \rangle+\langle \eta_i(t), \eta_j'(t) \rangle\right)\langle \mathcal{B}^{-1}(t)e_i, e_j\rangle.
\end{align*}
Using the assumption on the basis $\{e_1,\cdots, e_{\dm-1}\}$, $\langle \mathcal{B}(\tau)e_i, e_j\rangle= \lambda_i \delta_{ij}$ for some eigenvalues $\lambda_i$, $i = 1,\dots, \dm-1$. Also,  $\langle \mathcal{B}(\tau)^{-1} e_i, e_j\rangle\rangle=\lambda_i^{-1}\delta_{ij}$. Then, by evaluating at $t=\tau$, we find
\begin{align*}
\mathrm{tr}(\mathcal{B}'\mathcal{B}^{-1})(\tau)
&=\sum_{i, j=1}^{\dm-1}\left( \langle \eta_i'(\tau), \eta_j(\tau)\rangle+\langle \eta_i(\tau), \eta_j'(\tau)\rangle\right)\lambda_i^{-1} \delta_{ij}\\
&=2\sum_{i=1}^{\dm-1} \frac{\langle \eta_i'(\tau), \eta_i(\tau)\rangle}{\lambda_i}.
\end{align*}

Finally, we use
\[
\langle \eta_i(\tau),\eta_i(\tau)\rangle=\langle \mathcal{B}(\tau)e_i, e_i\rangle=\lambda_i, \qquad i =1, \dots, \dm-1,
\]
to conclude that
\[
 \frac{1}{2}\mathrm{tr}(\mathcal{B}'\mathcal{B}^{-1})(\tau)=\sum_{i=1}^{\dm-1} \frac{\langle \eta_i'(\tau), \eta_i(\tau)\rangle}{\langle \eta_i(\tau), \eta_i(\tau)\rangle}=\sum_{i=1}^{\dm-1} \frac{|\eta_i|'(\tau)}{|\eta_i(\tau)|}\geq (\dm-1)\frac{\psi_M'(\tau)}{\psi_M(\tau)},
\]
where for the last inequality we used \eqref{eqn:YpoY}.

Since $\tau$ was arbitrary, combining the above inequality with \eqref{eqn:detApodetA} we find
\begin{equation}
\label{eqn:l-ineq}
(\dm-1)\frac{\psi_M'}{\psi_M}\leq \frac{(\det\mathcal{A})'}{\det\mathcal{A}}.
\end{equation}
Inequality \eqref{eqn:l-ineq} can be integrated to get the desired conclusion. We explain this last step, together with the right inequality, in Part II.
\medskip


\noindent {\em Part II: Right inequality.} This part also follows closely the proof of the constant bounds case (see \cite[Theorem III.4.3]{Chavel2006}), but with some small extra technicalities. We present a lemma first.

\begin{lemma}
\label{lemma:Psi-ineq}
Let $\Psi$ satisfy the following ODE for some $\epsilon>0$:
\[
\displaystyle\Psi'(t)=(\dm-1)c(t)-\frac{\Psi^2(t)}{\dm-1},\qquad \textrm{ for all } t\geq\epsilon,\quad \Psi(\epsilon)>0,
\]
where $c(t)\geq0$ for all $t\geq \epsilon$. Then,
\[
\Psi(t)>0, \qquad \textrm{ for all } t>\epsilon.
\]
\end{lemma}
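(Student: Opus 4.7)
The plan is to argue by contradiction using the reciprocal substitution, which turns the Riccati-type lower bound $\Psi' \geq -\Psi^2/(\dm-1)$ into an easily integrable linear estimate. Suppose, contrary to the claim, that $\Psi$ fails to be positive throughout $(\epsilon,\infty)$. Since $\Psi$ is continuous (being a solution of an ODE with continuous right-hand side on the region where $\Psi$ stays finite) and $\Psi(\epsilon)>0$, there is a first zero
\[
t^* := \inf\{\,t > \epsilon : \Psi(t) = 0\,\} > \epsilon,
\]
with $\Psi > 0$ on $[\epsilon, t^*)$ and $\Psi(t^*) = 0$.

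On the interval $[\epsilon, t^*)$ the function $u(t) := 1/\Psi(t)$ is well-defined and differentiable, and the ODE gives
\[
u'(t) = -\frac{\Psi'(t)}{\Psi(t)^2} = -\frac{(\dm-1)c(t)}{\Psi(t)^2} + \frac{1}{\dm-1} \leq \frac{1}{\dm-1},
\]
where the inequality uses the hypothesis $c(t)\geq 0$. Integrating from $\epsilon$ to $t\in[\epsilon,t^*)$ yields
\[
u(t) \leq \frac{1}{\Psi(\epsilon)} + \frac{t-\epsilon}{\dm-1}.
\]
The right-hand side stays bounded as $t\to t^{*-}$, but $\Psi(t)\to 0^+$ forces $u(t)\to+\infty$. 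This contradiction shows that the set $\{t>\epsilon:\Psi(t)=0\}$ is empty, and therefore $\Psi(t) > 0$ for all $t>\epsilon$.

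There is no real obstacle in this argument; the reciprocal substitution is tailored to the Riccati structure and reduces the problem to a one-line linear estimate. An alternative phrasing is to compare $\Psi$ directly with the solution $\phi(t) = (\dm-1)\Psi(\epsilon)/((\dm-1) + (t-\epsilon)\Psi(\epsilon))$ of $\phi' = -\phi^2/(\dm-1)$, $\phi(\epsilon)=\Psi(\epsilon)$, which is explicitly positive on $[\epsilon,\infty)$; but invoking a comparison theorem would be heavier than the direct reciprocal argument above, so I would use the latter.
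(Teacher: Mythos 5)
Your proof is correct and uses essentially the same argument as the paper: drop the nonnegative term $(\dm-1)c(t)$, observe that $\frac{\d}{\d t}\bigl(1/\Psi\bigr)\leq \frac{1}{\dm-1}$, and integrate to bound $1/\Psi(t)$ by $\frac{1}{\Psi(\epsilon)}+\frac{t-\epsilon}{\dm-1}$. The only difference is presentational: the paper integrates the reciprocal inequality directly on $[\epsilon,t]$, while you frame it as a contradiction at a first zero $t^*$, which if anything makes explicit the positivity of $\Psi$ needed to justify the substitution.
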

\begin{proof}
We have
\[
\Psi'(t)\geq -\frac{\Psi^2(t)}{\dm-1}, \qquad \text { for } t \geq \epsilon,
\]
which yields
\[
\frac{\d}{\d t}\left(\frac{1}{\Psi(t)}\right)\leq \frac{1}{\dm-1}.
\]

We can solve this differential inequality to get
\[
\frac{1}{\Psi(t)}-\frac{1}{\Psi(\epsilon)}\leq \frac{t - \epsilon}{\dm-1}, \qquad \text { for } t \geq \epsilon.
\]
Hence, we have
\[
\frac{1}{\Psi(t)}\leq \frac{1}{\Psi(\epsilon)}+\frac{t - \epsilon}{\dm-1}, \qquad \text { for all } t \geq \epsilon,
\]
which then implies the conclusion of the lemma.
\end{proof}

We define
\[
\Psi_m:=(\dm-1)\frac{\psi_m'}{\psi_m},\quad \Psi_M:=(\dm-1)\frac{\psi_M'}{\psi_M},\quad \Phi:=\frac{(\det\mathcal{A})'}{\det\mathcal{A}}.
\]
Using \eqref{eqn:psimM} we find that $\Psi_m$ and $\Psi_M$ satisfy the following equations:
\[
\Psi_m'=(\dm-1)c_m-\frac{\Psi_m^2}{\dm-1},\qquad \text{ and } \qquad \Psi_M'=(\dm-1)c_M-\frac{\Psi_M^2}{\dm-1}.
\]
Also, by the initial data in \eqref{eqn:psimM}, we have
\[
\Psi_m(t), \Psi_M(t)\sim \frac{\dm-1}{t}, \qquad \text { as } t\to 0^+.
\]

From Lemma \ref{lemma:Psi-ineq}, we know that both $\Psi_m$ and $\Psi_M$ are positive. From the left inequality (Part I), we have
\[
\Psi_M\leq \Phi,
\]
and this yields that $\Phi$ is also positive. 

By Jacobi's formula, we have
\[
\Phi=\mathrm{tr}\left( \mathcal{A}'\mathcal{A}^{-1} \right).
\]
We denote $\mathcal{U}:=\mathcal{A}'\mathcal{A}^{-1}$. From the argument in the proof of \cite[Theorem III.4.3]{Chavel2006}, it holds that
\[
(\mathrm{tr} \, \mathcal{U})'+\mathrm{tr}\, \mathcal{U}^2+\mathrm{tr}\, \mathcal{R}=0.
\]
Since $\mathcal{K}(x) \geq -c_m(\theta_x)$ and 
\[
\mathrm{tr}\, \mathcal{U}^2\geq\frac{(\mathrm{tr}\, \mathcal{U})^2}{\dm-1}, 
\]
we get
\[
\Phi'\leq (\dm-1)c_m-\frac{\Phi^2}{\dm-1}.
\]

We calculate now the time evolution of the functional $F:=1-\frac{\Phi}{\Psi_m}$:
\begin{align*}
-F'
&=\frac{\Phi'\Psi_m-\Phi\Psi_m'}{\Psi_m^2}\\
&\leq \frac{(\dm-1)c_m(\Psi_m-\Phi)-\frac{\Phi\Psi_m}{\dm-1}(\Phi-\Psi_m)}{\Psi_m^2}\\
&= \frac{(\dm-1)c_m}{\Psi_m}\left(1-\frac{\Phi}{\Psi_m}\right)+\frac{\Phi}{\dm-1}\left(1-\frac{\Phi}{\Psi_m}\right)\\
&=\left(
\frac{(\dm-1)c_m}{\Psi_m}+\frac{\Phi}{\dm-1}
\right)F.
\end{align*}
This yields
\begin{equation}
\label{eqn:F-ineq}
F'\geq -\left(
\frac{(\dm-1)c_m}{\Psi_m}+\frac{\Phi}{\dm-1}
\right)F.
\end{equation}

We also find
\[
\lim_{t\to0+}\frac{\Phi(t)}{\Psi_m(t)}=\lim_{t\to0+}\frac{(\det\mathcal{A})'(t)\psi_m(t)}{(\dm-1)\det\mathcal{A}(t) \psi_m'(t)}=\lim_{t\to0+}\left(\frac{t(\det\mathcal{A})'(t)}{(\dm-1)\det\mathcal{A}(t)}\right)\left(
\frac{\psi_m(t)}{\psi_m'(t)t}
\right)=1.
\]
Hence, we have $\lim_{t\to0+}F(t)=0$, and we can define $F(0)=0$. Let assume that for some $T>0$, $F(T)<0$. Then, we can define $0\leq T_0<T$ which satisfies
\[
T_0=\sup\{t:F(t)=0\}.
\]
Then,
\[
F(t)\leq 0, \qquad \quad\forall~T_0\leq t\leq T,
\]
which by \eqref{eqn:F-ineq}, it yields
\[
F'\geq0, \qquad\forall~T_0\leq t\leq T.
\]
Finally, we get
\[
F(T)-F(T_0)=\int_{T_0}^TF'(t)\d t\geq0.
\]
However, we assumed $F(T_0)=0$ and $F(T)<0$, so we have a contradiction. We conclude that $F(t)\geq0$ for all $t\geq0$, or equivalently,
\[
\Phi(t)\leq \Psi_m(t), \qquad\forall~t>0.
\]
Using the expressions of $\Phi$ and $\Psi$, we then have
\begin{equation}
\label{eqn:r-ineq}
\frac{(\det\mathcal{A})'}{\det\mathcal{A}}\leq (\dm-1)\frac{\psi_m'}{\psi_m}.
\end{equation}

Now, by \eqref{eqn:l-ineq} and \eqref{eqn:r-ineq}, we have both inequalities:
\[
(\dm-1)\frac{\psi_M'}{\psi_M}\leq \frac{(\det \mathcal{A})'}{\det\mathcal{A}}\leq (\dm-1)\frac{\psi_m'}{\psi_m}.
\]
We integrate this double inequality from $\epsilon>0$ to $\theta$ to get
\begin{equation}
\label{eqn:double-ineq}
\psi_M(\theta)^{\dm-1} \left(\frac{\det\mathcal{A}(\epsilon)}{\psi_M(\epsilon)^{\dm-1}}\right)\leq \det\mathcal{A}(\theta)\leq \psi_m(\theta)^{\dm-1} \left(\frac{\det\mathcal{A}(\epsilon)}{\psi_m(\epsilon)^{\dm-1}}\right).
\end{equation}

Since $\mathcal{A}'(0)=I$ and $\mathcal{A}(0)=0$, we have
\[
\lim_{\epsilon\to0+}\frac{\det\mathcal{A}(\epsilon)}{\epsilon^{\dm-1}}=1.
\]
Together with the initial conditions for $\psi_m$ and $\psi_M$ (see \eqref{eqn:psimM}), this further yields
\[
\lim_{\epsilon\to0+}\frac{\det\mathcal{A}(\epsilon)}{\psi_m(\epsilon)^{\dm-1}}=\lim_{\epsilon\to0+}\frac{\det\mathcal{A}(\epsilon)}{\psi_M(\epsilon)^{\dm-1}}=1.
\]
Finally, we let $\epsilon\to0^+$ in \eqref{eqn:double-ineq} to get
\[
\psi_M(\theta)^{\dm-1} \leq \det\mathcal{A}(\theta)\leq \psi_m(\theta)^{\dm-1}.
\]
This concludes the proof of Theorem \ref{lemma:Chavel-thms-gen}.


\section{Proof of Lemma \ref{estpsi}}
\label{appendix:estpsi}
In this appendix, we prove each part of Lemma \ref{estpsi}.
\smallskip

\underline{\em Proof of part I.} In this part, we only assume that $c$ is positive, continuous, and non-decreasing, and show \eqref{estpsires-relaxed}.

Denote
\begin{equation}
\label{eqn:G}
G(\theta)=\frac{\psi(\theta)}{\psi'(\theta)},\qquad \text{ for } \theta \geq 0.
\end{equation}
Then, by direct computations, 
\begin{align}
G'(\theta)&=\frac{\psi'(\theta)\psi'(\theta)-\psi(\theta)\psi''(\theta)}{\psi'(\theta)^2} \nonumber \\[2pt]
&=1-c(\theta) G(\theta)^2.
\label{eqn:Gp}
\end{align}
Note that $G(0)=\frac{\psi(0)}{\psi'(0)}=0$ and $G'(0)=1-c(0)G(0)^2=1$. 

We will show that 
\begin{equation*}
\liminf_{\theta\to \infty}\sqrt{c(\theta)}G(\theta)\geq1.
\end{equation*}
Fix $\bar{\theta}>0$. Since $c$ is non-decreasing, it holds that
\[
G'(\theta)\geq 1-c(\bar{\theta})G(\theta)^2, \qquad\forall0\leq \theta\leq\bar{\theta}.
\]

We consider the following two cases.

\noindent (Case 1) If there exists $s\in [0, \bar{\theta}]$ such that $G(s)=\frac{1}{\sqrt{c(\bar{\theta})}}$, then we have
\[
G(\theta)\geq \frac{1}{\sqrt{c (\bar{\theta})}}, \qquad\forall s \leq \theta\leq \bar{\theta},
\]
since $G'(\theta)\geq 0$ whenever $G(\theta)=\frac{1}{\sqrt{c(\bar{\theta})}}$. In particular,
\[
G(\bar{\theta})\geq \frac{1}{\sqrt{c(\bar{\theta})}}.
\]

\noindent (Case 2) If $G(\theta)<\frac{1}{\sqrt{c(\bar{\theta})}}$ for all $0\leq\theta\leq \bar{\theta}$, then $G'(\theta)>0$ for all $0\leq \theta\leq \bar{\theta}$. Furthermore,
\[
\frac{G'(\theta)}{1-c(\bar{\theta})G(\theta)^2}\geq1, \qquad\forall0\leq \theta \leq\bar{\theta},
\]
and hence,
\[
\frac{\d}{\d\theta}\mathrm{atanh} \Bigl(\sqrt{c(\bar{\theta})}G(\theta)\Bigr)\geq \sqrt{c(\bar{\theta})},\qquad\forall 0\leq \theta\leq \bar{\theta}.
\]
By integrating from $0$ to $\bar{\theta}$ and using $G(0)=0$ we then get
\[
G(\bar{\theta})\geq  \frac{\tanh \left(\sqrt{c(\bar{\theta})}\bar{\theta} \right)}{\sqrt{c(\bar{\theta})}}.
\]

From Case 1 and Case 2, we conclude that
\[
G(\bar{\theta}) \geq \min\left(\frac{1}{\sqrt{c(\bar{\theta})}}, \frac{\tanh \left( \sqrt{c(\bar{\theta})}\bar{\theta} \right)}{\sqrt{c(\bar{\theta})}}\right).
\]
Then, given that $ \min\left(1, \tanh \left(\sqrt{c(\bar{\theta})}\bar{\theta} \right)\right) = \tanh \left(\sqrt{c(\bar{\theta})}\bar{\theta} \right)$, we find
\[
\sqrt{c(\bar{\theta})}G(\bar{\theta})\geq \tanh \left(\sqrt{c(\bar{\theta})}\bar{\theta} \right).
\]

We multiply $\sqrt{c(\bar{\theta})}$ to the above inequality to get
\begin{equation*}
\sqrt{c(\bar{\theta})}G(\bar{\theta})\geq \min\left(1, \tanh \left(\sqrt{c(\bar{\theta})}\bar{\theta} \right)\right)=\tanh \left (\sqrt{c(\bar{\theta})}\bar{\theta} \right).
\end{equation*}
From here, since the choice of $\bar{\theta}$ was arbitrary, we get
\[
\liminf_{\theta\to\infty}\sqrt{c(\theta)}G(\theta)\geq \lim_{\theta\to\infty}\tanh(\sqrt{c(\theta)}\theta)=1.
\]
Finally, use the above and the expression \eqref{eqn:G} for $G$ to infer that for any $\epsilon>0$, there exists $\theta_0>0$ such that
\[
\frac{1}{1+\epsilon}\leq \frac{\sqrt{c(\theta)}\psi(\theta)}{\psi'(\theta)},\qquad \forall\theta\geq\theta_0.
\]
By integration, this implies \eqref{estpsires-relaxed} (also note the properties of $\psi$ from Remark \ref{rmk:psimM}).\\
\medskip

\underline{ \em Proof of part II.}
In this part we further assume that $c$ satisfies \eqref{eqn:c32}, and show \eqref{estpsires}.  We will use the function $G(\cdot)$ defined in \eqref{eqn:G}, and show
\begin{equation}
\label{eqn:claim2}
\limsup_{\theta\to\infty}\sqrt{c(\theta)}G(\theta)\leq 1.
\end{equation}

We consider two cases again.\\

\noindent (Case 1) There exists no $\theta\geq0$ such that $G'(\theta)=0$. In this case, given the expression of $G'$ in \eqref{eqn:Gp}, there is no $\theta\geq0$ such that
\[
G(\theta)=\frac{1}{\sqrt{c(\theta)}}.
\]
Since $G(0)=0$, we infer that $G(\theta)<\frac{1}{\sqrt{c(\theta)}}$ for all $\theta\geq0$. Indeed, by assuming the contrary, one can apply the intermediate value theorem to $G(\theta)-\frac{1}{\sqrt{c(\theta})}$ to get a contradiction. Hence,
\[
\sqrt{c(\theta)}G(\theta)<1,\qquad\forall \theta\geq0,
\] 
which yields \eqref{eqn:claim2}.

\noindent (Case 2) Now assume that there exists $\theta\geq0$ such that $G'(\theta)=0$. We define
\[
\mathcal{I}=\{x\geq0: G(x)=1/\sqrt{c(x)}\},
\]
or equivalently (see \eqref{eqn:Gp}), $\mathcal{I}$ consists in the set of zeros of $G'$. Since $G(\theta)-\frac{1}{\sqrt{c(\theta)}}$ is a continuous function, $\mathcal{I}$ is a closed set. So, if we define 
\[
\theta_1:=\inf\mathcal{I},
\]
then $\theta_1\in \mathcal{I}$. We will show that $\theta_1$ is an isolated point. By simple manipulations, we compute
\begin{align*}
\overline{D} \biggl( \frac{1}{\sqrt{c(\theta)}} \biggr) &=\limsup_{h\to0}\frac{\frac{1}{\sqrt{c(\theta+h)}}-\frac{1}{\sqrt{c(\theta)}}}{h} \\
&=\limsup_{h\to0}\left(-\frac{c(\theta+h)-c(\theta)}{h}\cdot  \frac{1}{\sqrt{c(\theta)}\sqrt{c(\theta+h)}(\sqrt{c(\theta)}+\sqrt{c(\theta+h)})}\right)\\
&=-\liminf_{h\to0}\left(\frac{c(\theta+h)-c(\theta)}{h}\right)\frac{1}{2c(\theta)\sqrt{c(\theta)}}\\
&=-\frac{1}{2 c(\theta)^{3/2}}\underline{D}c(\theta).
\end{align*}

Since $\underline{D}c(\theta)>0$ for all $\theta \geq 0$, we infer that
\[
\left( \overline{D} \frac{1}{\sqrt{c}} \right) (\theta)<0,\qquad \text{ for all } \theta \geq 0.
\]
Let $m_1:=\left(\overline{D}\frac{1}{\sqrt{c}}\right)(\theta_1)<0$. Then, there exists $\epsilon_1>0$ such that 
\[
\frac{1}{\sqrt{c(\theta)}}< \frac{1}{\sqrt{c(\theta_1)}}+\frac{m_1}{2}(\theta-\theta_1), \qquad \forall \theta\in (\theta_1, \theta_1+\epsilon_1).
\]
Since $G'(\theta_1)=0$, we also know that there exists $\epsilon_2>0$ such that
\[
G(\theta)> G(\theta_1)+\frac{m_1}{2}(\theta-\theta_1), \qquad \forall \theta\in (\theta_1, \theta_1+\epsilon_2).
\] 
Hence, if we set $\epsilon=\min(\epsilon_1, \epsilon_2)$, and combine the two inequalities above, we get
\begin{equation}
\label{eqn:G-lb1}
G(\theta) >\frac{1}{\sqrt{c(\theta)}}, \qquad \forall \theta\in(\theta_1, \theta_1+\epsilon),
\end{equation}
where we also used that $G(\theta_1)=\frac{1}{\sqrt{c(\theta_1)}}$. We conclude that $\mathcal{I} \cap (\theta_1, \theta_1+\epsilon) = \emptyset$. By combining this with $\theta_1=\inf\mathcal{I}$ we get
\[
\mathcal{I}\backslash\{\theta_1\}\cap(\theta_1-\epsilon, \theta_1+\epsilon)=\emptyset,
\] 
which implies that $\theta_1$ is an isolated point of $\mathcal{I}$. 

Next, we want to show that $\theta_1$ is the only element of $\mathcal{I}$. If $\mathcal{I}$ contains an element other than $\theta_1$, then $\mathcal{I}\backslash \{\theta_1\}$ is a non-empty closed set. Define
\[
\theta_2:=\inf(\mathcal{I}\backslash\{\theta_1\}),
\]
which satisfies $\theta_2\in\mathcal{I}$ and $\theta_1<\theta_2$. From \eqref{eqn:G-lb1}, we infer that
\begin{equation}
\label{eqn:G-lb2}
G(\theta)>\frac{1}{\sqrt{c(\theta)}}, \qquad \forall \theta\in(\theta_1, \theta_2). 
\end{equation}

Define $m_2:=\left(\overline{D}\frac{1}{\sqrt{c}}\right)(\theta_2)<0$. Then there exists $0<\epsilon_3<\theta_2-\theta_1$ such that
\[
\frac{1}{\sqrt{c(\theta)}}>\frac{1}{\sqrt{c(\theta_2)}}+\frac{m_2}{2}(\theta-\theta_2), \qquad \forall \theta\in (\theta_2-\epsilon_3, \theta_2).
\]
Since $G'(\theta_2)=0$, there exists $0<\epsilon_4<\theta_2-\theta_1$ such that
\[
G(\theta)<G(\theta_2)+\frac{m_2}{2}(\theta-\theta_2), \qquad \forall \theta\in (\theta_2-\epsilon_4, \theta_2).
\]
Now define $\epsilon'=\min(\epsilon_3, \epsilon_4)$ and combine the two inequalities above to get
\[
G(\theta)< \frac{1}{\sqrt{c(\theta)}}, \qquad \forall \theta\in (\theta_2-\epsilon', \theta_2). 
\]
where we also used $G(\theta_2)=\frac{1}{\sqrt{c(\theta_2)}}$. However, this contradicts \eqref{eqn:G-lb2}. We conclude that $\mathcal{I}\backslash \{\theta_1\}$ is an empty set and $\theta_1$ is the only element of $\mathcal{I}$. Also,
\begin{align}
\label{estF}
G(\theta)>\frac{1}{\sqrt{c(\theta)}},\qquad \forall \theta>\theta_1.
\end{align}

Fix $0<\delta<2$ arbitrary. By the assumption \eqref{eqn:c32} on $c(\theta)$, there exists $\tilde{\theta}_1>0$ such that
\begin{equation}
\label{eqn:uD-c32}
\frac{\overline{D}c(\theta)}{c(\theta)^{3/2}}<\delta, \qquad \forall \theta>\tilde{\theta}_1.
\end{equation}
Now, set $\tilde{\theta}=\max(\theta_1, \tilde{\theta}_1)$. From \eqref{eqn:Gp} and \eqref{estF}, we infer
\begin{equation}
\label{eqn:Gp-ub1}
G'(\theta) \leq 1-\sqrt{c(\theta)}G(\theta),\qquad\forall \theta> \tilde{\theta}.
\end{equation}
Also define
\[
H(\theta)=\exp\left(\int_0^\theta \sqrt{c(s)}\d s\right).
\]
By the subadditivity of limit superior, \eqref{eqn:Gp-ub1} and $H'(\theta) = \sqrt{c(\theta)} H(\theta)$, we get 
\begin{align*}
& \overline{D} \left(H(\theta)(\sqrt{c(\theta)}G(\theta)-1) \right)\leq \overline{D}H(\theta) \, \left(\sqrt{c(\theta)}G(\theta)-1 \right)+H(\theta)\frac{\overline{D}c(\theta)}{2\sqrt{c(\theta)}}G(\theta)
+H(\theta)\sqrt{c(\theta)} \,\overline{D}G(\theta) \\[2pt]
&\qquad \leq \sqrt{c(\theta)}H(\theta)(\sqrt{c(\theta)}G(\theta)-1)+H(\theta)\frac{\overline{D}c(\theta)}{2\sqrt{c(\theta)}} G(\theta)+H(\theta)\sqrt{c(\theta)}(1-\sqrt{c(\theta)}G(\theta))\\
& \qquad =H(\theta)G(\theta)\frac{\overline{D}c(\theta)}{2\sqrt{c(\theta)}},
\end{align*}
for $\theta >  \tilde{\theta}$. Then, also using \eqref{eqn:uD-c32}, we derive
\begin{align*}
\overline{D} \left(H(\theta)(\sqrt{c(\theta)}G(\theta)-1) \right) &=H(\theta)G(\theta)c(\theta)\frac{\overline{D}c(\theta)}{2c(\theta)^{3/2}}\\
&\leq \frac{\delta}{2}H(\theta)G(\theta)c(\theta)\\
&= \frac{\delta}{2}H(\theta) \left(\sqrt{c(\theta)} G(\theta)-1 \right)\sqrt{c(\theta)}+\frac{\delta}{2}H(\theta)\sqrt{c(\theta)},
\end{align*}
for $\theta> \tilde{\theta}$.

Multiply the above by $H(\theta)^{-\frac{\delta}{2}}$, and use again $H'(\theta) = \sqrt{c(\theta)} H(\theta)$ to get
\begin{align*}
\overline{D}\left(H(\theta)^{1-\delta/2} \left(\sqrt{c(\theta)}G(\theta)-1 \right)\right) & \leq \frac{\delta}{2}H^{1-\delta/2}(\theta)\sqrt{c(\theta)} \\
&=\frac{\delta}{2-\delta}\left(H^{1-\delta/2}(\theta)\right)',
\end{align*}
for $\theta>\tilde{\theta}$. This implies 
\[
H(\theta)^{1-\delta/2} \left(\sqrt{c(\theta)} G(\theta)-1 \right)-H(\tilde{\theta})^{1-\delta/2} \left(\sqrt{c(\tilde{\theta})} G(\tilde{\theta})-1 \right)\leq \frac{\delta}{2-\delta}\left(H(\theta)^{1-\delta/2}-H(\tilde{\theta})^{1-\delta/2}\right),
\]
for $\theta>\tilde{\theta}$.

Finally, we get
\[
\sqrt{c(\theta)} G(\theta) -1\leq \frac{H(\tilde{\theta})^{1-\delta/2}\left(\sqrt{c(\tilde{\theta})} G(\tilde{\theta})-1\right)}{H(\theta)^{1-\delta/2}}+\frac{\delta}{2-\delta}\left(1-\frac{H(\tilde{\theta})^{1-\delta/2}}{H(\theta)^{1-\delta/2}}\right),
\]
for $\theta> \tilde{\theta}$, and since
\[
\lim_{\theta\to\infty}H(\theta)=\infty,
\]
we further find
\[
\limsup_{\theta\to\infty}\sqrt{c(\theta)} G(\theta) \leq 1+\frac{\delta}{2-\delta}.
\]
Given that $\delta$ is arbitrary, we can now conclude \eqref{eqn:claim2}. From \eqref{eqn:claim2} and the expression \eqref{eqn:G} for $G$, for any $0<\epsilon<1$ there exists $\theta_0>0$ such that
\[
\frac{\sqrt{c(\theta)}\psi(\theta)}{\psi'(\theta)}\leq \frac{1}{1-\epsilon}, \qquad \forall \theta\geq\theta_0.
\]
By integration, this implies \eqref{estpsires}. This concludes the proof of Lemma \ref{estpsi}.


\bibliographystyle{abbrv}
\def\url#1{}
\bibliography{lit.bib}

\end{document}